\theoremstyle{plain}
  \newtheorem{theorem}{Theorem}
   \newtheorem{thm}{Theorem}[section]
  \newtheorem{prop}[thm]{Proposition}
  \newtheorem{proposition}[thm]{Proposition}
  \newtheorem{cor}[thm]{Corollary}
  \newtheorem{lemma}[thm]{Lemma}
\theoremstyle{definition}
  \newtheorem{defn}[thm]{Definition}
  \newtheorem{remark}[thm]{Remark}
\theoremstyle{remark}
\DeclareMathOperator{\Cl}{Cl}
\DeclareMathOperator{\Res}{Res}
\DeclareMathOperator{\rad}{rad}
\DeclareMathOperator{\Hom}{Hom}
\DeclareMathOperator{\Aut}{Aut}
\DeclareMathOperator{\Sym}{Sym}
\renewcommand{\Im}{\frak{Im}}
\renewcommand{\Re}{\frak{Re}}
\DeclareMathOperator{\cusp}{cusp}
\DeclareMathOperator{\Norm}{Nm}
\def\Z{{\mathbb Z}}
\def\T{{\mathcal T}}
\def\CS{{\mathsf S}}
\def\all{{\rm all}}
\def\Gal{{\rm Gal}}
\def\GL{{\rm GL}}
\def\Cl{{\rm Cl}}
\def\Sym{{\rm Sym}}
\def\Aut{{\rm Aut}}
\def\Mass{{\rm Mass}}
\def\Ind{{\rm Ind}}
\def\im{{\rm im}}
\def\coker{{\rm coker}}
\def\un{{\rm un}}
\def\Disc{{\Delta}}
\def\Vol{{\rm Vol}}
\def\Val{{\nu}}
\def\val{{\nu}}
\def\R{{\mathbb R}}
\def\bA{{\mathbb A}}
\def\bR{{\mathbb R}}
\def\bF{{\mathbb F}}
\def\FF{{\mathcal F}}
\def\RR{{\mathcal R}}
\def\I{{\mathcal I}}
\def\Q{{\mathbb Q}}
\def\H{{\rm H}}
\def\Z{{\mathbb Z}}
\def\Q{{\mathbb Q}}
\def\O{{\mathcal O}}
\def\bQ{{\mathbb Q}}
\def\cC{{\mathcal C}}
\def\cD{{\mathcal D}}
\def\cF{{\mathcal F}}
\def\cT{{\mathcal T}}
\def\bF{{\mathbb F}}
\def\bQ{{\mathbb Q}}
\def\cO{{\mathcal O}}
\def\cN{{\mathcal N}}
\def\cS{{\mathcal S}}
\def\fp{{\mathfrak p}}
\newcommand{\rotation}{\sigma}
\newcommand{\reflection}{\tau}
\newcommand{\A}{\mathbb{A}}
\newcommand{\cP}{\mathcal{P}}
\newcommand{\outeraut}{\varphi}
\newcommand{\D}{{\rm d}}
\newcommand{\q}{{\rm q}}
\newcommand{\cA}{{\mathcal A}}
\DeclareMathOperator{\Nm}{Nm}
\definecolor{darkred}{HTML}{CC1F1F}
\definecolor{green}{rgb}{.4,.7,.4}
\definecolor{blue}{rgb}{.2,.6,.75}
\definecolor{pastelyellow}{rgb}{0.992157, 0.552941, 0.235294}
\definecolor{pastelorange}{rgb}{0.941176, 0.231373, 0.12549}
\definecolor{pastelred}{rgb}{0.741176, 0., 0.14902}
\definecolor{darkbrown}{rgb}{0.25098, 0., 0.0745098}
\title{Malle's Conjecture for Galois octic fields over $\bQ$}
\author{Arul Shankar and Ila Varma}
\begin{document}

\maketitle

\begin{abstract}
We compute the asymptotic number of octic number fields whose Galois
groups over $\bQ$ are isomorphic to $D_4$, the symmetries of a square,
when ordering such fields by their absolute discriminants. In
particular, we verify the strong form of Malle's conjecture
\cite{MR2068887} for such octic $D_4$-fields and obtain the constant of
proportionality. Our result answers the question of whether a positive
proportion of Galois octic extensions of $\bQ$ have non-abelian Galois
group in the negative. We further demonstrate that the constant of
proportionality satisfies the Malle--Bhargava principle
\cite{MR2068887,MR2354798} of being a product of local masses, despite
the fact that this principle does {\em not} hold for discriminants of
quartic $D_4$-fields (see \cite{MR1918290,D4preprint}).

This is the first instance of asymptotics being recovered for a
non-concentrated family (in the sense of \cite{ALWWPreprint}) of
number fields of Galois group neither abelian nor
symmetric. Previously, this was only known for abelian fields
\cite{MR969545}, degree-$n$ $S_n$-fields for $n=3,4,5$ (in
\cite{MR491593,MR2183288,MR2745272}), and degree-$6$ $S_3$-fields
\cite{MR2373587}.
\end{abstract}


\section{Introduction}

The purpose of this article is to prove the strong form of Malle's
conjecture \cite{MR2068887} on the count of octic number
fields having Galois group $D_4$ and bounded discriminants, by determining the asymptotic growth of this count. Let
$N_8(G,X)$ denote the number of (isomorphism classes of) normal octic
extensions over $\bQ$ with Galois group isomorphic to $G$. Work of
Wright~\cite{MR969545} on counting abelian fields that predated and inspired
Malle's formulation of his conjectures demonstrated
\begin{align*}
N_8(C_8,X) &\sim c_8X^{1/4}, \\
N_8(C_2 \times C_4, X) &\sim c_{2,4}X^{1/4} \log^2 X, \mbox{ and } \\
N_8(C_2 \times C_2 \times C_2, X) &\sim c_{2,2,2}X^{1/4} \log^6 X,
\end{align*}
where $C_n$ denote the cyclic group of order $n$, and $c_8$, $c_{2,4}$, and $c_{2,2,2}$ are explicit constants.
For the quaternion group $Q_8$, Kluners \cite{habilitation} determined
that $dX^{1/4} \leq N_8(Q_8,X) \leq d'X^{1/4}$ for some positive real
constants $d$ and $d'$, and recent work of Koymans--Pagano \cite{KP}determines exact asymptotics for the count. The remaining and most elusive Galois octic
case occurs when $G = D_4$, the dihedral group of order 8, which is
the subject of our first main result.
  
If $M$ denotes a number field of degree 8 that is normal over $\bQ$
with Galois group $D_4$, we refer to $M$ as an {\em octic
  $D_4$-field}. In this article, we prove the following:

\begin{theorem}\label{main}
Let $N_{8}^{(\mathrm{r})}(D_4,X)$ $($respectively,
$N_{8}^{(\mathrm{c})}(D_4,X))$ denote the number of isomorphism classes
of totally real $($respectively, complex$)$ octic $D_4$-fields with absolute
discriminant bounded by $X$. Then
\begin{align*} N_{8}^{(\mathrm{r})}(D_4,X) &\sim \frac{56+3\sqrt{2}}{256} \cdot \prod_{p\geq 3}\Bigl(1+\frac{3}{p}+\frac{1}{p^{3/2}}\Bigr)\Bigl(1-\frac{1}{p}\Bigr)^3
\cdot X^{1/4} \log^2 X;\\ 
N_{8}^{(\mathrm{c})}(D_4,X) &\sim 
\frac{56+3\sqrt{2}}{128}
\cdot\prod_{p\geq 3}\Bigl(1+\frac{3}{p}+\frac{1}{p^{3/2}}\Bigr)\Bigl(1-\frac{1}{p}\Bigr)^3 \cdot
X^{1/4} \log^2 X.
\end{align*}
\end{theorem}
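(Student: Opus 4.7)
My approach begins with the Galois-theoretic structure of an octic $D_4$-field $M/\bQ$: $D_4$ contains a distinguished cyclic subgroup $C_4=\langle r\rangle$, so $M$ has a canonical quadratic subfield $K_0$ (the fixed field of $\langle r\rangle$) over which $M$ is cyclic quartic. The other two quadratic subfields $K_1,K_2$ sit together with $K_0$ in the biquadratic $V_4$-subfield $F$, and the two conjugacy classes of non-normal order-$2$ subgroups of $D_4$ give rise to two conjugacy classes of non-Galois quartic subfields $L_1\supset K_1$, $L_2\supset K_2$. Decomposing the induced characters $\Ind_{\langle s\rangle}^{D_4}\mathbf 1$ and $\Ind_{\langle rs\rangle}^{D_4}\mathbf 1$ into the five irreducibles of $D_4$ (four one-dimensional, plus the two-dimensional $\rho$) and applying the conductor--discriminant formula yields
$$\disc(L_i)=\disc(K_i)\,\mathfrak f(\rho)\quad(i=1,2),\qquad \disc(M)=\disc(K_0)\,\disc(K_1)\,\disc(K_2)\,\mathfrak f(\rho)^2=\disc(K_0)\,\disc(L_1)\,\disc(L_2).$$

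The plan is to parametrize octic $D_4$-fields $M$ bijectively by pairs $(K_0,\psi)$, where $K_0$ is a quadratic field and $\psi$ is a ray class character of $K_0$ of order exactly $4$ satisfying the anti-invariance $\psi^\sigma=\psi^{-1}$ for $\sigma$ generating $\Gal(K_0/\bQ)$; this last condition is what forces $\Gal(M/\bQ)\cong D_4$ rather than $C_2\times C_4$ or $C_8$, and the Galois closure $M$ is the $C_4$-extension of $K_0$ cut out by $\psi$. The conductor--discriminant formula applied to $M/K_0$ then gives
$$\disc(M)=\disc(K_0)^4\cdot\Nm_{K_0/\bQ}\bigl(\mathfrak f(\psi)\bigr)^2\cdot\Nm_{K_0/\bQ}\bigl(\mathfrak f(\psi^2)\bigr),$$
reducing the count to summing over pairs $(K_0,\psi)$ with $\disc(M)\le X$.

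The central analytic step is, for each fixed $K_0$, to assemble the Dirichlet series
$$\sum_{\psi}\frac{1}{\Nm(\mathfrak f(\psi))^{2s}\,\Nm(\mathfrak f(\psi^2))^s}$$
over anti-invariant order-$4$ characters $\psi$, and factor it as an Euler product indexed by rational primes $p$. The local factor at $p$ is determined by the splitting type of $p$ in $K_0$ (split, inert, ramified) together with an explicit enumeration of the local $\psi_p$ of order dividing $4$ on $(K_0\otimes\bQ_p)^\times$ satisfying local anti-invariance. Summing over quadratic $K_0$ using Wright's count of quadratic fields of each splitting behavior, one obtains a global Dirichlet series whose rightmost singularity is a double pole at $s=1/4$: one $\log X$ factor arises from the sum over $K_0$ and a second from the conductor sum for anti-invariant $\psi$. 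A standard Tauberian theorem then yields asymptotics of shape $c\,X^{1/4}\log^2X$.

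The hardest part is the analysis at the prime $2$, where wild ramification in $K_0$ and in $\psi$ interact non-trivially; the resulting $2$-adic local factor is the source of the unusual $56+3\sqrt 2$ constant and demands a careful enumeration of $2$-adic anti-invariant characters of each conductor. Secondary difficulties are (i) isolating characters of order exactly $4$ from those of order $2$ via inclusion--exclusion, so as to distinguish genuine $D_4$-extensions from $V_4$-quotients, and (ii) imposing the archimedean signature condition correctly: a totally real $M$ requires $K_0$ totally real and both archimedean components of $\psi$ trivial, which is exactly what accounts for the factor of $2$ between $N_8^{(\mathrm{r})}$ and $N_8^{(\mathrm{c})}$ in the theorem. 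Finally, the clean Euler-product form of the leading constant is precisely the Malle--Bhargava principle advertised in the abstract: the failure of this principle at the quartic level in \cite{MR1918290,D4preprint} is bypassed here because the symmetric appearance of all three quadratic subfields $K_0,K_1,K_2$ in $\disc(M)$ restores the global product structure.
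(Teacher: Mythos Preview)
Your parametrization is genuinely different from the paper's. The paper does \emph{not} work with the distinguished quadratic subfield $K_0$ fixed by $\langle r\rangle$ together with order-$4$ anti-invariant characters. Instead it fixes one of the \emph{non}-distinguished quadratic subfields $K=K_1$ and counts quadratic extensions $L/K$ (so order-$2$ characters only), using the relation $|\Delta(M)|=\D(K)^4\cdot\rad(\Nm_{K/\Q}\Delta_K(L))^4$ at odd tame primes. This choice is deliberate: it keeps the character theory quadratic and feeds directly into the Wright machinery, at the cost of introducing the radical, which the paper handles with a two-variable Dirichlet series $\Phi_K(s,t)$.

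There are two substantive gaps in your plan.

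\textbf{The Euler factorization.} The sum $\sum_{\psi}\Nm(\mathfrak f(\psi))^{-2s}\Nm(\mathfrak f(\psi^2))^{-s}$ over anti-invariant order-$4$ characters of $K_0$ does not factor over rational primes without further work: the global constraint (triviality on $K_0^\times$) couples the local data. For order-$2$ characters the paper resolves this by averaging over $\mathcal O_{K,\cS}^\times/(\mathcal O_{K,\cS}^\times)^2$ (Proposition~\ref{propwrightexp}); for order-$4$ characters with the anti-invariance condition the analogous orthogonality step is more intricate, and you have not indicated how to carry it out. Note also that anti-invariant order-$4$ characters on $K_0$ produce both $D_4$- and $Q_8$-extensions (the two nonabelian extensions of $C_2$ by $C_4$ with inversion action); the $Q_8$ contribution is $O(X^{1/4})$ and asymptotically negligible, but your claimed bijection is not literally correct as stated.

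\textbf{The non-concentrated summation.} The decisive difficulty in this problem---emphasized in the paper's introduction---is that the family is \emph{non-concentrated}: every dyadic range of $|\Delta(K_0)|$ contributes comparably to the main term, so one cannot simply form a global Dirichlet series in one variable and invoke a Tauberian theorem. The paper never does this. Instead it (i) proves that for each fixed $K$ the nontrivial characters contribute only a simple pole while the trivial character gives a double pole, and identifies the simple-pole contributions with $V_4$-fields rather than $D_4$-fields (Corollary~\ref{polesofZ}); (ii) estimates the count in $\kappa$-adic boxes in $(\D,\q_{\sigma^2},\q_\tau)$-space with explicit uniformity in $K$ (Theorem~\ref{thMBcount}); (iii) bounds the cuspidal contribution (Proposition~\ref{propcusp}); and (iv) evaluates the sum of double-pole residues over $K$ by relating it to the single-pole residues already computed in \cite{D4preprint} for the conductor ordering (Proposition~\ref{propConstant}). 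Your outline replaces all of this with ``sum over $K_0$ using Wright's count and apply a standard Tauberian theorem,'' which is exactly the step that fails for non-concentrated families and is the reason the paper develops its machinery.
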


The above result concludes the strong form of Malle's conjecture for
Galois octic fields.
While we were working on Theorem \ref{main}, Koymans--Pagano \cite{KP} proved using different techniques that $$N_8(D_4,X) \ll c X^{1/4} \log^4(X)$$
for some (unknown) constant $c > 0$. Previous to either of these works, the best known result was verifying the weak form of Malle's Conjecture (see~\cite{MR1884706}), which was accomplished by Kluners--Malle \cite{MR2076117} in 2004. In particular, they proved that 
	$X^{1/4} \ll N_8(D_4,X) \ll X^{1/4 +\varepsilon}.$  
 
 In conjunction
with \cite{MR969545,habilitation}, both Theorem \ref{main} and \cite{KP} imply that 100\% of Galois octic
fields over $\bQ$ (when ordering such fields by their discriminants)
have Galois group isomorphic to $C_2 \times C_2 \times C_2$. Note that it does seem possible that by keeping track of local conditions during their argument, Koymans--Pagano's methods could achieve the sharp upper and lower bounds we prove here to obtain Malle's conjecture! However the authors of \cite{KP} remark that their techniques do not give an explicit handle for computing the constant of proportionality given in Theorem \ref{main}.

These constants in
Theorem \ref{main} satisfy the Malle-Bhargava principle for the
density of discriminants of number fields of a fixed Galois group, as predicted by recent work of Loughran--Santans \cite{LS}.
This is especially noteworthy given that the constant of proportionality for the
number of {\em quartic} $D_4$-fields, i.e., non-Galois quartic number fields
with degree 8 normal closure over $\bQ$, ordered by discriminant was determined by Cohen-Diaz y Diaz-Olivier \cite{MR1918290}) and it does
{\em not} satisfy the Malle-Bhargava principle (see \S5.3 of
\cite{D4preprint}).

\medskip

If a group $G$ has a transitive embedding into $S_n$, what we mean by an {\em $n$-ic
$G$-number field} is a degree $n$ extension $M$ of $\bQ$ such that the action of the Galois group of the
normal closure $\widetilde{M}$ of $M$ over $\bQ$ 
on the embeddings of $M$ into $\overline{\bQ}$ is isomorphic to
$G$. The {\em Malle-Bhargava principle} relies on the expectation that the
number of (isomorphism classes of) global $n$-ic $G$-number fields of
discriminant $D$ is proportional to the product indexed by $v$ of the ``weighted'' number of local
{\em $G$-compatible extensions} of $\bQ_v$ that are {\em discriminant-compatible}
with $D$, where $v$ ranges over all places $\nu(\bQ)$ of $\bQ$. Using this expectation, Bhargava \cite{MR2354798} conjectures the precise constant in the asymptotics of degree-$n$ $S_n$ fields. 

The Malle-Bhargava principle has
been verified in only a handful of cases. It is true for abelian $G$
with prime exponent due to work of Maki \cite{MR791087, MR1200974} and Wright
\cite{MR969545}.
It has been verified for degree $n$ $S_n$-fields when $n = 3$ by
Davenport-Heilbronn \cite{MR491593}, and $n = 4,5$ by
Bhargava \cite{MR2183288, MR2745272}, as well as for sextic
$S_3$-fields by Bhargava-Wood \cite{MR2373587}. However, the Malle-Bhargava principle does {\em not} hold in general, failing for example in the cases of cyclic fields of degree $p^2$ for any prime $p$; it also does {\em not} hold for
 {\em quartic} $D_4$-fields (proven by Cohen-Diaz y Diaz-Olivier
\cite{MR1918290}).

A more robust form of the Malle--Bhargava principle involves studying subfamilies of the original family constructed by imposing finitely many splitting conditions (see \cite{STCubicInv}).
To illustrate this, we obtain more general asymptotics for any family of octic $D_4$-fields satisfying any finite set of local conditions. For a place $v$ of $\bQ$, we define a {\em
  $D_4$-type} as a conjugacy class of continuous group homomorphisms
$\rho_v:\Gal(\overline{\bQ}_v/\bQ_v) \rightarrow D_4$.  An octic
\'etale algebra $M_v$ over $\bQ_v$ is {\em of $D_4$-type} if there
exists a continuous group homomorphism
$\rho_v:\Gal(\overline{\bQ}_v/\bQ_v) \rightarrow D_4$ such that $$M_v
\cong \Ind_{\im(\rho_v)}^{D_4} \overline{\bQ}_v^{\ker(\rho_v)},$$
i.e., $M_v$ is isomorphic to $\#\coker(\rho_v)$ copies of the fixed
field of $\ker(\rho_v)$ in $\overline{\bQ}_v$. In this case, we
further say that $M_v$ is {\em of $D_4$-type $[\rho_v]$}.

For a finite place $v$ of $\bQ$, a {\em local specification}
$\Sigma_v$ is a set consisting of pairs $(M_v,[\rho_v])$, where $M_v$
is (an isomorphism class of) an octic \'etale algebra of $\bQ_v$ of
$D_4$-type $[\rho_v]$. For a collection $\Sigma=(\Sigma_v)_v$ of local
specifications, we let $N_{8}(\Sigma,X)$ denote the number of
isomorphism classes of octic $D_4$-fields $M$ with absolute
discriminant bounded by $X$ such that $M \otimes \bQ_v \in \Sigma_v$
for all $v \in \Val(\bQ)$.
A collection $\Sigma = (\Sigma_v)_{v \in \Val(\bQ)}$ of local
specifications is said to be \emph{finite} if $\Sigma_p$ consists of
all $D_4$-types for large enough $p$. Then we have the following
result.

\begin{theorem}\label{congruence conditions}
Let $\Sigma$ be a finite collection of local specifications such that
$\Sigma_2$ contains all $D_4$-types. Then
\begin{equation*}
  N_{8}(\Sigma,X) \sim \frac{1}{4}\cdot \sum_{(M_\infty,\rho_\infty) \in
    \Sigma_\infty}\frac{1}{\#\Aut_{D_4}(\rho_\infty)}\cdot \prod_p \, \biggl(\bigl(1-{p}^{-1}\bigr)^3 \cdot
  \hspace{-7pt}\sum_{(M_p,\rho_p) \in
    \Sigma_p}\frac{|\Disc(M_p)^{\frac14}|_p}{\#\Aut_{D_4}(\rho_p)}\biggr)
  \cdot X^{\frac14}\log^2(X^{\frac14}).
\end{equation*}
where for all places $v$, $\Aut_{D_4}(\rho_v)$ is the
centralizer of $\im(\rho_v)$ in $D_4$.
\end{theorem}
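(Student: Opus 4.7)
The plan is to parameterize each Galois octic $D_4$-field $M/\bQ$ by a pair $(K,\chi)$, where $K = M^{C_4}$ is the unique quadratic subfield of $M$ fixed by the rotation subgroup $C_4 \subset D_4$, and $\chi:\Gal(\overline{\bQ}/K)\to C_4$ is a faithful character whose kernel cuts out $M$ over $K$. Conversely, given a quadratic field $K$ and a faithful $\chi$ as above, the associated cyclic quartic extension of $K$ is Galois over $\bQ$ with group $D_4$ (rather than $C_2\times C_4$ or $C_8$) precisely when the nontrivial element $\sigma\in\Gal(K/\bQ)$ acts on $\chi$ by inversion, $\chi\mapsto\chi^{-1}$. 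This parameterization is finite-to-one, with the overcount accounted for by the $\#\Aut_{D_4}(\rho_v)$ factors, and it translates each local specification $\Sigma_v$ into a corresponding condition on the local pair $(K_v,\chi_v)$.

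Applying conductor-discriminant to the five irreducible characters of $D_4$ (the trivial, three one-dimensional characters cutting out the three quadratic subfields $K,K_1,K_2$ of $M$, and the two-dimensional $\rho = \Ind_{G_K}^{G_\bQ}\chi$), combined with Artin inductivity $\fc(\rho) = \disc(K)\cdot\Nm_{K/\bQ}\fc(\chi)$ and the tower identity $\disc(K_1)\disc(K_2) = \disc(K)\cdot\Nm_{K/\bQ}\fc(\chi^2)$ (the latter from viewing the biquadratic $F = M^{Z(D_4)} = KK_1K_2$ as a quadratic extension of $K$ cut out by $\chi^2$), one arrives at the formula
\begin{equation*}
\disc(M) \;=\; \disc(K)^{4}\cdot\Nm_{K/\bQ}\bigl(\fc(\chi)^{2}\cdot\fc(\chi^{2})\bigr).
\end{equation*}
This reduces counting octic $D_4$-fields with $\disc(M) \leq X$ and local conditions $\Sigma$ to a count of characters $\chi$ of $G_K$ with bounded combined conductor, summed over quadratic fields $K$.

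I would encode this count as a Dirichlet series $D(s)=\sum_{M\in\Sigma}\disc(M)^{-s}$. Using class field theory for $K$, the inner sum over $\chi$ admits an Euler product expansion over primes of $K$; the outer sum over $K$ (essentially a sum over quadratic characters of $\bQ$) is itself a Dirichlet series with an Euler product over rational primes. After collecting primes of $K$ above each rational $p$, $D(s)$ becomes an Euler product over rational primes whose $p$-th local factor unfolds into the predicted $(1-p^{-1})^{3}\sum_{(M_p,\rho_p)\in\Sigma_p}|\Disc(M_p)^{1/4}|_p\,/\,\#\Aut_{D_4}(\rho_p)$ at the critical point $s=1/4$. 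One then verifies that $D(s)$ continues meromorphically to a neighborhood of $s=1/4$ with a pole of order exactly three: one from the $\disc(K)^{-4s}$ factor (pole at $4s=1$), and two further poles from the abelian count of $C_4$-characters of $K$ (contributing a $\zeta_K$-like factor specialized at $s=1/4$). A Landau--Selberg--Delange Tauberian theorem then yields the asymptotic $X^{1/4}\log^{2}(X^{1/4})$, with leading constant given by the appropriate residue.

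The principal obstacle is the $2$-adic analysis: $C_4$-characters $\chi$ of $G_K$ can be wildly ramified at primes above $2$, and directly enumerating the corresponding $D_4$-types at $2$ is intricate. The hypothesis that $\Sigma_2$ contains all $D_4$-types is precisely what permits summing freely over this local data and absorbing the full 2-adic mass into the leading constant, bypassing any direct wild-ramification analysis. A secondary difficulty is matching the residue at $s=1/4$ with the product-of-local-masses expression in the theorem, which requires careful bookkeeping of the $\#\Aut_{D_4}(\rho_v)$ normalizations, the overall factor $1/4$ (which reflects the $D_4$-symmetries of the parameterization, e.g. the $\chi \leftrightarrow \chi^{-1}$ redundancy combined with the $\Gal(K/\bQ)$-ambiguity), and the correspondence between the $(1-p^{-1})^{3}$ Euler factors at each prime and the triple pole of $D(s)$.
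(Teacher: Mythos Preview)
Your parameterization via $K=M^{C_4}$ and a faithful $C_4$-character $\chi$ of $G_K$ satisfying $\chi^\sigma=\chi^{-1}$ is legitimate, and the discriminant identity $\disc(M)=\disc(K)^4\cdot\Nm_{K/\bQ}(\fc(\chi)^2\fc(\chi^2))$ is correct. The critical gap is your assertion that after ``collecting primes of $K$ above each rational $p$'' and summing over $K$, the global series $D(s)=\sum_M\disc(M)^{-s}$ becomes an Euler product over rational primes whose local factors are the predicted masses. This is not justified: for each fixed $K$, Wright's method would express the $\chi$-count as a sum over $\cO_{K,\cS}^\times/(\cO_{K,\cS}^\times)^4$ of twisted $L$-functions, and summing those over $K$ does not yield an Euler product. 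In effect you are assuming the Malle--Bhargava principle, which is exactly what the theorem asserts; the paper's introduction explicitly notes this principle \emph{fails} for the closely related family of quartic $D_4$-fields ordered by discriminant, so it cannot be taken for granted here. Your pole-order-three claim and the Tauberian step likewise require analytic continuation and uniform growth bounds that are nowhere supplied.

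The paper's route is genuinely different. It parameterizes $M$ by a quartic $D_4$-subfield $L=M^{\langle\tau\rangle}$ together with its quadratic subfield $K=M^{\langle\tau,\sigma^2\rangle}$ (a \emph{different} quadratic subfield from your $M^{C_4}$), reducing to counting \emph{quadratic} rather than $C_4$-extensions of $K$. The ordering by $\Disc(M)$ then involves a radical-type invariant, which the paper handles by introducing a two-variable Dirichlet series $\cD_\Lambda(s,t;K,\alpha)$ and specializing to the diagonal $s=t$. A central phenomenon absent from your sketch is that the nontrivial summands $\alpha$ (indexed by $\cO_{K,\cS}^\times/(\cO_{K,\cS}^\times)^2$) contribute \emph{simple poles} at $s=1$ rather than being negligible; the paper proves these feed predominantly into $V_4$-fields, not $D_4$-fields. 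The leading constant is then obtained not by reading off a residue of an Euler product but by relating the double-pole residue of $\cD_\Lambda(s,s;K)$ to the single-pole residue of $\cD_\Lambda(s,2s;K)$, averaging over $K$, and invoking the prior conductor-ordering asymptotics from \cite{D4preprint}. Substantial cuspidal and totally-ramified uniformity estimates (your approach would need analogues of these) complete the argument.
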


It is a fundamental open problem in number field asymptotics to understand when
and why this local-global principle for the constant of
proportionality holds and when it fails. Wood \cite{MR2581243} has an illuminating discussion
of the abelian case.
Recent beautiful work of Loughran--Santens \cite{LS} discusses this question in its full generality, see in particular \cite[Remark 1.7]{LS}.
In this work, the authors give a theoretical explanation of when the Malle--Bhargava principle is expected to hold, giving in fact a conjectural leading term for the asymptotics of number fields with any fixed Galois group, ordered by any inertial invariant, and satisfying any fine set of local conditions (see \cite[Conjecture 9.10]{LS}). They predict that the family of octic $D_4$-fields {\em does} satisfy the Malle--Bhargava principle. Theorem \ref{congruence conditions} verifies their conjecture in this case.

\subsection{Background on relevant techniques}

Our proof techniques build on work of Cohen--Diaz y Diaz--Olivier \cite{MR1918290}, who
determine asymptotics for the number of quartic $D_4$-number fields,
ordered by discriminant. They observe that every such $D_4$-field $L$
is simply a quadratic extension of a quadratic field $K$. Moreover,
the discriminant of $L$ can be expressed as
\begin{equation*}
\Delta(L)=\Delta(K)^2\cdot N_{K/\Q}\Delta_K(L).
\end{equation*}
Thus, the number of quartic $D_4$-number fields with discriminant
bounded by $X$ can be expressed as a sum
\begin{equation*}
\sum_{[K:\Q]=2} \#\bigl\{L:[L:K]=2,\,N_{K/\Q}|\Delta_K(L)|<X/(\Delta(K)^2)\bigr\}.
\end{equation*}
For each fixed $K$, the summand in the above equation has
size $\asymp X/(\Delta(K)^{2+o(1)})$ by work of Datskowski--Wright
\cite{MR936994}. We should thus expect that only the quadratic fields $K$ with
``small discriminants'' (i.e., $|\Delta(K)|\ll X^\epsilon$) contribute to
the asymptotics. For these ``small'' quadratic fields, the asymptotic
of the corresponding summand is known (again by
Datskowski--Wright), and adding up the terms over quadratic fields with small discriminants does not introduce any large error terms. To obtain the result then, what is needed is a tail estimate on the number of $D_4$-fields with discriminants less than $X$, whose quadratic subfields $K$ have ``large discriminants'', i.e., $|\Delta(K)|\gg X^\epsilon$. Cohen--Diaz y Diaz--Olivier prove such an estimate and obtain their result.

In fact, the proof technique of \cite{MR1918290} generalizes quite widely.
Consider the question of counting composite extensions $L/K/\Q$, where
we have some control over the asymptotics of the number of fields $K$
(ordered by an invariant ${\rm I}(K)$) as well as control over the asymptotics of
relative extensions $L/K$, ordered by some invariant ${\rm J}_K(L)$, for
each fixed $K$. Suppose we are looking to count the number of such
extensions $L/\Q$, ordered by ${\rm Inv}(L)={\rm J}_K(L)\cdot {\rm I}(K)^\kappa$, where
$\kappa$ is sufficiently large. By sufficiently large, we mean that
$100\%$ of the extensions $L/K/\Q$ should be concentrated on the
``small'' subfields $K$, i.e., fields $K$ for which ${\rm I}(K)\ll
X^\epsilon$. In \cite{ALWWPreprint}, such families of number fields are termed {\it concentrated}. Determining the asymptotics of the number of fields $L$,
ordered by ${\rm Inv}(L)$, can be done in the following two steps: first,
for small subfields $K$, determine the exact count of
relative extensions $L/K$ and sum them up over the small fields, and second, prove a uniform bound on the number of fields $L$ with large subfields
$K$. Such uniform bounds are difficult to obtain in general. However, this
method has been used to great effect, see for example
\cite{MR2904935,MR4047213} and \cite{ALWWPreprint}, in which many cases of the strong form of Malle's conjecture are proved.

Note also that concentrated families of number fields of composite extensions are {\em not} be expected to satisfy the Malle--Bhargava principle. Indeed, we would expect these families to fail even the weaker condition of independence of splitting conditions at different primes. This was proven for the family of $S_3$-fields under many different orderings in \cite{STCubicInv}, and the proof generalizes to other concentrated families of composite extensions for which asymptotics are known without significant change.

In joint work with Altug and Wilson \cite{D4preprint}, we determine
the asymptotics of quartic $D_4$-fields $L$, ordered by the
Artin conductor ${\rm C}$ corresponding to the irreducible $2$-dimensional
representation of $D_4$. In this case, denoting the quadratic subfield
of $L$ by $K$, we have
\begin{equation*}
{\rm C}(L)=\Delta(K)\cdot \Nm_{K/\Q}\Delta_K(L).
\end{equation*}
In particular, the $\kappa$ from above is equal to $1$, and not
sufficiently large. As a consequence, it is no longer true that
$100\%$ of $D_4$-fields, when ordered by conductor, are
supported on quadratic fields with small discriminant. Every dyadic range for the
discriminant of $K$ contributes roughly equal amounts to the count of
$L$'s. The proof in \cite{D4preprint} then requires two new ingredients. First, after
counting quadratic extensions $L/K$ for fields $K$ with $\Delta(K)\leq
X^{1/2}$ (which can be done using the methods of \cite{MR1918290}), the argument relies on
reducing the count of $L/K$ with $\Delta(K)>X^{1/2}$ to the former count
via an algebraic input. Second, to explicitly determine the
asymptotic constants in order to verify the Malle--Bhargava principle in this conductor analogue, 
geometry--of--numbers methods (on the parameter space of quartic
rings) are applied to recover the same counts.

\subsection{Method of proof}

In this paper, we handle families of octic $D_4$-fields $M$ via
studying corresponding families of quadratic extensions $L$ of
quadratic fields $K$. To describe how the discriminant of $M$ relates
to the invariants of $L$ and $K$, we consider a slightly simplified
situation.
Let $\FF$ denote the set of Galois octic $D_4$-fields
$M$, such that $2$ does not ramify in $M$ and no rational prime $p$ totally
ramifies in $M$, i.e., $p\cO_M \neq \cP^8$ for any rational prime $p$ and any prime $\cP$ of $\cO_M$. For $M \in \cF$, denote a quartic
$D_4$-subfield of $M$ by $L$, and the quadratic subfield of $L$ by
$K$. Then, up to some bounded powers of $2$, we have
\begin{equation*}
\Delta(M)=\Delta(K)^4\cdot \rad(\Nm_{K/\Q}\Delta_K(L))^4,
\end{equation*}
where $\rad(n)$ denotes the radical of an integer $n$ or equivalently,
the product of primes dividing $n$. The presence of the radical
complicates our situation in multiple ways. First, since this is not a
polynomial invariant of the coefficients of the coregular space
$\Z^2\otimes\Sym^3(\Z^2)$ of quartic rings, geometry--of--number
methods, which rely on using an invariant whose definition can be extended to the real space $\R^2 \otimes \Sym^3(\R^2)$, are not available. Instead, for quadratic fields
$K$, we work with the Dirichlet series
\begin{equation*}
\Phi_K(s)=\sum_{[L:K]=2}\frac{1}{\Nm_{K/\Q}\Delta_K(L)^s}.
\end{equation*}
To count quadratic extensions $L$ of $K$ ordered by
$\rad(\Nm_{K/\Q}\Delta_K(L))$, we modify $\Phi_K(s)$ into a double
Dirichlet series: the Euler factor at $p$ of one of the ratios of
$L$-functions is of the form $1+c/p^s+d/p^{2s}$. We modify each such
Euler factor into $1+c/p^s+d/p^t$ yielding a double Dirichlet series
$\Phi_K(s,t)$.  Then the function $\Phi_K(s,2s)$ counts quadratic
extensions $L$ of $K$ ordered by $\Nm_{K/\Q}\Delta_K(L)$, while
$\Phi_K(s,s)$ counts quadratic extensions $L$ of $K$ ordered by
$\rad(\Nm_{K/\Q}\Delta_K(L))$.

The Dirichlet series $\Phi_K(s,2s)$ is a sum (of length $\asymp
\Cl(K)[2]$) of ratios of Hecke $L$-functions corresponding to
$K$. Similarly, $\Phi_K(s,s)$ can be expressed as a sum
of functions whose analytic properties are well understood. However,
two crucial difficulties arise when considering $\Phi_K(s,s)$ instead of $\Phi_K(s,2s)$:
\begin{itemize}
\item[{\rm (a)}] First: $V_4$-number fields
  $L/\Q$ contribute a positive proportion to this count, while 
  their numbers were easily seen to be negligible in the previous two orderings (quartic discriminant and Artin conductor).
\item[{\rm (b)}] Second: each nontrivial summand of $\Phi_K(s,s)$ has a simple pole at
  $s=1$, while the summand corresponding to the trivial character has
  a double pole at $s=1$. Previously, the summand of $\Phi_K(s,2s)$
  corresponding to the trivial character had a single pole at $s=1$,
  while all other summands were analytic to the right of $\Re(s)=1/2$.
\end{itemize}

Item (b) above introduces a very new phenomena to our problem: Note that
the difference between a double pole and a single pole corresponds to merely a
factor of $(\log X)$ in the asymptotics, while the difference between
a single pole and no pole corresponds to a postive power of $X$. As a consequence,
the ($\asymp\#\Cl(K)[2]$) summands to $\Phi_K(s,s)$ corresponding to
non-trivial Hecke characters contribute nontrivially to the main term
in the count of quartic fields $L/K$, ordered by $\Delta(K)\cdot
\rad(\Nm_{K/\Q}\Delta_K(L))$.

Surprisingly, these two difficulties can be simultaneously
resolved! We prove that the nontrivial characters contribute
predominantly to the count of $V_4$-fields $L$, with only a negligible
contribution to the count of $D_4$-fields. Furthermore, we prove that
the trivial character contributes predominantly to the count of
$D_4$-number fields. This situation is intriguingly parallel to that in Bhargava's landmark work \cite{MR2183288} determining asymptotics for the number of quartic $S_4$-number fields, in which he developed geometry-of-numbers methods and faced two major difficulties. First, $D_4$-fields constitute a positive proportion to the count of all quartic fields, and their contribution needs to be separated from that of $S_4$-fields. Second, the fundamental domain containing the lattice points parametrizing quartic rings had complicated cuspidal regions, and their contributions needed to be bounded or otherwise excluded from the main count. To resolve these (also simultaneously), Bhargava proved that $100\%$ of the $D_4$-fields are contained in the cuspidal regions and also that the points in the cuspidal regions do not contribute to the $S_4$-field count!

Therefore, to obtain asymptotics for
 $D_4$-fields, ordered by the discriminant of their Galois
closures, it suffices to study the asymptotics of the counting
functions arising only from the trivial Hecke characters of quadratic
fields. In this way, we are able to express the asymptotics of the
number of $D_4$-fields, ordered by discriminant, in terms of a sum
over $K$ of the double pole residue of $\Phi_K(s,s)$. Once this is done, we need a new technique to obtain asymptotics for this sum of residues. In our previous work counting $D_4$-fields by conductor, we had relied upon the parameter space of $D_4$-rings, and used geometry-of-numbers techniques to determine the asymptotic constant. In our case, this method is not available. Instead, we
relate the double pole residue of $\Phi_K(s,2s)$ to the single pole residue of
$\Phi_K(s,2s)$, use the results from~\cite{D4preprint} to evaluate the sum of these single pole residues over $K$, and thereby obtain our asymptotic constant.

\medskip

This paper is organized as follows: We begin by discussing families of
octic $D_4$-fields, of quartic $D_4$-fields, and of quadratic
extensions of quadratic fields in \S2. In \S3, we fix a quadratic
field $K$ and follow Wright \cite{MR969545} to express the counting
function of quadratic extensions of $K$ in terms of a sum involving
Hecke $L$-functions of $K$. We then construct the appropriate double
Dirichlet functions associated to these counting functions in \S4, and
study their analytic properties. Next, in \S5, we obtain  asymptotics for the number of quartic $D_4$-fields, ordered by a convenient height (which is related to the discriminant of the Galois closures of these quartic fields). Finally,
in \S6, we prove Theorems \ref{main} and \ref{congruence conditions}.

\subsection*{Acknowledgments}
\noindent Both authors were supported by NSERC discovery grants and Sloan fellowships. Shankar was also supported
by a Simons Fellowship. It is our pleasure to thank Brandon Alberts, S.~Ali Altu\u{g}, Manjul Bhargava, Dorian Goldfeld, Fatemehzahra Janbazi, Kiran Kedlaya, Robert Lemke Oliver, Daniel Loughran, Andres Macedo, Tim Santens, Jean-Pierre Serre, Alexander Slamen, Kevin H.~Wilson, and Melanie Matchett Wood for several useful discussions and helpful comments on a previous draft.

\section{Preliminaries}

Recall that $D_4$ denotes the order-8 group of symmetries of a
square. We let $\rotation$ denote a $90^\circ$-rotation
and $\reflection$ denote a reflection so that
\begin{equation*} D_4 =
\Bigl\langle \rotation ,\ \reflection \ \mid \ \rotation^4 =
1,\ \reflection^2 = 1, \ \reflection\rotation\reflection =
\rotation^3 \Bigr\rangle. \end{equation*} 
The group $D_4$ can also be interpreted as the wreath product of $C_2$
with itself, i.e.
	$$D_4 \cong C_2 \wr C_2 \cong V_4 \rtimes C_2,$$ where $C_2$
acts on $V_4 \cong C_2 \times C_2$ by permuting coordinates.

The group $D_4$ has a nontrivial outer automorphism $\outeraut$, which
can be described explicitly as follows:
\begin{equation*}
\outeraut(\rotation) = \rotation \quad \mbox{ and } \quad
\outeraut(\reflection) = \rotation\reflection.
\end{equation*}
Due to the existence of $\outeraut$, any octic $D_4$-field $M$
contains two conjugate pairs of non-isomorphic quartic $D_4$-fields.
If $M$ is an octic $D_4$-field, we will denote by $L=L_M$ the
(quartic) subfield fixed by the reflection subgroup $\langle
\reflection \rangle$. In addition, we denote the (quadratic) subfield
of $M$ fixed by $\langle \tau, \sigma^2 \rangle$ by $K=K_M$. With these
choices, $L$ is a quartic field whose normal closure over $\bQ$ is
equal to $M$, and $K$ is the quadratic subfield of $L$.

We say that $L$ is a {\em quartic $D_4$-field} if its normal closure
over $\bQ$ is an octic field $M=M_L$ with Galois group over $\bQ$ equal
to $D_4$. There is a quartic $D_4$-field contained in $M$ that is
not isomorphic to $L$. We denote (the isomorphism class of) this field
by $L'=\varphi_4(L)$, and its quadratic subfield by $K'=\varphi_2(L)$.

\subsection{Invariants of quadratic extensions of quadratic fields}

Let $K$ be a quadratic field and let $L$ be a quadratic extension of
$K$. In particular,
we employ the two fundamental invariants associated to $(L,K)$ in \cite{D4preprint}. First, let
\begin{equation*}
\q(L,K) := \Nm_{K/\bQ}(\Disc(L/K)) = \frac{|\Disc(L)|}{\Disc(K)^2}, \quad
\mbox{ and } \quad \D(L,K) := |\Disc(K)|,
\end{equation*}
where for any number field $F$, we let $\Disc(F)$ denote the discriminant of $F$. Additionally, for any subset $S$ of places in $\nu(\bQ)$ containing the infinite place, let $\Disc^S(F)$ denote the
prime-to-$S$-part of $\Disc(F)$. In general, for an invariant ${\rm I}
\in F$, and a set of places $S \subset \nu(F)$ containing the infinite
place of $F$, we let ${\rm I}^S$ denote the prime-to-$S$-part of
${\rm I}$.

Using splitting types, we can define more refined invariants which
allow us to decompose $\q(L,K)$ by conjugacy class of inertia when $L$ is a quartic $D_4$-field.
\begin{defn}\label{splittingtype}
If $F$ is a number field, then the {\em splitting type}
$\varsigma_p(F)$ at $p$ of $F$ satisfies
\begin{equation*}
  \varsigma_p(F) = (f_1^{e_1}f_2^{e_2} \hdots) \quad \Leftrightarrow \quad \cO_F/p\cO_F
  \cong \bF_{p^{f_1}}[t_1]/(t_1^{e_1}) \oplus \bF_{p^{f_2}}[t_2]/(t^{e_2}) \oplus \hdots
\end{equation*}
Similarly, if $R$ is a ring, then the {\em splitting type}
$\varsigma_p(R)$ at $p$ is equal to $(f_1^{e_1}f_2^{e_2} \hdots)$ if
and only if $$R/pR \cong \bF_{p^{f_1}}[t_1]/(t_1^{e_1}) \oplus
\bF_{p^{f_2}}[t_2]/(t^{e_2}) \oplus \hdots$$
\end{defn}

\begin{defn}\label{def:qs}
Let $K$ denote a quadratic field and $L$ denote a quadratic extension
of $K$. If $S \subset \Val(\bQ)$ denotes a finite set of rational
places containing $2$ and $\infty$, define
\begin{equation*}
\q^S_\sigma(L,K) := \prod_{\substack{ p \notin S \text{ s.t. } \\ \varsigma_p(L)=(1^4)}}
p, \qquad \qquad \q^S_{\sigma^2}(L,K) := \hspace{-10pt}
\prod_{\substack{ p \notin S \text{ s.t. } \\ \varsigma_p(L) \in \{(1^21^2),(2^2)\} \\ \varsigma_p(K) \in \{(11),(2)\}}} \hspace{-10pt}p,
\qquad \qquad \q^S_\tau(L,K) := \hspace{-10pt}\prod_{\substack{ p \notin S \text{ s.t } \\ \varsigma_p(L) \in \{(1^211),(1^22)\}}} \hspace{-10pt} p.
\end{equation*}
Finally, let $\D^S(L,K)$ denote the prime-to-$S$-part of $\D(L,K)$.
\end{defn}
When $L$ is a quartic $D_4$-field that is taken (without loss of generality) to be the fixed field of $\langle \tau \rangle$ inside its normal closure over $\bQ$ and $K$ denotes the quadratic subfield of $L$, we note that an odd prime $p \mid \q_{ g }^S(L,K)$ iff $I_p$ is conjugate to $\langle g \rangle$, where $g = \sigma, \sigma^2,$ or $\tau \in D_4$ and $p \notin S$.

\begin{defn}\label{sheight}
Define the $S$-{\it height} $\H=\H^S$ on such pairs $(L,K)$ of a quadratic
extension $L$ of a quadratic field $K$ to be
\begin{equation*}
\H^S(L,K):=\q_\sigma^{S}(L,K)^{1/2} \cdot \q^S_{\sigma^2}(L,K) \cdot \q^S_{\tau}(L,K) \cdot\D^{S}(L,K).
\end{equation*}
\end{defn}

When $L$ is a quartic $D_4$-field, we relate its $S$-height to the prime-to-$S$ part of the discriminants of $L$ and its normal closure over $\bQ$. 

\begin{lemma}\label{lem:Deltas}
Let $L$ is an quartic $D_4$-field with quadratic subfield $K$
and $M$ denotes the normal closure of $L$ over $\bQ$. If $S$ denotes a finite set of rational places containing $2$ and $\infty$, we have:
\begin{eqnarray*}
\Disc^S(M) \ &=& \H^S(L,K)^4 \\&=& \q^S_\sigma(L,K)^2\cdot \q^S_{\sigma^2}(L,K)^4 \cdot \q^S_\tau(L,K)^4 \cdot \D^S(L,K)^4 \\
\Disc^S(L) \  &=& \q^S_\sigma(L,K)^{\phantom 2} \cdot \q^S_{\sigma^2}(L,K)^2 \cdot \q^S_\tau(L,K)^{\phantom 2} \cdot \D^S(L,K)^2 
\end{eqnarray*}
\end{lemma}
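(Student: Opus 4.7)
The plan is to prove both identities by a case analysis on the inertia subgroup $I_p$ at each rational prime $p \notin S$. Since $S$ contains $2$ and $\infty$, every such $p$ is odd, and in particular $p \nmid |D_4| = 8$, so all ramification at $p$ is tame. The tame formula gives $v_p(\Disc(F/\bQ)) = \sum_{\mathfrak{p}\mid p} f_\mathfrak{p}(e_\mathfrak{p}-1)$ for $F \in \{L,M\}$, and for the Galois closure this simplifies to $v_p(\Disc(M)) = (8/|I_p|)(|I_p|-1)$. The second displayed equality in the lemma is the algebraic expansion of $(\H^S)^4$ via the definition of $\H^S$, so the real content lies in the case-by-case match between these local computations and the right-hand side.

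I would first enumerate the cyclic subgroups of $D_4$ up to conjugacy: $\{1\}$, $\langle\sigma^2\rangle$, $\langle\sigma\rangle$, $\langle\tau\rangle$, and $\langle\sigma\tau\rangle$. For each candidate $I_p$, two pieces of data determine the right-hand side at $p$: first, whether $I_p \subseteq V_4 = \langle\sigma^2,\tau\rangle$ (the subgroup fixing $K$), which is equivalent to $K$ being unramified at $p$ and hence controls whether $p \mid \D^S$; and second, the splitting type $\varsigma_p(L)$, obtained by analyzing the $D_p$-action on the coset space $D_4/\langle\tau\rangle$, with $e_\mathfrak{p}$ equal to the cardinality of the $I_p$-orbit of a representative. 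From $\varsigma_p(L)$ together with $\varsigma_p(K)$ one reads off which of $\q^S_\sigma$, $\q^S_{\sigma^2}$, $\q^S_\tau$ the prime $p$ divides.

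The easy cases are handled by short orbit computations on $D_4/\langle\tau\rangle$. For $I_p \sim \langle\sigma^2\rangle$ one finds $v_p(\Disc(L)) = 2$ and $v_p(\Disc(M)) = 4$, with $K$ unramified and $p \mid \q^S_{\sigma^2}$. For $I_p \sim \langle\tau\rangle$, one gets $v_p(\Disc(L)) = 1$ and $v_p(\Disc(M)) = 4$, with $K$ unramified and $p \mid \q^S_\tau$. For $I_p \sim \langle\sigma\tau\rangle$, the element lies outside $V_4$ so $K$ ramifies with $\varsigma_p(K) = (1^2)$; the $\varsigma_p(K)$ condition in the definition of $\q^S_{\sigma^2}$ then excludes such $p$ from every $\q^S$ factor, and the contribution comes entirely from $\D^S$ with exponents $2$ and $4$, matching the direct tame computation $v_p(\Disc(L)) = 2$, $v_p(\Disc(M)) = 4$.

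The subtle case, which I expect to require the crucial observation, is $I_p \sim \langle\sigma\rangle$ of order $4$. Here the $\sigma$-action on $D_4/\langle\tau\rangle$ is a single $4$-cycle, so $\varsigma_p(L) = (1^4)$ and the tame formula yields $v_p(\Disc(L)) = 3$ and $v_p(\Disc(M)) = 6$. At the same time $\sigma \notin V_4$, so $K$ also ramifies and $p \mid \D^S$. Such a prime therefore contributes simultaneously to two factors on the right-hand side, and one must verify that the exponents add correctly: $v_p(\q^S_\sigma) + 2v_p(\D^S) = 1 + 2 = 3$ matches $v_p(\Disc^S(L))$, and $2v_p(\q^S_\sigma) + 4v_p(\D^S) = 2 + 4 = 6$ matches $v_p(\Disc^S(M))$. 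Assembling all cases over $p \notin S$ then yields both identities.
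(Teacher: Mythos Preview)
Your proposal is correct and follows essentially the same approach as the paper: a case-by-case analysis over the conjugacy classes of cyclic inertia subgroups $I_p\subset D_4$ at each odd prime $p\notin S$, computing the tame discriminant exponents for $L$ and $M$ and matching them against the factors $\q^S_\sigma$, $\q^S_{\sigma^2}$, $\q^S_\tau$, $\D^S$. The paper simply records the outcome of your computations in a table (with rows indexed by $I_p$ and columns $\Disc(M)$, $\Disc(L)$, $\Disc(K)$, $\Nm_K(\Disc(L/K))$) rather than writing out the orbit analysis on $D_4/\langle\tau\rangle$, but the content is identical.
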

\begin{proof}
  For an odd prime $p$, the $p$-power contribution to the
  discriminants of $M$, $L$, and the quadratic subfield of $L$, which
  we will denote by $K$, as well as to $\Nm_{K/\bQ}(\Disc(L/K))$ is
  determined entirely by the conjugacy class of the inertia group
  $I_p$ of $p$ in $D_4$. Because every odd prime is tame, $I_p$ must
  be a cyclic group; thus, the options (presented in their conjugacy classes) are $\{\langle 1 \rangle \}$,
  $\{\langle \tau \rangle,\langle\sigma^2 \tau\rangle\}$, $\{\langle
  \sigma \tau \rangle, \langle\sigma^3 \tau \rangle\}$,$\{\langle
  \sigma^2\rangle\}$, and $\{\langle \sigma \rangle\}$.

In the table below, we compute the $p$-divisibility of the
discriminants depending on the possible inertia groups within
$D_4$. Here, we assume (without loss of generality) that $L$ is fixed
by the reflection group $\langle \reflection \rangle$.
\begin{table}[h!!!!!!]\label{inertiatable}
\begin{center}
\begin{tabular}{|c||c|c|c|c|}
\hline &&&& \\[-10pt]
 $I_p$ &  $\Disc(M)$ &$\Disc(L)$ & $\Disc(K)$ &$\Nm_K(\Disc(L/K))$ \\ \hline &&&&\\[-12pt]
  		\hline &&&&\\[-10pt] 
$\langle1\rangle$ & $1$ & $1$ & $1$ & $1$ \\[-12pt] &&&&\\
\hline &&&& \\[-10pt]
$\langle \tau \rangle$, $\langle \sigma^2 \tau \rangle$ & $p^4$ & $p$ & $1$ & $p$ \\[2pt]
$\langle \sigma\tau \rangle$, $\langle \sigma^3 \tau \rangle$ & $p^4$ & $p^2$ & $p$ & $1$\\[-12pt] &&&&\\
\hline&&&&\\[-10pt]
$\langle \sigma \rangle$ & $p^6$ & $p^3$ & $p$ & $p$\\[2pt]
$\langle \sigma^2 \rangle$ & $p^4$ & $p^2$ & $1$ & $p^2$\\[-12pt] &&&&\\
\hline

\end{tabular}
\caption{$p$-power contribution to the invariants for an odd prime $p$.}
\end{center}\vspace{-15pt}
\end{table}

\noindent The proposition follows. 
\end{proof}

\subsection{Families of quadratic extensions of quadratic fields}

We will impose splitting conditions on quartic fields with quadratic subfields via
collections of local specifications, defined below.

\begin{defn}\label{localspec}
For each place $v$ of $\Q$, we use $\Lambda_v$ to denote a nonempty set of pairs
$(L_v,K_v)$, where $K_v$ is (an isomorphism class of) a quadratic \'etale extension of $\Q_v$
and $L_v$ is (an isomorphism class of) a quadratic \'etale extension of $K_v$. We refer to such $\Lambda_v$ as a set of {\em local specifications at $v$}. 

Let $p$ be a prime. 
\begin{itemize}
    \item A pair $(L_p,K_p)$ is said to be {\it totally ramified} if both the
extensions $K_p/\Q_p$ and $L_p/K_p$ are ramified.
    \item For a prime $p$, let $\Lambda_p^{\rm ntr}$ denote the set of all pairs $(L_p,K_p)$ that are not totally ramified.
    \item A collection of local specifications
$\Lambda=(\Lambda_p)_{p \in \nu(\bQ)}$ is said to be {\it acceptable} if for all
large enough $p$, the set $\Lambda_p$ contains every pair
$(L_p,K_p)$ which is not totally ramified, i.e., $\Lambda_p^{\rm ntr} \subseteq \Lambda_p$ for large enough $p$. 
    \item  A collection $\Lambda = (\Lambda_p)_{p \in \nu(\bQ)}$ is
said to have {\it finite total ramification} if for all large enough
$p$ the set $\Lambda_p$ has no pair $(L_p,K_p)$ which is totally
ramified.
\end{itemize}
Thus, if $\Lambda$ is an acceptable collection of local specifications with finite total ramification, then $\Lambda_p = \Lambda_p^{\rm ntr}$  for all $p$ large enough. Additionally: 
\begin{enumerate}
\item[\rm(a)]  Let $\cF_4(\Lambda)$ denote the set of pairs $(L, K)$
  consisting of (an isomorphism class of) a quadratic extension $L$ of
  a quadratic field $K$ over $\Q$ such that for every place $v \in \nu(\Q)$, we have
  $$(L_v,K_v):=(L \otimes_{\bQ} \bQ_v, K\otimes_{\bQ} \bQ_v) \in
  \Lambda_v.$$
\item[{\rm (b)}] Let $\FF_2(\Lambda)$ denote the set of quadratic
  extensions $K/\bQ$ such that there exists a quadratic extension $L$
  of $K$ such that $(L,K) \in \cF_4(\Lambda)$.
\item[{\rm (c)}] For a quadratic field $K \in \FF_2(\Lambda)$, let $\FF_4(\Lambda,K)$ be
  the set of $L$ such that $(L,K)\in\FF_4(\Lambda)$.
  \item[{\rm (d)}]   Let $S(\Lambda)\subset\Val(\bQ)$ 
consist of $2$, $\infty$, and all the odd primes $p$ such that
$\Lambda_p \neq \Lambda_p^{\rm ntr}$. (Note that since $\Lambda$ is acceptable with finite total
ramification, the set $S(\Lambda)$ is finite, and that the complement of $S(\Lambda)$ in the set of places of $\Q$ consisting of every odd prime $p$ such that $\Lambda_p = \Lambda_p^{\rm ntr}$. ) 

  \item[{\rm (e)}] For 
$K \in \FF_2(\Lambda)$, let $\cS_{K}(\Lambda)$ denote the valuations of $K$ above the
places in $S(\Lambda)$, and let
$\cS =\cS_K^{\rm ram}(\Lambda)$ denote the union of the set $\cS_K(\Lambda)$ and the set of
ramified places in $K$.
\item[{\rm (f)}] For $K\in\cF_2(\Lambda)$, let $\FF_4^\un(\Lambda,K)
  \subseteq \cF_4(\Lambda,K)$ be the subset of quadratic extensions
  of $K$ that are unramified away from $\cS_K(\Lambda)$.
\end{enumerate}
Finally, we call $(L,K) \in \FF_4(\Lambda)$ a {\em $D_4$-pair} if and only if $L$ is a quartic $D_4$-field. 
\end{defn}
Note that when $\Lambda$ is an acceptable collection with finite total ramification, $$\q_{\sigma}^{S(\Lambda)}(L,K) = 1$$ for every $(L,K) \in \FF_4(\Lambda)$ since $p \notin S(\Lambda)$ implies $p \nmid \q_{\sigma}(L,K)$.

\section{Quadratic extensions of a fixed quadratic field}

Fix an acceptable collection $\Lambda = (\Lambda_v)_{v\in \nu(\bQ)}$ of local
specifications with finite total ramification $\Lambda$ as in Definition \ref{localspec}, and let $K$ be a quadratic
field belonging to $\FF_2(\Lambda)$.
\begin{defn} We define the Dirichlet series
$\Phi_{\Lambda}(s;K)$ to be the counting function associated to the
set of $L$ in $\FF_4(\Lambda,K)$ ordered by
the prime-to-$S(\Lambda)$ norms of
their relative discriminants $\Disc(L/K)$:
\begin{equation}\label{Phi}
\begin{array}{rcl}
\Phi_{\Lambda}(s;K)&:=&\displaystyle\sum_{L
  \in\FF_4(\Lambda,K)}\Norm_{K/\bQ}^{S({\Lambda})}
(\Disc(L/K))^{-s}.
\end{array}
\end{equation}
\end{defn}
In this section, we obtain an explicit formula for
$\Phi_{\Lambda}(s;K)$ in terms of incomplete Hecke $L$-functions of
$K$ whose conductors are supported on primes in $S(\Lambda)$.
In order to state the result precisely, we will need to develop a bit more notation. 

  For each place $v$ of a quadratic field $K$, let
$K_v$ denote the completion of $K$ with respect to $v$, and when $v$
is finite, let $\cO_{K,v}^\times$ denote the ring of integers of
$K_v$. For an odd finite place $v$ of $K$, let $\chi_{K,v}$ denote the
nontrivial quadratic character on $\cO_{K,v}^\times$. A quadratic extension of $K$ unramified away
from a set of primes $\CS \subset \nu(K)$ is of the form $K(\sqrt{\alpha})$ for
$\alpha\in\O_{K,\CS}^\times$, the group of $\CS$-units of $K$. (Recall
that $\O_{K,\CS}^\times$ denotes the elements $x \in K$ such that the
fractional ideal $(x)$ is a product of nonzero powers of prime ideals
associated to the places in $\CS$.) For
$\alpha\in\O_{K,\CS}^\times$, we set
\begin{equation}\label{eqLKLambdaDef}
\begin{array}{rcl}
\zeta_{K,\CS}(s)&:=&\displaystyle
\prod_{\substack{v\in\val(K)\\v\not\in \CS}}
\Bigl(1-\frac{1}{\Nm_{K_v}(v)^s}\Bigr)^{-1};\\[.2in]
L_{K,\CS}(s;\alpha)&:=&\displaystyle
\prod_{\substack{v\in\val(K)\\v\not\in \CS}}
\Bigl(1-\frac{\chi_{K,v}(\alpha)}{\Nm_{K_v}(v)^s}\Bigr)^{-1}.
\end{array}
\end{equation}
We then prove the following result.
\begin{thm}\label{thmsec2main}
Let $\Lambda$ be an acceptable collection with finite total ramification as in Definition \ref{localspec} and recall the notation above in \eqref{eqLKLambdaDef}. We then have
\begin{eqnarray}\label{eqScountser}
  \Phi_{\Lambda}(s;K) &=& -1+B_\Lambda(K)
  \frac{\zeta_{K,\cS}(s)}{\zeta_{K,\cS}(2s)} +
  \sum_{K(\sqrt{\alpha})\in\FF^\un_{4}(\Lambda, K)}\hspace{-10pt} B_\Lambda(K,\alpha)
  \frac{L_{K,\cS}(s;\alpha)}{L_{K,\cS}(2s;\alpha)},
\end{eqnarray}
where $B_\Lambda(K)$ and $B_\Lambda(K,\alpha)$ are constants that
depend only on the splitting behavior at places in $\cS_K(\Lambda)$ of
$K$ and $K(\sqrt{\alpha})$, respectively, and are bounded by
$2^{|\cS_K(\Lambda)|}$.
\end{thm}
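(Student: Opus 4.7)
The plan is to parametrize quadratic extensions $L/K$ via Kummer theory and apply Fourier analysis on the finite $2$-group $\O_{K,\cS}^\times/(\O_{K,\cS}^\times)^2$ to convert $\Phi_\Lambda(s;K)$ into the stated combination of ratios of Hecke $L$-functions. This follows the blueprint of Wright~\cite{MR969545}. I begin by writing each $L = K(\sqrt{\beta})$ for $\beta \in K^\times/(K^\times)^2$. Since $\cS$ contains all primes above $2$ and all primes of $K$ ramified over $\Q$, tame ramification theory at each odd prime $\mathfrak{p}\notin\cS$ gives $v_\mathfrak{p}(\Disc(L/K)) \in \{0,1\}$ according to the parity of $v_\mathfrak{p}(\beta)$. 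Thus $\Nm_{K/\Q}^{S(\Lambda)}(\Disc(L/K))^{-s} = \Nm(\mathfrak{b}(\beta))^{-s}$, where $\mathfrak{b}(\beta)$ is the squarefree $\cS$-coprime ideal supported at the primes $\mathfrak{p}$ where $v_\mathfrak{p}(\beta)$ is odd. The local constraints packaged in $\Lambda_v$ for $v\in S(\Lambda)$ translate into conditions on the local classes $[\beta]\in K_\mathfrak{p}^\times/(K_\mathfrak{p}^\times)^2$ for $\mathfrak{p}\in\cS_K(\Lambda)$.

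Next I reindex using the divisor exact sequence: the map $\beta \mapsto \mathfrak{b}(\beta)$ has kernel $\O_{K,\cS}^\times/(\O_{K,\cS}^\times)^2$ and its image consists of those squarefree $\cS$-coprime ideals whose class lies in $2\Cl(\O_{K,\cS})$. Adding in the $\beta = 1$ term (which contributes $1$ to the sum and will be subtracted back to produce the $-1$ on the right-hand side), the sum over $\beta$ becomes a sum over admissible $\mathfrak{b}$ multiplied by a count of $\cS$-units whose product with a fixed lift of $\mathfrak{b}$ satisfies the imposed $\cS$-local conditions. Fourier-expanding the indicator function of these conditions as a sum over characters of $\O_{K,\cS}^\times/(\O_{K,\cS}^\times)^2$ pulls out coefficients $B_\Lambda(K)$ from the trivial character and $B_\Lambda(K,\alpha)$, one per nontrivial character. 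By Kummer theory, the nontrivial characters are identified with elements of the set $\FF_4^\un(\Lambda,K)$ of unramified-outside-$\cS_K(\Lambda)$ quadratic extensions of $K$. Since each $B_\Lambda(K,\alpha)$ is a signed average of a character over at most $2^{|\cS_K(\Lambda)|}$ admissible local classes at places of $\cS_K(\Lambda)$, the stated bound is immediate.

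After Fourier inversion, the inner sum for each $\alpha$ is $\sum_{\mathfrak{b}} \chi_\alpha(\mathfrak{b})\Nm(\mathfrak{b})^{-s}$ over squarefree $\cS$-coprime ideals, where $\chi_\alpha(\mathfrak{p}) = \chi_{K,\mathfrak{p}}(\alpha)$ is the quadratic symbol at $\mathfrak{p}$. This factors as the Euler product $\prod_{\mathfrak{p}\notin\cS}(1 + \chi_{K,\mathfrak{p}}(\alpha)\Nm(\mathfrak{p})^{-s})$, which by~\eqref{eqLKLambdaDef} equals $L_{K,\cS}(s;\alpha)/L_{K,\cS}(2s;\alpha)$; the trivial character similarly contributes $\zeta_{K,\cS}(s)/\zeta_{K,\cS}(2s)$. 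Collecting terms yields the asserted formula.

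The main obstacle is executing the middle step cleanly: I must verify that the Kummer--CFT identification of characters of $\O_{K,\cS}^\times/(\O_{K,\cS}^\times)^2$ with $\FF_4^\un(\Lambda,K)\sqcup\{K\}$ reproduces exactly the local symbols $\chi_{K,\mathfrak{p}}(\alpha)$ appearing in the Euler product, and that the constraint ``$[\mathfrak{b}]\in 2\Cl(\O_{K,\cS})$'' governing nonemptiness of the Kummer fibers is absorbed automatically by the same character expansion---the extra characters picking up the class-group contribution must be organized to match the indexing by $\FF_4^\un(\Lambda,K)$ via class field theory. Once this bookkeeping is set up, identifying the Fourier coefficients with the $B_\Lambda$'s and confirming the $2^{|\cS_K(\Lambda)|}$ bound is routine.
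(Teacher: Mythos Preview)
Your approach is the same as the paper's in spirit---both implement Wright's character-averaging method and arrive at a sum over $\alpha$ of Euler products $\prod_{\mathfrak p\notin\cS}(1+\chi_{K,\mathfrak p}(\alpha)\Nm(\mathfrak p)^{-s})$---but the paper sets this up id\`ele-theoretically rather than via Kummer theory. Concretely, the paper parametrizes quadratic extensions of $K$ by characters $\chi:\A_{K,\cS}^\times/\O_{K,\cS}^\times\to\{\pm1\}$ (using that $\Cl_\cS(K)$ is odd, so these coincide with characters of the id\`ele class group), and then detects the condition $\chi|_{\O_{K,\cS}^\times}=1$ by averaging $\chi(\alpha)$ over $\alpha\in\O_{K,\cS}^\times/(\O_{K,\cS}^\times)^2$. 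This is Poisson-dual to your setup (you sum over Kummer elements $\beta$ and Fourier-expand over characters), and it absorbs your acknowledged class-group obstruction without further work: once $\Cl_\cS(K)$ is odd there are no ``extra characters'' to organize, and every squarefree $\cS$-coprime ideal $\mathfrak b$ arises as $\mathfrak b(\beta)$.

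There is, however, a gap you do not flag. You assert that the nontrivial characters of $\O_{K,\cS}^\times/(\O_{K,\cS}^\times)^2$ are indexed by $\FF_4^\un(\Lambda,K)$, i.e.\ by quadratic extensions unramified outside $\cS_K(\Lambda)$. But $\cS$ is strictly larger than $\cS_K(\Lambda)$---it also contains the primes of $K$ ramified over $\Q$ that lie above primes \emph{outside} $S(\Lambda)$---so a priori your $\alpha$'s index extensions unramified outside $\cS$, not outside $\cS_K(\Lambda)$. The paper closes this by a local computation at each $v\in\cS\setminus\cS_K(\Lambda)$: since $\Lambda$ forbids total ramification there, the admissible local characters are exactly the two unramified ones $\{\chi_{1,v},\chi_{n_v}\}$, and one checks from the character table that $\tfrac12\sum_{\chi_v}\chi_v(\alpha)$ equals $1$ or $0$ according as $v(\alpha)$ is even or odd. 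This single computation does three things simultaneously: it kills the terms with $K(\sqrt\alpha)\notin\FF_4^\un(\Lambda,K)$; it collapses the product over $\cS$ defining $B_\Lambda(K,\alpha)$ to a product over $\cS_K(\Lambda)$ alone; and it delivers the bound $2^{|\cS_K(\Lambda)|}$ rather than the na\"ive $2^{|\cS|}$. Your proposal asserts both the final indexing and the final bound without supplying this step.
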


This section is organized as follows. First, in \S\ref{3.1} and \S\ref{3.2}, we
recall work of Wright \cite{MR969545} using class field theory to
parametrize quadratic extensions of $K$ and write $\Phi_\Lambda(s;K)$
as a sum of Euler products. In \S\ref{3.3}, we relate these Euler products to
$L_{K,\Lambda}(s;\alpha)$ and $\zeta_{K,\Lambda}(s)$.

\subsection{Parametrizing quadratic extensions of number fields
  via global class field theory}\label{3.1}

Let $K$ be a number field. Let $\A_K$ and $\A_K^\times$ denote the
ring of adeles of $K$ and the idele group of $K$, respectively. For a
finite set $\CS$ of places of $K$ containing all the infinite places,
let $\A_{K,\CS}$ and $\A_{K,\CS}^\times$ respectively denote the ring
of $\CS$-adeles and the group of $\CS$-ideles of $K$.  The ring of
$\CS$-integers, $\O_{K,\CS}$, consists of all $x \in K$ such that $x$
is integral away from prime ideals associated to places in $\CS$.
Let $\Cl_\CS(K)$ denote the $\CS$-class group of $K$, i.e., the
quotient of the class group of $K$ by the subgroup generated by the
ideal classes of the prime ideals of $\CS$. We begin with the
following lemma.

\begin{lemma}\label{lemcftosbij}
Let $K$ be a number field, and let $\CS$ be a finite set of places of
$K$ containing all infinite places such that $\Cl_{\CS}(K)$ is
odd. Then there is a natural isomorphism
\begin{equation*}
  \Hom(\A_K^\times/K^\times,\{\pm 1\})\cong
  \Hom(\A_{K,\CS}^\times/\O_{K,\CS}^\times,\{\pm 1\}).
\end{equation*}
\end{lemma}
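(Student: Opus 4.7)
The plan is to derive the isomorphism from an explicit short exact sequence that puts the hypothesis on $\Cl_\CS(K)$ in direct dialogue with the target $\{\pm 1\}$. First I would observe that the inclusion $\A_{K,\CS}^\times \hookrightarrow \A_K^\times$ satisfies $\A_{K,\CS}^\times \cap K^\times = \O_{K,\CS}^\times$ (an element of $K^\times$ whose $v$-valuation vanishes for all $v \notin \CS$ is by definition an $\CS$-unit), so the induced map on quotients
\[
\A_{K,\CS}^\times/\O_{K,\CS}^\times \hookrightarrow \A_K^\times/K^\times
\]
is injective, and the natural map in the lemma is precisely restriction along this injection.

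The heart of the argument is identifying the cokernel of this injection with $\Cl_\CS(K)$. I would use the classical "idele to ideal" map $(a_v) \mapsto \prod_{v \notin \CS} \mathfrak{p}_v^{v(a_v)}$, which composes with $\Cl(K) \twoheadrightarrow \Cl_\CS(K)$ to give a surjection $\A_K^\times \twoheadrightarrow \Cl_\CS(K)$. The containment $K^\times \cdot \A_{K,\CS}^\times \subseteq \ker$ is immediate, and conversely if $(a_v)$ lies in the kernel then $\prod_{v \notin \CS} \mathfrak{p}_v^{v(a_v)} = (\alpha)\cdot\prod_{v \in \CS}\mathfrak{p}_v^{n_v}$ for some $\alpha \in K^\times$ and integers $n_v$, so $\alpha^{-1}(a_v) \in \A_{K,\CS}^\times$. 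This yields the short exact sequence
\[
1 \longrightarrow \A_{K,\CS}^\times/\O_{K,\CS}^\times \longrightarrow \A_K^\times/K^\times \longrightarrow \Cl_\CS(K) \longrightarrow 1.
\]

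Finally I would apply $\Hom(-,\{\pm 1\})$. The long exact sequence reads
\[
0 \to \Hom(\Cl_\CS(K),\{\pm 1\}) \to \Hom(\A_K^\times/K^\times,\{\pm 1\}) \to \Hom(\A_{K,\CS}^\times/\O_{K,\CS}^\times,\{\pm 1\}) \to \Ext^1(\Cl_\CS(K),\{\pm 1\}),
\]
and both outer terms vanish because $\Cl_\CS(K)$ is a finite abelian group whose order is coprime to $2$. If one prefers to avoid $\Ext^1$, surjectivity can be made explicit: given $\psi$ on $\A_{K,\CS}^\times/\O_{K,\CS}^\times$, set $\hat\psi(g) := \psi(g^n)$ with $n = \#\Cl_\CS(K)$; this is well-defined since $g^n$ lies in the subgroup $K^\times\cdot\A_{K,\CS}^\times$, is a homomorphism since the target is abelian, and restricts to $\psi$ because $\psi(g)^n = \psi(g)$ for $n$ odd and $\psi(g) \in \{\pm 1\}$. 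The only real work is the idele-to-ideal bookkeeping that produces the short exact sequence; once that is in hand, the coprimality of orders immediately forces the two character groups to agree.
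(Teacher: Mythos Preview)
Your proof is correct and follows essentially the same approach as the paper: both establish the short exact sequence $1 \to \A_{K,\CS}^\times/\O_{K,\CS}^\times \to \A_K^\times/K^\times \to \Cl_\CS(K) \to 1$ and then use the odd order of $\Cl_\CS(K)$ to conclude, with the explicit surjectivity argument via $g \mapsto \psi(g^n)$ matching the paper's construction verbatim. You supply more detail on why the exact sequence holds and also offer the $\Ext^1$ viewpoint, which the paper omits, but the underlying strategy is identical.
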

\begin{proof}
We have the exact sequence
\begin{equation}\label{Sexactsequence}
0\longrightarrow\A_{K,\CS}^\times/\O_{K,\CS}^\times
\stackrel{\iota}{\longrightarrow}
\A_K^\times/K^\times\longrightarrow\Cl_\CS(K)\longrightarrow0.
\end{equation}
Since $\Cl_\CS(K)$ is odd by assumption, it follows that
$\Hom(\Cl_\CS(K),\{\pm 1\})=0$. Therefore, the map
\begin{equation}\label{eqinjectchar}
  \Hom(\A_K^\times/K^\times,\{\pm 1\})\to
  \Hom(\A_{K,\CS}^\times/\O_{K,\CS}^\times,\{\pm 1\}),
\end{equation}
obtained by composition, is an injection. 

To prove that it is a surjection, fix a character
$\chi:\A_{K,\CS}^\times/\O_{K,\CS}^\times\to\{\pm 1\}$. Let $n$ denote
the size of $\Cl_\CS(K)$. For any $x\in\A_K/K^\times$, the element
$x^{n}$ has trivial image in $\Cl_\CS(K)$, hence there exists $y_x \in
\A_{K,\CS}^\times/\O_{K,\CS}^\times$ such that $\iota(y_x)=
x^{n}$. Define the character
$\chi_0:\A_K^\times/K^\times\to\{\pm 1\}$ by
$$\chi_0(x): =\chi(y_x)$$ Now, if $z \in
\A_{K,\CS}^\times/\O_{K,\CS}^\times$ and $\iota(z) = x$, then $y_x =
z^{n}$. Since $n$ is odd, we have $$\chi_0 \circ \iota(z) = \chi(y_x)
= \chi(z^{|\Cl_\CS(K)|}) = \chi(z).$$ Hence, the injection described
in \eqref{eqinjectchar} is also a surjection, and so the lemma
follows.
\end{proof}

Every character $\chi:\A_{K,\CS}^\times\to\{\pm 1\}$ decomposes into a
product of local characters as
\begin{equation}\label{charlocprod}
 \chi=\prod_{v \in \nu(K)} \chi_v,\mbox{ where } \left\{
\begin{array}{lll}
  \chi_v:K_v^\times\to\{\pm 1\}      & \mbox{for } v\in \CS;\\[.05in]
  \chi_v:\O_{K,v}^\times\to\{\pm 1\}  & \mbox{for } v\not\in \CS.
\end{array}
\right.
\end{equation}
For $v\not\in \CS$, note that there are precisely two possible
characters for $\chi_v$ as any such character must factor through
$(\cO_{K,v}^\times)^2$ and
$\cO_{K,v}^\times/(\cO_{K,v}^\times)^2\cong \{\pm1\}$.  

Next, we have
the following parametrization of quadratic extensions of $K$.
\begin{proposition} \label{propbij}
Let $K$ be a number field and $\CS$ be a finite set of places of $K$
containing all infinite places such that $\Cl_\CS(K)$ is odd. Then
there is a bijection
\begin{equation*}
\Hom(\bA_{K,\CS}^\times/\cO_{K,\CS}^\times, \{\pm1\})
\longleftrightarrow \bigl\{\mbox{quadratic \'etale extensions of }
K\bigr\},
\end{equation*}
between the set of quadratic characters of
$\bA_{K,\CS}^\times/\cO_{K,\CS}^\times$ and the set of quadratic
\'etale extensions of $K$. Moreover, if $L$ is a quadratic extension
of $K$ corresponding to $\chi = \prod \chi_v$,
then:
\begin{itemize}
\item[{\rm (a)}] For $v\in\CS$, the field extension comprising the
  \'etale algebra $L_v:=L\otimes_K K_v$ corresponds to $\chi_v$ under
  the bijection of local class field theory;
\item[{\rm (b)}] For $v\not\in\CS$, the extension $L_v$ is unramified
  if and only if $\chi_v$ is trivial.
\end{itemize}
\end{proposition}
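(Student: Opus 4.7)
The plan is to combine Lemma~\ref{lemcftosbij} with the global Artin reciprocity isomorphism from class field theory. Concretely, I would construct a chain of natural bijections
\begin{equation*}
\Hom(\bA_{K,\CS}^\times/\cO_{K,\CS}^\times,\{\pm 1\})
\,\cong\, \Hom(\bA_K^\times/K^\times,\{\pm 1\})
\,\cong\, \Hom(\Gal(K^{\rm ab}/K),\{\pm 1\}),
\end{equation*}
where the first isomorphism is Lemma~\ref{lemcftosbij} and the second is induced by the global Artin map $\psi_K$. Via Galois theory, the last $\Hom$-group is in bijection with the set of quadratic \'etale extensions of $K$: the trivial character corresponds to the split algebra $K\times K$, while each nontrivial character cuts out a unique quadratic field extension.

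For part (a), I would invoke the local--global compatibility of the Artin map: for each place $v$ of $K$, the restriction of $\psi_K$ to $K_v^\times\hookrightarrow\bA_K^\times$ agrees with the local Artin map $\psi_{K_v}\colon K_v^\times\to\Gal(K_v^{\rm ab}/K_v)$. Hence, if $\chi_0$ denotes the extension of $\chi$ to a character of $\bA_K^\times/K^\times$ provided by Lemma~\ref{lemcftosbij}, then under local class field theory the \'etale algebra $L_v$ corresponds exactly to the local component $\chi_{0,v}$. For $v\in\CS$, any idele supported at $v$ already lies in $\bA_{K,\CS}^\times$, so $\chi_{0,v}=\chi_v$ on all of $K_v^\times$, which is precisely (a).

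For part (b), the same compatibility yields $\chi_{0,v}|_{\cO_{K,v}^\times}=\chi_v$ for every $v\notin\CS$. By local class field theory, the inertia subgroup of $\Gal(K_v^{\rm ab}/K_v)$ corresponds under $\psi_{K_v}$ to $\cO_{K,v}^\times$, and so the \'etale algebra $L_v$ is unramified (i.e.\ either split or the unique unramified quadratic extension of $K_v$) precisely when $\chi_{0,v}$ is trivial on $\cO_{K,v}^\times$, i.e., when $\chi_v$ is trivial.

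The main technical point to watch is that the extension $\chi_0$ of $\chi$ produced by the proof of Lemma~\ref{lemcftosbij} -- constructed by taking $|\Cl_\CS(K)|$-th powers to land inside $\bA_{K,\CS}^\times/\cO_{K,\CS}^\times$ -- really does restrict to $\chi_v$ on $K_v^\times$ for $v\in\CS$ and to $\chi_v$ on $\cO_{K,v}^\times$ for $v\notin\CS$. Both assertions reduce to the elementary observation that an idele supported at a single place $v$ either already lies in $\bA_{K,\CS}^\times$ (when $v\in\CS$) or, after decomposition, lies in $\cO_{K,v}^\times$ (when $v\notin\CS$), so the odd-power-taking step in the construction of $\chi_0$ becomes trivial upon restriction to each local component.
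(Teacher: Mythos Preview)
Your proposal is correct and follows essentially the same approach as the paper's own proof: both combine Lemma~\ref{lemcftosbij} with the global class field theory bijection and then appeal to local--global compatibility of the Artin map to deduce (a) and (b). The paper phrases the first bijection directly in terms of index-$2$ subgroups of $\bA_K^\times/K^\times$ rather than passing explicitly through $\Gal(K^{\rm ab}/K)$, but this is a cosmetic difference.
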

\begin{proof}
Global class field theory gives a bijection between (isomorphism
classes of) quadratic extensions of $K$ and index-2 subgroups of
$\bA_K^\times/K^\times$, the idele class group of $K$. Such index-2
subgroups are exactly the kernels of nontrivial characters $\chi_0:
\bA_K^\times/K^\times \rightarrow \{\pm 1\}$. The first assertion of
the proposition then follows from Lemma \ref{lemcftosbij}.

Now, let $\chi_0 = \prod_{v}\chi_{0,v}:\A_K^\times/K^\times\to\{\pm
1\}$ be the character associated to $L$ by class field theory. The
compatibility of local and global class field theory implies that (the
field extension of $K_v$ contained as a direct summand of)
$L_v:=L\otimes_K K_v$ corresponds to $\chi_{0,v}:K_v^\times\to\{\pm
1\}$ under the bijection of local class field theory.  Let
$\chi=\prod_v\chi_v:\A_{K,\CS}^\times/\O_{K,\CS}^\times\to\{\pm 1\}$
be the character corresponding to $\chi_0$ under the bijection of
Lemma~\ref{lemcftosbij}.  Part (a) of the second assertion of the
Proposition follows immediately since $\chi_v=\chi_{0,v}$ for $v\in
\CS$. For (b), note that $\chi_v=\chi_{0,v}|_{\O_{K,v}^\times}$ for
$v\not\in \CS$. In addition, $L_v/K_v$ is unramified if and only if
the tame inertia group is trivial. Under the local Artin map, the tame
inertia group is trivial if and only if the character $\chi_{0,v}$
restricted to $\O_{K,v}^\times$ is trivial, hence the proposition
follows.
\end{proof}

\subsection{Characters associated to  acceptable collections of local specifications}\label{3.2}

We return to the setting of Theorem \ref{thmsec2main}. Namely, let
$\Lambda$ be an acceptable collection of local
specifications with finite total ramification as in Definition \ref{localspec}. Let $K\in\cF_2(\Lambda)$ and recall that
$\cS  = \cS_K^{\rm ram}(\Lambda) = \cS_K(\Lambda)\cup\{v \in \nu(K) :v\mid\Delta(K)\}$ to be the set of places
in $K$ that are either above some place in $S(\Lambda)$ or
ramified. Recall  that $\cS$ includes all even and all infinite places
since $S(\Lambda)$ does.

Given a nontrivial character $\chi_v:K_v^\times\to\{\pm1\}$ of the completion of $K$ at $v$, recall
that $\chi_v$ corresponds to the degree-$2$ field extension of $K_v$
via local class field theory as in Proposition \ref{propbij}. If
$\chi_v$ is trivial, then we say that $\chi_v$ corresponds to the
\'etale algebra $K_v \oplus K_v$. 
\begin{defn}\label{ccdef}
Keep the notation as above. For $v\in\val(K)$:
\begin{itemize}
\item[{\rm (a)}] For $v\in\val(K)$ above a place in $S(\Lambda)$,
  define $\cC_{\Lambda,v}(K)$ to be the set of characters
  $\chi_v:K_v^\times\to\{\pm 1\}$ that correspond under Proposition \ref{propbij} to an algebra
  $L_v/K_v$ such that $(L_v,K_v)\in\Lambda_v$. Define
$\q_v(\chi_v):=1$ for each $\chi_v \in \cC_{\Lambda,v}(K)$.  
\item[{\rm (b)}] For a ramified place $v$ of $K$ that is not above any
  place in $S(\Lambda)$, define $\cC_{\Lambda,v}(K)$ to be the set of
  characters $\chi_v:K_v^\times\to\{\pm 1\}$ (including the trivial character) that correspond to all
  unramified quadratic extensions of $K_v$. Define
$\q_v(\chi_v):=1$ for each $\chi_v \in \cC_{\Lambda,v}(K)$.   
\item[{\rm (c)}] For a place $v$ of $K$ that does not belong to $\cS$,
  define $\cC_{\Lambda,v}(K)$ to be the set of all (two) characters
  $\chi_v:\cO_{K,v}^\times\to\{\pm1\}$. Define
\begin{equation*}
\q_v(\chi_v) := \begin{cases} 1 & \mbox{ if $\chi_v$ is
trivial, and} \\ p & \mbox{ if $\chi_v$ is the nontrivial
quadratic character.}
\end{cases}
\end{equation*}
(Note that since $v\nmid 2$, there are only two characters $\chi_v:
\cO_{K,v}^\times \rightarrow \{\pm 1\}$; the trivial character
$\chi_{1,v}$ and the unique nontrivial quadratic character
$\chi_{K,v}$.)
\end{itemize}
Let $\cC_{\Lambda}(K)$ denote the set of all quadratic characters
$\chi =\prod_v \chi_v: \bA^\times_{K,\cS} \rightarrow \{\pm 1\}$ such
that $\chi_v \in \cC_{\Lambda,v}(K)$ for all $v \in \cS$, and
let $\cC^{\ast}_{\Lambda}(K)$ denote the characters in
$\cC_{\Lambda}(K)$ that vanish on $\cO_{K,\cS}^\times$. 
For such a $\chi\in\cC_{\Lambda}(K)$, 
we define
\begin{equation}\label{qqdef}
\q(\chi):=\prod_{v\in\val(K)}\q_v(\chi_v).
\end{equation}
\end{defn}
\noindent The significance of the above definitions is from the
following result.
\begin{proposition}\label{corbij}
Let $\Lambda$ be an acceptable collection of local
specifications with finite total ramification as in Definition \ref{localspec}, and recall the notation in Definition \ref{ccdef}. If $K \in
\cF_2(\Lambda)$, then there is a natural bijection between the sets
$\cC^{\ast}_{\Lambda}(K)\setminus\{\rm triv\}$ and
$\FF_4(\Lambda,K)$ analogous to the bijection in Proposition \ref{propbij}. Moreover, if a character $\chi \in
\cC^{\ast}_{\Lambda}(K)$ corresponds to the quadratic extension $L$ of
$K$ under this bijection, then we have
$\q(\chi)=\q^{S(\Lambda)}(L,K)$.
\end{proposition}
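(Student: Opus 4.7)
The plan is to derive Proposition \ref{corbij} directly from Proposition \ref{propbij} applied to the quadratic field $K$ with the set of places $\cS = \cS_K^{\rm ram}(\Lambda)$. My strategy is three-fold: (i) verify the odd-class-group hypothesis of Proposition \ref{propbij}, (ii) check that the characters in $\cC^{\ast}_\Lambda(K)\setminus\{{\rm triv}\}$ match the local conditions defining $\FF_4(\Lambda, K)$ place by place, and (iii) prove the identity $\q(\chi) = \q^{S(\Lambda)}(L, K)$ factor by factor.

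For (i), I would appeal to genus theory: in a quadratic field $K/\bQ$, the quotient $\Cl(K)/\Cl(K)^2$ is generated by the ideal classes of primes of $K$ that ramify over $\bQ$. Since $\cS = \cS_K^{\rm ram}(\Lambda)$ contains all such primes by construction, the $\cS$-class group $\Cl_\cS(K)$ has odd order. Hence Proposition \ref{propbij} applies, giving a bijection between $\Hom(\bA_{K,\cS}^\times/\O_{K,\cS}^\times, \{\pm 1\})$ and quadratic \'etale extensions of $K$, with the trivial character corresponding to the split algebra $K \oplus K$ and the nontrivial characters corresponding to quadratic field extensions.

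For (ii), I would compare the three cases in Definition \ref{ccdef} against Definition \ref{localspec} place by place. At a place $v$ of $K$ above $S(\Lambda)$, membership $\chi_v \in \cC_{\Lambda, v}(K)$ encodes $(L_v, K_v) \in \Lambda_v$ by direct construction. At a place $v$ ramified in $K$ with $v|_\bQ = p \notin S(\Lambda)$, the set $\cC_{\Lambda, v}(K)$ restricts to characters corresponding to unramified $L_v/K_v$, which is equivalent to $(L_p, K_p)$ not being totally ramified in the sense of Definition \ref{localspec} (since $K_p/\bQ_p$ is ramified), and hence to $(L_p, K_p) \in \Lambda_p^{\rm ntr} = \Lambda_p$. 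At a place $v \notin \cS$, both quadratic characters are allowed in $\cC_{\Lambda, v}(K)$, consistent with $(L_p, K_p)$ automatically lying in $\Lambda_p^{\rm ntr} = \Lambda_p$, since $K_p/\bQ_p$ is unramified. Assembling these matches over all places shows that the bijection of Proposition \ref{propbij} restricts to a bijection between $\cC^{\ast}_\Lambda(K)\setminus\{{\rm triv}\}$ and $\FF_4(\Lambda, K)$.

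For (iii), I would write $\q^{S(\Lambda)}(L,K) = \Nm_{K/\bQ}(\Disc(L/K))^{S(\Lambda)}$ as a product over places $v$ of $K$ with $v|_\bQ \notin S(\Lambda)$ and compute the local contribution at each. For such $v$ ramified in $K$, $L_v/K_v$ is unramified by (ii), so the local contribution is $1$, matching $\q_v(\chi_v) = 1$. For $v \notin \cS$, the residue characteristic is odd, so tame local class field theory identifies nontrivial characters of $\cO_{K,v}^\times$ with the tamely ramified quadratic extension of $K_v$, whose relative discriminant has ideal norm equal to the residue cardinality of $v$; this matches $\q_v(\chi_v)$ in the nontrivial case, and the trivial character gives the unramified extension with contribution $1$. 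Taking the product yields $\q(\chi) = \q^{S(\Lambda)}(L, K)$. The main point requiring care --- more a bookkeeping concern than a substantive obstacle --- is keeping straight how the bijection of Proposition \ref{propbij} restricts to $\cC^{\ast}_\Lambda(K)$, and ensuring the trivial character is excluded, since it corresponds to the split \'etale algebra $K \oplus K$ rather than to a quadratic field.
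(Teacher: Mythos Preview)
Your proposal is correct and follows essentially the same route as the paper: invoke Proposition~\ref{propbij} for the bijection, then verify the identity $\q(\chi)=\q^{S(\Lambda)}(L,K)$ by a place-by-place case analysis. The paper organizes the second computation over rational primes $p$ (showing $\q_p(L,K)=\prod_{v\mid p}\q_v(\chi_v)$), whereas you index directly over places $v$ of $K$; these are two ways of grouping the same product. One point worth noting: your step~(i), invoking genus theory to verify that $\Cl_{\cS}(K)$ is odd, actually fills a small gap the paper leaves implicit---the paper simply writes ``the first assertion follows immediately from the definitions and Proposition~\ref{propbij}'' without checking the odd-class-group hypothesis, so your inclusion of this argument is a genuine improvement in completeness.
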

\begin{proof}
The first assertion follows immediately from the definitions and
Propositions \ref{propbij}. To prove the second assertion, note that
$\q(\chi)$ and $\q^{S(\Lambda)}(L,K)$ are defined in terms of products
over $v\in\val(K)$ and $v\in\val(\Q)$, respectively. It is therefore
enough to prove that for all primes $p\in\val(\Q)$, we have
$\q_p(L,K)=\prod_v\q_v(\chi_v)$, where the product is over all
$v\in\val(K)$ above $p$. We do this case by case as follows.

First, if $p\in S(\Lambda)$, then $\q_p(L,K)=1$ and $\q_v(\chi_v)=1$
for every $v$ above $p$ (since in that case,
$v\in\cS_K(\Lambda)$. Next, assume that $p\notin S(\Lambda)$ (implying
$p\neq 2$) and that $p\nmid\Delta(K)$. In this case, we separately
consider the scenarios in which $p$ splits in $K$ and $p$ stays
inert. When $p$ splits as $v\bar{v}$, note that $L/K$ is ramified in
either none, one, or both of $v$ and $\bar{v}$, and that $\q_p(L,K)$
is either $1$, $p$, or $p^2$, respectively. Meanwhile, $\q_v(\chi_v)$
is either $1$ or $p$ depending on whether or not $\chi_v$ is trivial
or not on $\cO_{K,v}^\times$. The required equality then follows from
Part (b) of the second assertion in Proposition \ref{propbij}. The
case when $p$ stays inert in $K$ is identical.

Finally, we consider the case when $p\notin S(\Lambda)$ and
$p\mid\Delta(K)$. Denote the place in $K$ above $p$ by $v$. Since
$v\in\cS_K(\Lambda)$, it follows that $\q_v(\chi_v)=1$. Moreover, by
the definition of $\Lambda$, it follows that $v$ does {\it not} ramify
in $L$. Therefore, we have $\q_p(L,K)=1$. Altogether, we have proven
the proposition.
\end{proof}
By Proposition \ref{corbij} in conjunction with \eqref{Phi}, we have:
\begin{equation}\label{phichi}
\Phi_{\Lambda}(s;K) = -1+\sum_{\chi \in \cC^\ast_{\Lambda}(K)} \q(\chi)^{-s}.
\end{equation}
We would like to express $\Phi_{\Lambda}(s;K)$ as a sum over $\cC_\Lambda^\ast(K)$ so that it naturally factors into an Euler product. To do so, we follow Wright \cite{MR969545} and average over a set of representatives for $\O_{K,\cS}^\times/(\O_{K,\cS}^{\times})^2$. 
\begin{proposition}\label{propwrightexp}
Let $\Lambda$ be an acceptable collection of local
specifications with finite total ramification as in Definition \ref{localspec}, and recall the notation in Definition \ref{ccdef}. Hence, we let 
$\cS  = \cS_K(\Lambda)\cup\{v \in \nu(K) :v\mid\Delta(K)\}$, and additionally, we let
$\cA_{K,\cS}$ denote a set of representatives in $\O_{K,\cS}^\times$
for $\O_{K,\cS}^\times/(\O_{K,\cS}^{\times})^2$. If $K \in \cF_2(\Lambda)$, then we have
\begin{equation}\label{eqepgen}
  \Phi_{\Lambda}(s;K)=-1+\frac{1}{2^{|\cS|}}
  \sum_{\alpha\in\cA_{K,S}}\biggl(\;\prod_{{v\in \cS}}
  \Bigl(\sum_{ \chi_v \in \cC_{\Lambda,v}(K)}
  \chi_v(\alpha_v)\Bigr)\;\cdot
  \prod_{v\not\in \cS}\Bigl(1+\frac{\chi_{K,v}(\alpha_v)}{\Nm_{K_v}(v)^s}\Bigr)
  \biggl)
\end{equation}
where $\alpha_v$ denotes the image of $\alpha$ in $K_v^\times$ or $\cO_{K,v}^\times$, and $\chi_{K,v}$ denotes the unique nontrivial quadratic character
on $\O_{K,v}^\times$.
\end{proposition}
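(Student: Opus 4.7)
The plan is to start from the identity \eqref{phichi}
\[\Phi_\Lambda(s;K) \;=\; -1+\sum_{\chi\in\cC^\ast_\Lambda(K)}\q(\chi)^{-s},\]
and enlarge the range of summation from $\cC^\ast_\Lambda(K)$ to the full set $\cC_\Lambda(K)$ by inserting an orthogonality-style average over $\cA_{K,\cS}$. Once the sum ranges over all of $\cC_\Lambda(K)$, the multiplicative structures $\chi=\prod_v\chi_v$ and $\q(\chi)=\prod_v\q_v(\chi_v)$ will let me factor the result into an Euler product indexed by $v\in\val(K)$, after which the two types of places ($v\in\cS$ and $v\notin\cS$) produce the two kinds of local factors in \eqref{eqepgen}.

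For the orthogonality step, I first note that Dirichlet's $\cS$-unit theorem applied to the quadratic field $K$ gives $\O_{K,\cS}^\times\cong\mu_K\times\Z^{|\cS|-1}$, and the $2$-torsion of $\mu_K$ is $\{\pm 1\}$ for every quadratic $K$ (including the exceptional cases $\bQ(i)$ and $\bQ(\sqrt{-3})$, where $\mu_K$ is larger but still has exactly two elements of order dividing $2$). Hence $\O_{K,\cS}^\times/(\O_{K,\cS}^\times)^2\cong(\Z/2\Z)^{|\cS|}$, and consequently $|\cA_{K,\cS}|=2^{|\cS|}$. Standard orthogonality for $\{\pm 1\}$-valued characters on this finite elementary abelian $2$-group yields
\[\sum_{\alpha\in\cA_{K,\cS}}\chi(\alpha)\;=\;\begin{cases} 2^{|\cS|} & \text{if $\chi|_{\O_{K,\cS}^\times}$ is trivial, i.e., if $\chi\in\cC^\ast_\Lambda(K)$,} \\ 0 & \text{otherwise.} \end{cases}\]
Inserting this indicator into \eqref{phichi} and exchanging the order of summation produces
\[\Phi_\Lambda(s;K)\;=\;-1+\frac{1}{2^{|\cS|}}\sum_{\alpha\in\cA_{K,\cS}}\sum_{\chi\in\cC_\Lambda(K)}\chi(\alpha)\,\q(\chi)^{-s}.\]

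Next I will factor the inner sum as an Euler product. By Definition \ref{ccdef}, the set $\cC_\Lambda(K)$ is the restricted product of the $\cC_{\Lambda,v}(K)$, and both $\chi(\alpha)=\prod_v\chi_v(\alpha_v)$ and $\q(\chi)=\prod_v\q_v(\chi_v)$ decompose across places. Combined with absolute convergence for $\Re(s)>1$ (each local factor at $v\notin\cS$ is $1+O(\Nm_{K_v}(v)^{-s})$), this gives
\[\sum_{\chi\in\cC_\Lambda(K)}\chi(\alpha)\,\q(\chi)^{-s}\;=\;\prod_{v\in\val(K)}\Bigl(\sum_{\chi_v\in\cC_{\Lambda,v}(K)}\chi_v(\alpha_v)\,\q_v(\chi_v)^{-s}\Bigr).\]
For $v\in\cS$, parts (a) and (b) of Definition \ref{ccdef} set $\q_v(\chi_v)=1$, so the local factor is simply $\sum_{\chi_v}\chi_v(\alpha_v)$. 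For $v\notin\cS$, part (c) gives $\cC_{\Lambda,v}(K)=\{\chi_{1,v},\chi_{K,v}\}$ with $\q_v(\chi_{1,v})=1$ and $\q_v(\chi_{K,v})=\Nm_{K_v}(v)$, so the local factor collapses to $1+\chi_{K,v}(\alpha_v)\Nm_{K_v}(v)^{-s}$. Assembling these factors yields the claimed identity \eqref{eqepgen}.

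The argument is almost a purely formal manipulation once the setup is in place, so the only real care points are verifying $|\cA_{K,\cS}|=2^{|\cS|}$ uniformly across quadratic $K$ and justifying the Euler product interchange by absolute convergence; both are easily handled via Dirichlet's $\cS$-unit theorem and the trivial bound on local factors, respectively. No deep step is required, since the essential work of identifying $\cC^\ast_\Lambda(K)\setminus\{\mathrm{triv}\}$ with $\FF_4(\Lambda,K)$ preserving $\q$ has already been done in Proposition \ref{corbij}.
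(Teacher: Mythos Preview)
Your proof is correct and follows essentially the same approach as the paper: start from \eqref{phichi}, use Dirichlet's $\cS$-unit theorem to verify $|\cA_{K,\cS}|=2^{|\cS|}$, insert the orthogonality relation over $\cA_{K,\cS}$ to extend the sum from $\cC^\ast_\Lambda(K)$ to $\cC_\Lambda(K)$, and then factor the resulting sum as an Euler product with the two types of local factors. Your version is slightly more explicit about the structure of $\mu_K$ and about absolute convergence, but there is no substantive difference in method.
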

\begin{proof}
Recall that Dirichlet's unit theorem implies that $|\cA_{K,\cS}| =
2^{|\cS|}.$ For any character $\chi:\A_{K,\cS}^\times\to\{\pm 1\}$ in
$\cC_{\Lambda}(K)$, we have:
\begin{equation}\label{eqsumalpha}
\displaystyle\frac{1}{|\cA_{K,\cS}|}
\sum_{\alpha\in\cA_{K,\cS}}\chi(\alpha)\;\;=\;\;\left\{
\begin{array}{rl}
  1 &\mbox{if}\;\;\;\chi(\O_{K,\cS}^\times)=1;\\[.05in]
  0 &\mbox{if}\;\;\;\chi(\O_{K,\cS}^\times)\neq1;
\end{array}
\right.
\end{equation}
Rewriting the right hand side of the \eqref{phichi} using
\eqref{eqsumalpha}, we obtain
\begin{equation*}
  \Phi_{\Lambda}(s;K)=-1+
  \frac{1}{2^{|\cS|}}\sum_{\alpha\in\cA_{K,\cS}}
  \sum_{\chi \in \cC_{\Lambda}(K)}\frac{\chi(\alpha)}{\q(\chi)^s},
\end{equation*}
For each fixed $\alpha\in\cA_{K,\cS}$, let $\alpha_v$ denote the image of $\alpha$ in $K_v^\times$ for $v \in \cS$ or in $\cO_{K,v}^\times$ for $v \notin \cS$. Then the sum over $\chi \in
\cC_{\Lambda}(K)$ factors as:
\begin{eqnarray*}
  1+\Phi_{\Lambda}(s;K) &=& \frac{1}{2^{|\cS|}} \sum_{\alpha \in \cA_{K,\cS}}
  \biggl(\;\prod_{v \in \val(K)} \Bigl(\sum_{\chi_v \in \cC_{\Lambda,v}(K)}
  \frac{\chi_v(\alpha_v)}{\q_v(\chi_v)^s}\Bigr)\biggr)\\
&=&\frac{1}{2^{|\cS|}}
  \sum_{\alpha\in\cA_{K,\cS}}\biggl(\;\prod_{{v\in \cS}}
  \Bigl(\sum_{ \chi_v \in \cC_{\Lambda,v}(K)}
  \chi_v(\alpha_v)\Bigr)\;\cdot
  \prod_{v\not\in \cS}\Bigl(1+\frac{\chi_{K,v}(\alpha_v)}{\Nm_{K_v}(v)^s}\Bigr)
  \biggl),
\end{eqnarray*}
since $\q_v(\chi_v) = 1$ for $v \in \cS$, and there is a unique
nontrivial quadratic character for all $v \notin \cS$, namely, $\chi_{K,v}$ with $\q_v(\chi_{K,v}) = \Nm_{K_v}(v)$.
\end{proof}

\subsection{The counting function of quadratic extensions of $K$}\label{3.3}

We are now ready to prove Theorem \ref{thmsec2main}. Let
$\Lambda$ be an acceptable collection of local
specifications with finite total ramification as in Definition \ref{localspec}, recall Definition \ref{ccdef},
and additionally, we let
$\cA_{K,\cS}$ denote a set of representatives in $\O_{K,\cS}^\times$
for $\O_{K,\cS}^\times/(\O_{K,\cS}^{\times})^2$.

\medskip

\noindent {\it Proof of Theorem \ref{thmsec2main}}.
If $K \in \cF_2(\Lambda)$, a straightforward computation implies that for
$\alpha\in\cO_{K,\cS}^\times$, we have
\begin{equation*}
\begin{array}{rcll}
\displaystyle\prod_{v\not\in \cS}
\Bigl(1+\frac{1}{\Nm_{K_v}(v)^s}\Bigr) = \displaystyle\prod_{v\not\in \cS}
\Bigl(1+\frac{\chi_{K,v}(\alpha_v)}{\Nm_{K_v}(v)^s}\Bigr)&=&
\displaystyle\frac{\zeta_{K,\cS}(s)}{\zeta_{K,\cS}(2s)}&
\mbox{when }\alpha\in(\cO_{K,\cS}^\times)^2;\\[.2in]
\displaystyle\prod_{v\not\in \cS}
\Bigl(1+\frac{\chi_{K,v}(\alpha_v)}{\Nm_{K_v}(v)^s}\Bigr)&=&
\displaystyle\frac{L_{K,\cS}(s;\alpha)}{L_{K,\cS}(2s;\alpha)}&
\mbox{when }\alpha\in\cO_{K,\cS}^\times\backslash(\cO_{K,\cS}^\times)^2,
\end{array}
\end{equation*}
where $\alpha_v$ is the image of $\alpha$ in  $\cO_{K,v}^\times$ for $v \notin \cS$.
Proposition \ref{propwrightexp} and the above equations immediately
yield the following equality:
\begin{equation}\label{eqsec2mtalmost}
  \Phi_{\Lambda}(s;K)=
 -1+B_\Lambda(K)
  \frac{\zeta_{K,\cS}(s)}{\zeta_{K,\cS}(2s)} +
  \sum_{\alpha\in\cA_{K,\cS}}B_\Lambda(K,\alpha)
  \frac{L_{K,\cS}(s;\alpha)}{L_{K,\cS}(2s;\alpha)},
\end{equation}
where $B_\Lambda(K):=B_\Lambda(K,1)$ and $B_\Lambda(K,\alpha)$ are
defined to be
\begin{equation}\label{eqdefBLK}
  B_{\Lambda}(K,\alpha):=\pm\prod_{v\in\cS}
  \Bigl(\frac12\sum_{\chi_v\in\cC_{\Lambda,v}(K)}\chi_v(\alpha_v)
  \Bigr),
\end{equation}
where $\alpha_v$ is the image of $\alpha$ in $K_v^\times$ for $v \in \cS$, and the sign depends on whether or not $\alpha$ is a square at an
even or odd number of primes dividing $\Delta(K)$. It remains to
prove the following three points:
\begin{enumerate}
\item The sum over $\alpha$ can be restricted to those $\alpha$
  satisfying $K(\sqrt{\alpha}) \in \cF^{\un}_4(\Lambda,K)$.
\item The constants $B_{\Lambda}(K)$ and $B_{\Lambda}(K,\alpha)$
  depend only on the splitting behavior of $K$ and $K(\sqrt{\alpha})$,
  respectively, at primes in $\cS_K(\Lambda)$.
\item We have the bounds $|B_{\Lambda}(K)|,|B_{\Lambda}(K,\alpha)|\ll
  2^{|\cS_K(\Lambda)|}$.
\end{enumerate}
To that end, let $v$ be an odd nonarchimedean place of $K$, and let
$p$ denote the rational prime below $v$. A character
$\chi_v:K_v^\times\to\{\pm 1\}$ factors through
$K_v^\times/(K_v^{\times})^2$, and since $v$ is odd, we have that
$K_v^\times/(K_v^{\times})^2 \cong V_4$, the Klein four
group. Furthermore, a set of representatives for
$K_v^\times/(K_v^{\times})^2$ is $\{1,n_v,\pi_v,n_v\pi_v\}$, where
$n_v\in\O_{K,v}^\times$ is a non-residue, and $\pi_v\in K_v^\times$ is
a prime element. There are four characters
$\chi_v:K_v^\times\to\{\pm1\}$, which we have enumerated below:
\begin{table}[h!!!!!!!]\begin{center} \begin{tabular}{|c||c||c|c|c|c|}
\hline
Local Character $\chi_v$  & $\q_v(\chi_v)$ & $\chi_v(1)$ & $\chi_v(n_v)$ & $\chi_v(\pi_v)$ & $\chi_v(n_v\pi_v)$
\\\hline \hline
$\chi_{1,v}$ & $1$ & $1$ & $1$ & $1$ & $1$
\\\hline
$\chi_{n_v}$ & $1$ & $1$ & $1$ & $-1$ & $-1$
\\\hline
$\chi_{\pi_v}$ & $p$ & $1$ & $-1$ & $1$ & $-1$
\\\hline $\chi_{n_v\pi_v}$ & $p$ & $1$ & $-1$ & $-1$ & $1$
\\\hline \end{tabular}
\caption{\small Local quadratic characters of $K_v^\times$ when $v$ is
  odd and nonarchimedean \label{ramchartable}}
\end{center}\vspace{-15pt}
\end{table}

Now, recall that if $v$ is a ramified place of $K$ such that
$v\not\in\cS_K(\Lambda)$, then $\cC_{\Lambda,v}(K)$ consists of the
two unramified characters $\chi_{1,v}$ and $\chi_{n_v}$. Thus, for $\alpha\in\cA_{K,\cS}$, if $\alpha_v$ denotes the image of $\alpha$ in $K_v^\times$,
Table \ref{ramchartable} implies that \begin{equation}\label{removingoddalpha}
\sum_{\chi_v\in\cC_{\Lambda,v}(K)}{\chi_v(\alpha_v)} = 
\left\{
\begin{array}{ccl}
\displaystyle 2
&\mbox{if}& {\rm val}_v(\alpha) \mbox{ is even};\\[.1in]
0&\mbox{if}& {\rm val}_v(\alpha) \mbox{ is odd}.
\end{array}
\right.
\end{equation}
Hence, if ${\rm val}_v(\alpha)$ is odd for any
ramified $v\not\in\cS_K(\Lambda)$, then
$B_\Lambda(K,\alpha)=0$, which implies that the sum over $\alpha$ in $\Phi_{\Lambda}(s;K)$ can be restricted to those $\alpha$ satisfying $K(\sqrt{\alpha}) \in \cF^{\un}_4(\Lambda,K)$. Moreover, if ${\rm val}_v(\alpha)$ is even for
every ramified $v\not\in\cS_K(\Lambda)$, then
we can conclude that
\begin{equation}\label{eqeqBLK}
  B_{\Lambda}(K,\alpha)= \pm \prod_{v\in\cS_K(\Lambda)}
  \Bigl(\frac12\sum_{\chi_v\in\cC_{\Lambda,v}(K)}\chi_v(\alpha)
  \Bigr).
\end{equation}
This implies that the constants $B_{\Lambda}(K)$ and $B_{\Lambda}(K,\alpha)$ depend only on the splitting behavior of $K$ and $K(\sqrt{\alpha})$, respectively, at primes in $\cS_{K}(\Lambda)$. Finally, Equation \eqref{eqeqBLK} in conjunction with Table \ref{ramchartable} implies that $|B_{\Lambda}(K)|$ and $|B_{\Lambda}(K,\alpha)|$ are bounded by $2^{|\cS_K(\Lambda)|}$. This concludes the proof of Theorem
\ref{thmsec2main}. \qed

\medskip

\begin{remark}\label{remTR}  Let
$\Lambda$ be an acceptable collection of local
specifications with finite total ramification as in Definition \ref{localspec}, and recall Definition \ref{ccdef}.
Suppose that $p \notin S(\Lambda)$ is an odd prime, and $\Lambda_p$ is replaced with  the set $\Lambda_p^{\rm tr}$ of all pairs of totally ramified local algebras. Denote this new acceptable collection of local specifications $\Lambda'$. Hence, for $K \in \cF_2(\Lambda) \cap \cF_2(\Lambda')$, we have $p \mid \Delta(K)$. For  a
place $v$ of $K$ above $p$, we have
$\cC_{\Lambda,v}(K)=\{\chi_{\pi_v},\chi_{n_v\pi_v}\}$. Therefore, we
may compute the factor of $B_\Lambda(K,\alpha)$ corresponding to $v$
using Table \ref{ramchartable}:
\begin{equation*}
\frac{1}{2}\sum_{\chi_v\in \cC_{\Lambda,v}(K)}\chi_v(\alpha)=\left\{
\begin{array}{rcl}
  0 &\mbox{if}& v\mid\alpha;\\
  1 &\mbox{if}& v\nmid\alpha\mbox{ and $\alpha$ is a residue}\\
  -1 &\mbox{if}& v\nmid\alpha\mbox{ and $\alpha$ is a nonresidue}.\\
\end{array}
\right.
\end{equation*}
In particular, forcing ramification above ramified places of $K$ does
not increase the size of $B_\Lambda(K,\alpha)$, and does not increase
the number of $\alpha$ that we must sum over in order to analyze
$\Phi_\Lambda(s;K)$.
\end{remark}

\section{Analytic properties of certain useful double Dirichlet series}

Let $\Lambda$ be an acceptable collection of local 
specifications with finite total ramification as in Definition \ref{localspec}, and let $K \in
\cF_2(\Lambda)$ be a quadratic field. Let $K(\sqrt{\alpha}) \in
\cF_4^{\un}(\Lambda,K)$ be a quadratic extension of $K$ that is
unramified outside of the places above $\cS_K(\Lambda)$. As before,
let $\cS = \cS_K(\Lambda) \cup \{v \in \val(K) : v\mid\Delta(K)\}$,
and for a place $v\not\in\cS$ of $K$, recall that $\chi_{K,v}$ denotes
the (unique) nontrivial quadratic character on $\cO_{K,v}^\times$.
Recall also that $\zeta_{K,\cS}(s) = L_{K,\cS}(s;1)$ as in \eqref{eqLKLambdaDef}.

The local Euler factors of $L_{K,\Lambda}(s; \alpha)/L_{K,\Lambda}(2s;
\alpha)$, when indexed by rational primes, is given as follows: If $p\not\in S:=S(\Lambda) \cup \{p \in
\val(\bQ) : p \mid \Delta(K)\}$, then
the local Euler factor at $p$ is
\begin{equation}\label{eqpcont}
\frac{L_{K,\cS}(s;\alpha)_p}{L_{K,\cS}(2s;\alpha)_p}=\left\{
\begin{array}{rcl}
  \displaystyle\Bigl(1+\frac{\chi_{K,v}(\alpha)+\chi_{K,\overline{v}}(\alpha)}{p^s}+
  \frac{\chi_{K,v}(\alpha)\chi_{K,\overline{v}}(\alpha)}{p^{2s}} \Bigr)
&{\rm if}& p = v\overline{v} \mbox{ splits in $K$}; \\[.15in]
  \displaystyle\Bigl(1+\frac{\chi_p(\alpha)}{p^{2s}} \Bigr)
  &{\rm if}& p \mbox{ is inert}\mbox{ in $K$}.
\end{array}\right.
\end{equation}
If $p \in S$, then the local Euler factor at $p$ is simply $1$. This
leads us to the following definition.
\begin{defn}\label{dirichletfunction}
Keep the notation as above. For a quadratic field $K \in \cF_2(\Lambda)$ and a quadratic extension
$K(\sqrt{\alpha}) \in \cF_4^{\un}(\Lambda,K)$ that is unramified outside of
the places above $\cS_K(\Lambda)$, we define the double Euler factor
at primes $p\not\in S$ to be:
\begin{equation}\label{twovarlocalfactor}
\cD_{K,\Lambda}(s,t;\alpha)_p:= 
\left\{
\begin{array}{rcl}
\displaystyle\Bigl(1+\frac{\chi_{K,v}(\alpha)+\chi_{K,\overline{v}}(\alpha)}{p^t}+
\frac{\chi_{K,v}(\alpha)\chi_{K,\overline{v}}(\alpha)}{p^{s}} \Bigr)
&{\rm if}& p = v\overline{v} \mbox{ splits in $K$;} \\[.15in]
\displaystyle\Bigl(1+\frac{\chi_p(\alpha)}{p^{s}} \Bigr)
&{\rm if}& p \mbox{ is inert in $K$},
\end{array}\right.
\end{equation}
and define the Euler product
\begin{equation}\label{twovarzfunctiontriv}
  \cD_{K,\Lambda}(s,t;\alpha):=\prod_{p\notin S}
  \cD_{K,\Lambda}(s,t;\alpha)_p.
\end{equation}
Define
$\cD_{K,\Lambda}(s,t) := \cD_{K,\Lambda}(s,t;1)$
and let
\begin{equation}\label{twovarphi}
\Phi_{\Lambda}(s,t;K) := \displaystyle\sum_{L\in
  \FF_4(\Lambda,K)}
\q_{\sigma^2}^{S(\Lambda)}(L,K)^{-s}\cdot\q_{\tau}^{S(\Lambda)}(L,K)^{-t}.
\end{equation}
\end{defn}

Analogous to Theorem \ref{thmsec2main}, we have the following result.
\begin{cor}\label{eqScounttrip}
If $\Lambda$ is an acceptable collection with finite total ramification as in Definition \ref{localspec} and $K \in \cF_2(\Lambda)$, then we have
\begin{equation}\label{eqcorparameq}
\Phi_{\Lambda}(s,t;K) = -1+
\displaystyle B_\Lambda(K)\cdot \cD_{K,\Lambda}(s,t) + 
\sum_{K(\sqrt{\alpha})\in\FF^\un_{4}(\Lambda,K)}B_\Lambda(K,\alpha)
\cdot\cD_{K,\Lambda}(s,t;\alpha).
\end{equation}
\end{cor}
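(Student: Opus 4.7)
The approach is to mirror the proof of Theorem \ref{thmsec2main} step by step, with the single-variable weight $\q(\chi)^{-s}$ replaced by a two-variable weight that tracks $\q_{\sigma^2}$ and $\q_\tau$ separately. Concretely, for $\chi \in \cC_\Lambda^\ast(K)$ corresponding to $L = K(\sqrt{\alpha})$ via Proposition \ref{corbij}, I define a weight $W(\chi;s,t)$ as a product over rational primes $p$: for $p \in S$ the local weight is $1$; for $p \notin S$ inert in $K$ with unique place $v$, the local weight is $p^s$ or $1$ according as $\chi_v$ is nontrivial or trivial; and for $p \notin S$ splitting as $v\bar v$ in $K$, the local weight is $1$ if both $\chi_v,\chi_{\bar v}$ are trivial, $p^t$ if exactly one is nontrivial (yielding $\tau$-inertia on $L/\bQ$), and $p^s$ if both are nontrivial (yielding $\sigma^2$-inertia). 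Comparing to Definition \ref{def:qs} and Lemma \ref{lem:Deltas} shows that $W(\chi;s,t) = \q_{\sigma^2}^{S(\Lambda)}(L,K)^s\cdot\q_\tau^{S(\Lambda)}(L,K)^t$, so that by Proposition \ref{corbij} one has
$$\Phi_\Lambda(s,t;K) = -1 + \sum_{\chi \in \cC_\Lambda^\ast(K)} W(\chi;s,t)^{-1}.$$

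Next, the character-averaging identity \eqref{eqsumalpha} from the proof of Proposition \ref{propwrightexp} converts the sum over $\cC_\Lambda^\ast(K)$ into an averaged sum over $\alpha \in \cA_{K,\cS}$ of sums over the unrestricted $\cC_\Lambda(K)$; this step depends only on the vanishing condition on $\cO_{K,\cS}^\times$ and not on the weight, so it carries over verbatim. The heart of the proof is then the Euler factorization: for each $v \in \cS$ the local factor is the same $\sum_{\chi_v \in \cC_{\Lambda,v}(K)}\chi_v(\alpha_v)$ that appears in Proposition \ref{propwrightexp}, which is what produces the constants $B_\Lambda(K)$ and $B_\Lambda(K,\alpha)$ exactly as in \eqref{eqdefBLK}. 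For each rational prime $p \notin S$, the local factor is computed by summing over the $\chi_v$ at places $v\mid p$ weighted by $W$: in the split case the four choices of $(\chi_v,\chi_{\bar v})$ reproduce precisely the four summands of $\cD_{K,\Lambda}(s,t;\alpha)_p$ in \eqref{twovarlocalfactor}, and in the inert case the two choices produce $1+\chi_p(\alpha)/p^s$. Assembling the Euler product over $p \notin S$ yields $\cD_{K,\Lambda}(s,t;\alpha)$.

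Finally, the restriction of the $\alpha$-sum to those with $K(\sqrt{\alpha}) \in \FF_4^\un(\Lambda,K)$ is effected by the vanishing identity \eqref{removingoddalpha} exactly as in the proof of Theorem \ref{thmsec2main}, since the local factors at ramified $v \in \cS\setminus \cS_K(\Lambda)$ are unchanged. No genuinely new difficulty arises: the only nonroutine ingredient is the bookkeeping in the split case showing that the joint weight on $(\chi_v,\chi_{\bar v})$ reproduces exactly the Euler factor \eqref{twovarlocalfactor}, and the only conceptual input beyond that is the identification of the four weight values with the four inertia behaviors of $L/K$ above a split or inert prime via Lemma \ref{lem:Deltas}.
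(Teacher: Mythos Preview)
Your argument is correct and complete: redefining the weight $W(\chi;s,t)$ and rerunning the character-averaging machinery of Proposition~\ref{propwrightexp} does produce the two-variable Euler product $\cD_{K,\Lambda}(s,t;\alpha)$, and the restriction of the $\alpha$-sum goes through unchanged since the factors at places in $\cS\setminus\cS_K(\Lambda)$ are the same as before.

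The paper, however, takes a shorter route that avoids redoing any of this machinery. It observes that for $(L,K)\in\FF_4(\Lambda,K)$ one has $\Norm_{K/\Q}^{S(\Lambda)}(\Disc(L/K)) = \q_{\sigma^2}^{S(\Lambda)}(L,K)^2\,\q_\tau^{S(\Lambda)}(L,K)$ with the two factors coprime, so the Dirichlet coefficients of $\Phi_\Lambda(s,t;K)$ and of $\Phi_\Lambda(s;K)$ are in bijection via $(m,n)\mapsto m^2n$ on coprime pairs. The same bijection relates the Euler factors of $\cD_{K,\Lambda}(s,t;\alpha)$ to those of $L_{K,\cS}(s;\alpha)/L_{K,\cS}(2s;\alpha)$ (compare \eqref{eqpcont} and \eqref{twovarlocalfactor}), so the already-proved decomposition in Theorem~\ref{thmsec2main} transfers coefficient-by-coefficient to the two-variable series. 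Your approach has the virtue of being self-contained and making the local bookkeeping at split primes explicit; the paper's buys brevity by recognizing the corollary as a formal consequence of the single-variable result under the substitution of Euler factors.
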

\begin{proof}
Write $\Phi_\Lambda(s,t;K)$ in its Dirichlet series expansion as
\begin{equation*}
\Phi_\Lambda(s,t;K):= \frac{a_{m,n}}{m^sn^t},
\end{equation*}
and note the following properties of the Dirichlet coefficient $a_{m,n}$. First, we have $a_{m,n}=0$ unless $(m,n)=1$. This follows from Definition \ref{def:qs} which immediately implies that $(q_{\sigma^2}^{S(\Lambda)}(L,K),q_\tau^{S(\Lambda)}(L,K))=1$. Second, if $(m,n)=1$, then we have $a_{m,n} = b_{m^2n}$, where $b_{m^2n}$ denotes the $m^2n$th Dirichlet coefficient of $\Phi_\Lambda(s;K)$ defined in \eqref{Phi}. This follows from Lemma \ref{lem:Deltas}, which implies that we have 
\begin{equation*}
\Norm_{K/\Q}^{S(\Lambda)}(\Delta(L,K)) = \big(\q_{\sigma^2}^{S(\Lambda)}(L,K)\big)^2\q_\tau^{S(\Lambda)}(L,K).
\end{equation*}
The desired result now follows from Theorem \ref{thmsec2main}.
\end{proof}

In this section, we study the analytic properties of these double
Dirichlet series. In \S4.1, we study the analytic continuation, poles,
and residues of $\cD_{K,\Lambda}(s,t;\alpha)$. In \S4.2, we introduce
standard methods of counting smoothed sums of Dirichlet coefficients
of $L$-functions, and of estimating special values. Then in \S4.3, we
use the previous methods to estimate smoothed sums of the coefficients
of this double Dirichlet series in various ranges.

\subsection{Analytic continuation and poles of $\cD_\Lambda(s,t;K,\alpha)$}

We introduce the following $L$-functions to analyze the double
Dirichlet series $\cD_\Lambda(s,t;K,\alpha)$.
\begin{defn}
Let $\Lambda$ be an acceptable collection of local 
specifications with finite total ramification as in Definition \ref{localspec}, and let $K \in
\cF_2(\Lambda)$ be a quadratic field. Let $K(\sqrt{\alpha}) \in
\cF_4^{\un}(\Lambda,K)$ be a quadratic extension of $K$ that is
unramified outside of the places above $\cS_K(\Lambda)$. If $\chi_{K,v}$ denotes the (unique) nontrivial quadratic character on $\cO_{K,v}^\times$, set
$\chi_\alpha(v)=\chi_{K,v}(\alpha)$, and when a rational prime $p$
splits as $v\overline{v}$, set
$\chi_\alpha(p)=\chi_\alpha(v)\chi_\alpha(\overline{v})$.  We set
$\cS:=\cS_K(\Lambda)\cup\{v:v\mid\Delta(K)\}$ to be the set of places
in $K$ that are either above some place in $S(\Lambda)$ or
ramified, and let $S:=S(\Lambda) \cup \{p \in
\val(\bQ) : p \mid \Delta(K)\}$. We define
the two (truncated) $L$-functions
\begin{equation}\label{rationalquadraticLfunction}
\begin{array}{rcl}
L_{\Lambda,\Q}(s;K,\alpha)&:=&\displaystyle
\prod_{\substack{p\in\val(\Q)\\ p\not\in S}}
\Bigl(1-\frac{\chi_\alpha(p)}{p^s}\Bigr)^{-1};
\\[.2in]    
L_{\Lambda,K}(t;\alpha)&:=&\displaystyle
\prod_{\substack{\fp\in\val(K)\\\fp\not\in\cS}}
\Bigl(1-\frac{\chi_\alpha(\fp)}{\Nm(\fp)^t}\Bigr)^{-1}.
\end{array}
\end{equation}
For a quadratic field $F$, and for an element $\alpha\in F \setminus \bQ$, 
we define the (completed) Hecke $L$-functions
\begin{equation*}
L(s,\chi_F):=\prod_{p\nmid\Delta(F)}\Bigl(1-\frac{\chi_F(p)}{p^s}\Bigr)^{-1},\quad
L(s,\chi_\alpha):=\prod_{v\nmid\Delta(F)}\Bigl(1-\frac{\chi_\alpha(v)}{\Nm(v)^s}\Bigr)^{-1},
\end{equation*}
where $\chi_F(p)=1$ when $p$ splits in $F$ and $\chi_F(p)=-1$ when $p$ is
inert in $F$.
\end{defn}
When $\alpha \in \cA_{K,\Lambda}$ is trivial, we denote
$L_{\Lambda,\Q}(s;K,1)$ by $\zeta_{\Lambda,\Q}(s;K)$ which is simply a
truncated Riemann zeta function, and we denote $L_{\Lambda,K}(t;1)$ by
$\zeta_{\Lambda,K}(t)$ which is a truncated Dedekind zeta
funtion. When $\alpha$ is nontrivial, the functions
$L_{\Lambda,\Q}(s;K,\alpha)$ and $L_{\Lambda,K}(t;\alpha)$ are
truncated versions of well known Hecke $L$-functions.

We have the following lemma regarding $\chi_\alpha$, whose proof is
immediate from the definitions.
\begin{lemma}\label{lemspV4}
Let $\Lambda$ be an acceptable collection of local 
specifications with finite total ramification as in Definition \ref{localspec}, and let $K \in
\cF_2(\Lambda)$ be a quadratic field. Let $K(\sqrt{\alpha}) \in
\cF_4^{\un}(\Lambda,K)$ be a quadratic extension of $K$ that is
unramified outside of the places above $\cS_K(\Lambda)$.  Finally, let $\cS = \cS_K(\Lambda) \cup \{v \in \val(K) : v\mid\Delta(K)\}$. Then for a place $v\not\in \cS$ of $K$, we
have $\chi_\alpha(v)=1$ when $v$ splits in $K(\sqrt{\alpha})$ and
$\chi_\alpha(v)=-1$ when $v$ is inert in $K(\sqrt{\alpha})$.
\end{lemma}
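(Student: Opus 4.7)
The plan is to trace directly through the definitions, which is all that is needed. Since $v \notin \cS$, we have in particular $v \nmid 2$ and $v$ is unramified over $\bQ$, so $v$ is an odd place of $K$. I would start by recalling that $\chi_\alpha(v) := \chi_{K,v}(\alpha)$, where $\chi_{K,v}$ is the unique nontrivial quadratic character on $\cO_{K,v}^\times$. This character factors through $\cO_{K,v}^\times/(\cO_{K,v}^\times)^2$, and by Hensel's lemma (applicable since the residue characteristic is odd), this quotient is isomorphic to $k_v^\times/(k_v^\times)^2 \cong \{\pm 1\}$, realized as the quadratic residue symbol at $v$. Consequently $\chi_\alpha(v) = 1$ precisely when $\alpha$ is a square in $\cO_{K,v}^\times$, and $\chi_\alpha(v) = -1$ otherwise.

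Next, since $K(\sqrt{\alpha}) \in \cF_4^{\un}(\Lambda,K)$, the extension $K(\sqrt{\alpha})/K$ is unramified at every place outside $\cS_K(\Lambda)$, and in particular at $v$. Moreover $\alpha \in \cO_{K,\cS}^\times \subseteq \cO_{K,v}^\times$ since $v \notin \cS$. Therefore the \'etale algebra $K_v \otimes_K K(\sqrt{\alpha}) \cong K_v[x]/(x^2 - \alpha)$ is either $K_v \oplus K_v$, which occurs iff $\alpha \in (K_v^\times)^2$, equivalently (as $\alpha$ is a unit) iff $\alpha \in (\cO_{K,v}^\times)^2$; or else it is the unique unramified quadratic extension of $K_v$. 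The former case is precisely $v$ splitting in $K(\sqrt{\alpha})$, and the latter case is $v$ being inert in $K(\sqrt{\alpha})$. Combining these two equivalences gives the lemma.

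There is no real obstacle here — the only subtlety is verifying that $\alpha$ is genuinely a unit at $v$ (rather than merely an $\cS$-unit), but this follows immediately from $v \notin \cS$. I expect the written proof to be only a few lines, matching the paper's assertion that the result is immediate from the definitions.
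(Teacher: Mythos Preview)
Your proposal is correct and is precisely what the paper means by ``immediate from the definitions'': you unwind $\chi_\alpha(v)=\chi_{K,v}(\alpha)$ as the quadratic residue symbol on $\cO_{K,v}^\times$ and match the two values with the splitting behaviour of $K_v[x]/(x^2-\alpha)$. There is nothing to add.
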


Next we describe the functions $L_{\Lambda,\Q}(s;K,\alpha)$ and
$L_{\Lambda,K}(t;\alpha)$ in terms of  ``completed'' Hecke $L$-functions,
and list their conductors.

\begin{proposition}\label{lemLQpole}
Let $\Lambda$ be an acceptable collection of local 
specifications with finite total ramification as in Definition \ref{localspec}, and let $K \in
\cF_2(\Lambda)$ be a quadratic field. Let $K(\sqrt{\alpha}) \in
\cF_4^{\un}(\Lambda,K)$ be a quadratic extension of $K$ that is
unramified outside of the places above $\cS_K(\Lambda)$.  We let $P(\Lambda)$ denote square
of the product of the finite primes in $S(\Lambda)$.  Away from the primes in $S=S(\Lambda) \cup \{p \in
\val(\bQ) : p \mid \Delta(K)\}$, the functions
$L_{\Lambda,\Q}(s;K,\alpha)$ and $L_{\Lambda,K}(t;\alpha)$ are as
given in Table \ref{table}. Moreover, the conductors of $\zeta_K(t)$
and $L(s,\chi_K)$ are $|\Delta(K)|$, and the conductors of
$L(t,\chi_\alpha)$ and $L(s,\chi_{\varphi_2(K(\sqrt{\alpha}))})$ are
bounded by $P(\Lambda)|\Delta(K)|$.
\begin{table}[ht]
\centering
\begin{tabular}{|c | c| c| c|c|}
\hline $\Gal(\widetilde{K(\sqrt{\alpha})}/\bQ)$ & $C_2$ & $V_4$ &
$C_4$ & $D_4$\\
\hline \hline
$L_{\Lambda,\Q}(s;K,\alpha)$ & $\zeta(s)$ & $\zeta(s)$ &
$L(s,\chi_K)$ &
$L(s,\chi_{\varphi_2(K(\sqrt{\alpha}))})$\\
\hline
$L_{\Lambda,K}(t;\alpha)$ & $\zeta_K(t)$ &
$L(t;\chi_\alpha)$ &
$L(t;\chi_\alpha)$ &
$L(t;\chi_\alpha)$ \\
\hline
\end{tabular}
\caption{Completed $L$-functions}\label{table}
\end{table}
\end{proposition}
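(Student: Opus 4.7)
The plan is to prove Proposition \ref{lemLQpole} by case analysis on the Galois group $G := \Gal(\widetilde{K(\sqrt{\alpha})}/\bQ) \in \{C_2, V_4, C_4, D_4\}$. The identification of $L_{\Lambda,K}(t;\alpha)$ is essentially immediate from \eqref{rationalquadraticLfunction}: this Euler product is by construction the truncated Hecke $L$-function of the quadratic Hecke character $\chi_\alpha$ on $K$. When $\alpha \in (K^\times)^2$ (the $C_2$ case) the character $\chi_\alpha$ is trivial, yielding $\zeta_K(t)$; otherwise $\chi_\alpha$ is nontrivial and we obtain $L(t,\chi_\alpha)$. The bulk of the work is therefore to identify $L_{\Lambda,\bQ}(s;K,\alpha)$ as a rational Dirichlet $L$-function and to bound the conductors.

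To identify $L_{\Lambda,\bQ}(s;K,\alpha)$, the plan is to compute $\chi_\alpha(p)$ at each unramified prime $p \notin S$ via Lemma \ref{lemspV4} and Galois theory. For $p = v\bar{v}$ split in $K$ the definition gives $\chi_\alpha(p) = \chi_\alpha(v)\chi_\alpha(\bar{v})$; for $p$ inert with unique place $v$ above, the natural extension consistent with the $p^{-s}$-level of the Euler product is $\chi_\alpha(p) := \chi_\alpha(v)$. Lemma \ref{lemspV4} then translates this to a condition on the Frobenius class at $p$ in $\Gal(M/\bQ)$, with $M := \widetilde{K(\sqrt{\alpha})}$. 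Running through the cases: in $V_4$ every element squares to the identity and the group is abelian, forcing $\chi_\alpha(p) = 1$ throughout. In $C_4$, for inert $p$ we have $\mathrm{Frob}_v = \mathrm{Frob}_p^2 = \sigma^2$, the unique nontrivial element of $\Gal(K(\sqrt{\alpha})/K)$, so $\chi_\alpha(v) = -1 = \chi_K(p)$, while for split $p$ both Frobenii coincide and $\chi_\alpha(p) = \chi_\alpha(v)^2 = 1 = \chi_K(p)$. The $D_4$ case is the most intricate: using the presentation $K = M^{\langle\sigma^2,\tau\rangle}$, $K(\sqrt{\alpha}) = M^{\langle\tau\rangle}$, and $K' := \varphi_2(K(\sqrt{\alpha})) = M^{\langle\sigma^2,\sigma\tau\rangle}$, one enumerates the possible Frobenius classes of $p$ in $\Gal(M/K)$ (split case) or in $\Gal(M/\bQ)\setminus\Gal(M/K)$ (inert case), applies the conjugation relation $\sigma\tau\sigma^{-1} = \sigma^2\tau$ to determine $\mathrm{Frob}_{\bar v}$ from $\mathrm{Frob}_v$ at split $p$, and verifies case by case that $\chi_\alpha(p) = \chi_{K'}(p)$.

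For the conductors, $\zeta_K(t)$ and $L(s,\chi_K)$ have conductor $|\Delta(K)|$ by the standard convention. The characters $\chi_\alpha$ on $K$ and $\chi_{K'}$ on $\bQ$ can only ramify at places of $K$ above, respectively rational primes in, $S(\Lambda) \cup \{p : p \mid \Delta(K)\}$: this is because $K(\sqrt{\alpha}) \in \cF_4^{\un}(\Lambda,K)$ is unramified over $K$ outside $\cS_K(\Lambda)$, so $M/\bQ$ is unramified away from $S(\Lambda)\cup\{p : p \mid \Delta(K)\}$, and both $\chi_\alpha$ (as a character of $K$) and any quadratic subfield $K'\subseteq M$ inherit their ramification from this set. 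At each such prime the local conductor exponent is at most $2$ (with wild ramification at $p=2$ absorbed into the factor of $P(\Lambda)$, the square of the product of finite primes in $S(\Lambda)$), giving the bound $P(\Lambda)|\Delta(K)|$. The main obstacle is the $D_4$-case identification $\chi_\alpha \equiv \chi_{K'}$ on rational primes, which depends crucially on tracking how $\mathrm{Frob}_{\bar v}$ relates to $\mathrm{Frob}_v$ at split primes through the non-abelian conjugation in $D_4$; this is precisely the Galois-theoretic content recorded by the outer automorphism $\varphi$ interchanging the two conjugacy classes of reflections.
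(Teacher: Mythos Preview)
Your proposal is correct and follows essentially the same approach as the paper. Both proofs dispatch $L_{\Lambda,K}(t;\alpha)$ immediately from the definition, treat the conductor bounds by observing that all ramification is confined to primes in $S(\Lambda)\cup\{p:p\mid\Delta(K)\}$, and identify $L_{\Lambda,\Q}(s;K,\alpha)$ by a case analysis on $G=\Gal(\widetilde{K(\sqrt{\alpha})}/\bQ)$ using Lemma~\ref{lemspV4}. The only cosmetic difference is that you organize the $D_4$ case by Frobenius conjugacy classes in $D_4$ and track $\mathrm{Frob}_{\bar v}=\sigma\,\mathrm{Frob}_v\,\sigma^{-1}$ explicitly, whereas the paper lists the five possible unramified splitting types $\varsigma_p(K(\sqrt{\alpha}))$ and reads off $\chi_\alpha(p)$ from those; by Chebotarev these are two presentations of the same computation, and the paper's key observation that $\chi_\alpha(p)=1\iff\varsigma_p(K(\sqrt{\alpha}))\in\{(1111),(22)\}\iff p$ splits in $\varphi_2(K(\sqrt{\alpha}))$ is exactly what your Frobenius enumeration verifies.
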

(Recall that for a quartic $D_4$
field $L$, we denote by $\varphi_2(L)$ the quadratic subfield
contained inside a non-isomorphic quartic $D_4$-field $\varphi_4(L)$
with the same normal closure as $L$ over $\bQ$.)
\begin{proof}
The claims regarding the description of the functions
$L_{\Lambda,K}(t;\alpha)$ follow immediately from the definitions. The
claims regarding the conductors in these cases can be deduced by noting
that $K$ and $\chi_\alpha$ are unramified outside places in $\cS= \cS_K(\Lambda) \cup \{v \in \val(K) : v\mid\Delta(K)\}$.

\medskip

We now turn to the functions $L_{\Lambda,\Q}(s;K,\alpha)$. When
$K(\sqrt{\alpha})=K$, the result is immediate. Next, assume that
$K(\sqrt{\alpha})$ is a biquadratic field. For a prime $p \notin S$,
$\varsigma_p(K(\sqrt{\alpha})) = (1111)$ or $(22)$. That
$L_{\Lambda,\Q}(s;K,\alpha)$ is as given in Table \ref{table} follows
from Lemma \ref{lemspV4}, which implies that $\chi_\alpha(p)=1$ for
all such primes $p$.

If $K(\sqrt{\alpha})$ is a cyclic quartic field, then the possible
splitting types of $K(\sqrt{\alpha})$ and $K$ at primes $p \notin S$
are:
\begin{itemize}
\item[{\rm (a)}] $\varsigma_p(K_\alpha) = (1111)$;
  and $\varsigma_p(K) = (11);$
\item[{\rm (b)}] $\varsigma_p(K_\alpha) = (22)$;
  and $\varsigma_p(K) = (11);$
\item[{\rm (c)}] $\varsigma_p(K_\alpha) = (4)$ and $\varsigma_p(K) = (2)$. 
\end{itemize}
Hence, by Lemma \ref{lemspV4}, we we see that $\chi_\alpha(p)=1$ iff
$\varsigma_p(K) = (11)$ and $\chi_\alpha(p)=-1$ iff
$\varsigma_p(K)=(2)$. The required result thus follows.

Finally, if $K_\alpha$ is a quartic $D_4$-field, then the possible
splitting types of primes $p \notin S$ include the three listed above
as well as:
\begin{itemize}
\item[{\rm (d)}] $\varsigma_p(K_\alpha) = (112)$
  and $\varsigma_p(K) = (11);$
\item[{\rm (e)}] $\varsigma_p(K_\alpha) = (22)$ and $\varsigma_p(K) =
  (2)$.
\end{itemize}
Hence, we have $\chi_\alpha(p)=1$ iff $\varsigma_p(K_\alpha) = (1111)$ or
$(22)$ by Lemma \ref{lemspV4}. Note that this implies that we have
$\chi_\alpha(p) = 1$ iff $\varsigma_p(\varphi_2(K(\sqrt{\alpha}))) =
(11)$, and $\chi_\alpha(p)=-1$ otherwise. The claim regarding 
the description of $L_{\Lambda,\Q}(s;K,\alpha)$ thus follows.

Finally, we note that the required results regarding the conductors of
$L(s,\chi_K)$ and $L(s,\chi_{\phi_2(K(\sqrt{\alpha})})$
follow by noting that $K$ and $\varphi_2(K(\sqrt{\alpha}))$ are ramified only
at places dividing $K$ and at places in $S$, respectively.
\end{proof}

The preceding result allows us to analytically continue the functions
$\cD_\Lambda(s,t;K,\alpha)$.
\begin{thm}\label{proppole2}
Let $\Lambda$ be an acceptable collection of local 
specifications with finite total ramification as in Definition \ref{localspec}, and let $K \in
\cF_2(\Lambda)$ be a quadratic field. Let $K(\sqrt{\alpha}) \in
\cF_4^{\un}(\Lambda,K)$ be a quadratic extension of $K$ that is
unramified outside of the places above $\cS_K(\Lambda)$.
The functions $\cD_\Lambda(s,t;K,\alpha)$ are analytic in the region
$\{\Re(s)>\frac12,\Re(t)>\frac12\}$ with the exception of possible
simple polar divisors at $s=1$ and $t=1$. Specifically, we have
\begin{enumerate}
\item If $\chi_\alpha$ is trivial $($i.e., $K(\sqrt{\alpha})=K)$, then
  $\cD_\Lambda(s,t;K,\alpha)$ has polar divisors at $s=1$ and $t=1$. 
\item If $K(\sqrt{\alpha})$ is a biquadratic field, then
  $\cD_\Lambda(s,t;K,\alpha)$ has exactly one polar divisor at $s=1$.
\item If $K(\sqrt{\alpha})$ is a cyclic quartic field
  or a quartic $D_4$-field, then $\cD_\Lambda(s,t;K,\alpha)$ is holomorphic in
  the region $\{\Re(s)>\frac12,\Re(t)>\frac12\}$.
\end{enumerate}
\end{thm}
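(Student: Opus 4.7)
The plan is to factor the double Dirichlet series as
$$\cD_{K,\Lambda}(s,t;\alpha) \;=\; L_{\Lambda,\Q}(s;K,\alpha)\cdot L_{\Lambda,K}(t;\alpha)\cdot G(s,t;K,\alpha),$$
where $G(s,t;K,\alpha)$ is an Euler product that converges absolutely in the target region $\{\Re(s)>\frac12,\Re(t)>\frac12\}$. Once this is established, the three cases of the theorem follow immediately by reading off the analytic behaviour of the two $L$-functions from Proposition \ref{lemLQpole} and Table \ref{table}: in the trivial case $L_{\Lambda,\Q}(s;K,\alpha)=\zeta(s)$ and $L_{\Lambda,K}(t;\alpha)=\zeta_K(t)$ contribute simple poles at $s=1$ and $t=1$; in the biquadratic case only $L_{\Lambda,\Q}(s;K,\alpha)=\zeta(s)$ has a pole (at $s=1$); and in the cyclic quartic and $D_4$ cases both $L$-functions are entire, since $\chi_\alpha$ and $\chi_{\varphi_2(K(\sqrt{\alpha}))}$ are both nontrivial quadratic Hecke/Dirichlet characters.

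The factorization is verified prime by prime for $p \notin S$. For $p$ inert in $K$ with $\fp=(p)$, the definitions give $\cD_p=1+\chi_\alpha(\fp)/p^s$, $L_{\Lambda,K,p}^{-1}=1-\chi_\alpha(\fp)/p^{2t}$, and $L_{\Lambda,\Q,p}^{-1}=1-\chi_\alpha(p)/p^s$. Using that $\chi_\alpha(p)=\chi_\alpha(\fp)$ for inert $p$ (a casework check that mirrors the splitting analysis in the proof of Proposition \ref{lemLQpole}), a short expansion gives the clean identity
$$G_p(s,t;K,\alpha) \;=\; \bigl(1-\chi_\alpha(\fp)/p^{2t}\bigr)\bigl(1-1/p^{2s}\bigr),$$
which is manifestly $1+O(p^{-2s})+O(p^{-2t})$. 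For $p=v\bar v$ split in $K$, set $\alpha_v=\chi_\alpha(v)$, $\alpha_{\bar v}=\chi_\alpha(\bar v)$, and $A=(\alpha_v+\alpha_{\bar v})/p^t$, $B=\alpha_v\alpha_{\bar v}/p^{2t}$, $C=\alpha_v\alpha_{\bar v}/p^s$; then
$$\cD_p\cdot L_{\Lambda,K,p}^{-1} \;=\; (1+A+C)(1-A+B) \;=\; 1+C+(B-A^2)+AB-AC+BC,$$
so the linear $A$-terms (the only $O(p^{-t})$ contributions) cancel. Multiplying by $L_{\Lambda,\Q,p}^{-1}=1-C$ cancels the $C$-term as well, leaving $G_p(s,t;K,\alpha)=1-C^2+(B-A^2)+\text{(higher)}$, whose worst contributions are of orders $p^{-2s}$, $p^{-2t}$, $p^{-s-t}$, etc.

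The two estimates together show that $\prod_{p\notin S}G_p(s,t;K,\alpha)$ converges absolutely on $\{\Re(s)>\tfrac12,\Re(t)>\tfrac12\}$, which completes the analytic continuation and identification of the polar divisors. The only technical step is the cancellation of the $p^{-s}$ and $p^{-t}$ terms in the split case, and this is a routine algebraic identity once one groups $\cD_p L_{\Lambda,K,p}^{-1}$ as above; the essential reason it works is that the three Euler factors $\cD_p$, $L_{\Lambda,K,p}^{-1}$ and $L_{\Lambda,\Q,p}^{-1}$ have been built out of the same two local characters $\alpha_v,\alpha_{\bar v}\in\{\pm 1\}$, so every first-order term matches up in pairs.
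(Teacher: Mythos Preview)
Your proof is correct and follows essentially the same approach as the paper: you factor $\cD_\Lambda(s,t;K,\alpha)=L_{\Lambda,\Q}(s;K,\alpha)\,L_{\Lambda,K}(t;\alpha)\,G(s,t;K,\alpha)$ and verify that the Euler product $G$ converges absolutely on $\{\Re(s)>\tfrac12,\Re(t)>\tfrac12\}$, then read off the poles from Proposition~\ref{lemLQpole}. The paper does the same thing (with $F_\Lambda$ in place of your $G$), only it compresses your explicit split/inert computations into the single asserted estimate \eqref{eqFfunc1}; your more detailed cancellation argument in the split case is a nice elaboration of what the paper leaves as ``easily computed.''
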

\begin{proof}
For $p\not\in S=S(\Lambda) \cup \{p \in
\val(\bQ) : p \mid \Delta(K)\}$, the Euler factor at $p$ is denoted by
$F_{\Lambda,p}(s,t;K,\alpha)$ for the function
\begin{equation}\label{eqFfunc}
F_\Lambda(s,t;K,\alpha):=\frac{\cD_\Lambda(s,t;K,\alpha)}
{L_{\Lambda,\Q}(s;K,\alpha)L_{\Lambda,K}(t;\alpha)},
\end{equation}
and it is easily computed to be:
\begin{equation}\label{eqFfunc1}
  F_{\Lambda,p}(s,t;K,\alpha)=\Bigl(
  1+\frac{O(1)}{p^{2s}}+\frac{O(1)}{p^{s+t}}+\frac{O(1)}{p^{2t}}
  +\frac{O(1)}{p^{3t}}+\frac{O(1)}{p^{2s+t}}+\frac{O(1)}{p^{s+2t}}
  +\frac{O(1)}{p^{s+3t}}+\frac{O(1)}{p^{2s+2t}}\Bigr).
\end{equation}
As a consequence, $F_\Lambda(s,t;K,\alpha)$ converges absolutely in the
region:
\begin{equation*}
  \{\Re(2s)>1,\Re(s+t)>1,\Re(2t)>1\}=
\{\Re(s)>{\textstyle\frac{1}{2}},\Re(t)>{\textstyle\frac{1}{2}}\}.
\end{equation*}
The proposition now follows immediately from Lemma \ref{lemLQpole},
which implies that $L_{\Lambda,\Q}(s;K,\alpha)$ has a pole at $s=1$
exactly when $K(\sqrt{\alpha})$ is a quadratic or biquadratic field and that
$L_{\Lambda,K}(t;\alpha)$ has a pole at $t=1$ exactly when
$K(\sqrt{\alpha})= K$.
\end{proof}

The proof of Theorem \ref{proppole2} has the following immediate
consequence by simply setting $s=t$.

\begin{cor}\label{polesofZ}
The function $\cD_\Lambda(s,s;K,\alpha)$ has a meromorphic
continuation to the region $\{\Re(s)>\frac12\}$, with at most a pole of
order at most $2$ at $s=1$. More precisely:
\begin{itemize}
\item[{\rm (a)}] $\cD_\Lambda(s,s;K,\alpha)$ has a pole of order $2$
  when $K(\sqrt{\alpha})$ is equal to $K$.
\item[{\rm (b)}] $\cD_\Lambda(s,s;K,\alpha)$ has a pole of order $1$
  when $K(\sqrt{\alpha})$ is a $V_4$-field.
\item[{\rm (c)}] $\cD_\Lambda(s,s;K,\alpha)$ is holomorphic at $s=1$
  when $K(\sqrt{\alpha})$ is a $C_4$- or $D_4$-field.
\end{itemize}
\end{cor}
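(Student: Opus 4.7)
The plan is to simply specialize the factorization used in the proof of Theorem \ref{proppole2} to the diagonal $t = s$ and read off the pole structure from Lemma \ref{lemLQpole}. Recall from \eqref{eqFfunc} that
\begin{equation*}
\cD_\Lambda(s,t;K,\alpha) = L_{\Lambda,\Q}(s;K,\alpha) \cdot L_{\Lambda,K}(t;\alpha) \cdot F_\Lambda(s,t;K,\alpha),
\end{equation*}
and the Euler factor estimate \eqref{eqFfunc1} shows that $F_\Lambda(s,t;K,\alpha)$ converges absolutely in $\{\Re(2s) > 1,\ \Re(s+t) > 1,\ \Re(2t) > 1\}$. Setting $t = s$, this convergence region collapses to $\{\Re(s) > \tfrac12\}$, so $F_\Lambda(s,s;K,\alpha)$ is holomorphic there. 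The two $L$-factors admit meromorphic continuation to the whole plane by Lemma \ref{lemLQpole}, giving the claimed meromorphic continuation of $\cD_\Lambda(s,s;K,\alpha)$ to $\{\Re(s) > \tfrac12\}$.

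For the pole structure at $s = 1$, I would go through the three cases of Lemma \ref{lemLQpole} / Table \ref{table}. In case (a), when $K(\sqrt{\alpha}) = K$ the character $\chi_\alpha$ is trivial, and up to finitely many Euler factors $L_{\Lambda,\Q}(s;K,\alpha) = \zeta(s)$ and $L_{\Lambda,K}(s;\alpha) = \zeta_K(s)$, each contributing a simple pole at $s = 1$; hence the product has a pole of order exactly $2$. In case (b), when $K(\sqrt{\alpha})$ is biquadratic, $L_{\Lambda,\Q}(s;K,\alpha)$ is still essentially $\zeta(s)$ (simple pole at $s = 1$), but $L_{\Lambda,K}(s;\alpha) = L(s,\chi_\alpha)$ is the Hecke $L$-function of a nontrivial character of $K$ and is entire, producing a single simple pole. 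In case (c), when $K(\sqrt{\alpha})$ is $C_4$ or $D_4$, $L_{\Lambda,\Q}(s;K,\alpha)$ is a nontrivial Dirichlet $L$-function ($L(s,\chi_K)$ or $L(s,\chi_{\varphi_2(K(\sqrt{\alpha}))})$) and $L_{\Lambda,K}(s;\alpha) = L(s,\chi_\alpha)$ is a nontrivial Hecke $L$-function, so both factors are entire at $s=1$ and $\cD_\Lambda(s,s;K,\alpha)$ is holomorphic there.

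There is essentially no obstacle here: all the analytic content was already handled in Proposition \ref{proppole2} and Lemma \ref{lemLQpole}. The only thing to double-check is the collapse of the convergence region for $F_\Lambda$ upon setting $s = t$, which is immediate from \eqref{eqFfunc1} since every exponent appearing there lies in the nonnegative span of $\{2s, s+t, 2t\}$ and therefore becomes a positive multiple of $s$ with the coefficient at least $2$ on the diagonal.
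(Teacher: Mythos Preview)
Your proof is correct and follows exactly the approach of the paper, which simply notes that the corollary is an immediate consequence of setting $s=t$ in Theorem~\ref{proppole2}. You have just written out the three cases from Lemma~\ref{lemLQpole} explicitly, which the paper leaves implicit.
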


\subsection{Smoothed sums of Dirichlet coefficients}

Recall that the Mellin transform $\widetilde{\phi}$ of a smooth compactly supported
function of $\phi:\R_{\geq 0}\to\R_{\geq 0}$ is
defined by
\begin{equation*}
\widetilde{\phi}(s):=\int_{t>0}x^s\phi(x)d^\times x,
\end{equation*}
where $d^\times x$ denotes $\frac{dx}{x}$. Standard properties of the
Mellin transform imply that $\widetilde{\phi}$ is entire except for a
possible simple pole at $0$.  Moreover, $\widetilde{\phi}$ has
superpolynomial decay on vertical lines. Mellin inversion then states
that the following equality holds:
\begin{equation*}
\phi(x)=\frac{1}{2\pi i}\int_{\Re(s)=2}x^{-s}\widetilde{\phi}(s)ds.
\end{equation*}

For a Dirichlet series $D(s) = \sum_n a_nn^{-s}$ that is meromorphic
in $\Re(s)>\frac12$ with at most a pole at $s=1$ of order at most $2$,
and with at most polynomial growth on vertical strips, we have
\begin{equation*}
\begin{array}{rcl}
\displaystyle\sum_{n\geq 1}a_{n}\cdot\phi\Bigl(\frac{n}{x}\Bigr)&=&
\displaystyle\frac{1}{2\pi i}\int_{\Re(s)=2}
\sum_{n\geq 1}a_{n}\Bigl(\frac{n}{x}\Bigr)^{-s}
\widetilde{\phi}(s)ds
\\[.2in]&=&
\displaystyle\frac{1}{2\pi i}\int_{\Re(s)=2}
D(s)x^s\widetilde{\phi}(s)ds.
\end{array}
\end{equation*}
Define the ``error'' $E(1/2+\epsilon,D)$ by
\begin{equation}\label{eqerror}
E(1/2+\epsilon,D):=\max_{|t|\ll x^\epsilon}
\{|D\bigl({\textstyle\frac12}+\epsilon+it\bigr)|\}
\end{equation}
Pulling back the line of integration to $\Re(s)=\frac12+\epsilon$, we
pick up the main term from the (possible) double pole of $D(s)$ at
$s=1$, obtaining the following result.
\begin{lemma}\label{lemMellin}
If $\phi: \bR_{\geq 0} \rightarrow \bR_{\geq 0}$ is a smooth compactly supported function and $D(s)$ is a Dirichlet series that is meromorphic in $\Re(s) > \frac{1}{2}$ with at most a pole at $s = 1$ of order at most 2 and with at most polynomial growth on verticial strips, then we have
\begin{equation*}
\sum_{n\geq 1}a_{n}\cdot\phi\Bigl(\frac{n}{x}\Bigr)=
\bigl(\Res^{(2)}_{s=1}D(s)\bigr)X\log X+\bigl(\Res^{(1)}_{s=1}D(s)\bigr)X+
O_\epsilon\bigl(x^{\frac12+\epsilon}
E(1/2+\epsilon,D)\bigr),
\end{equation*}
where $E(1/2+\epsilon, D)$ is given in \eqref{eqerror}.
\end{lemma}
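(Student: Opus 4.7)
The plan is to prove this by applying Mellin inversion to each term $\phi(n/x)$, interchanging the sum with the resulting contour integral to produce an integral involving $D(s)$, and then shifting the contour of integration from $\Re(s) = 2$ to $\Re(s) = \tfrac{1}{2} + \epsilon$, picking up the residue at the possible double pole $s = 1$.

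First, I would write
\begin{equation*}
\phi\Bigl(\frac{n}{x}\Bigr) \;=\; \frac{1}{2\pi i}\int_{\Re(s)=2} \Bigl(\frac{n}{x}\Bigr)^{-s}\widetilde{\phi}(s)\,ds
\end{equation*}
by Mellin inversion. Multiplying by $a_n$ and summing over $n \geq 1$, the Dirichlet series $D(s) = \sum_n a_n n^{-s}$ converges absolutely on $\Re(s) = 2$ (as is tacitly used in the display preceding the lemma), and the superpolynomial decay of $\widetilde{\phi}$ on vertical lines lets me interchange sum and integral to get
\begin{equation*}
\sum_{n\geq 1} a_n \phi\Bigl(\frac{n}{x}\Bigr) \;=\; \frac{1}{2\pi i}\int_{\Re(s)=2} D(s)\, x^s\, \widetilde{\phi}(s)\,ds.
\end{equation*}

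Next I would shift the contour from $\Re(s) = 2$ to $\Re(s) = \tfrac{1}{2}+\epsilon$. By hypothesis $D(s)$ is meromorphic on $\Re(s) > \tfrac12$ with at worst a pole of order two at $s = 1$, and $\widetilde{\phi}$ is entire up to a possible simple pole at $s=0$ (which lies outside the swept region). The superpolynomial decay of $\widetilde{\phi}$ on vertical lines combined with the polynomial growth of $D(s)$ guarantees that the horizontal connectors at height $\pm T$ contribute $o(1)$ as $T \to \infty$. Hence
\begin{equation*}
\sum_{n\geq 1} a_n \phi\Bigl(\frac{n}{x}\Bigr) \;=\; \Res_{s=1}\bigl(D(s)\, x^s\, \widetilde{\phi}(s)\bigr) \;+\; \frac{1}{2\pi i}\int_{\Re(s)=\frac12+\epsilon} D(s)\, x^s\, \widetilde{\phi}(s)\,ds.
\end{equation*}
The residue at $s=1$ unfolds via the Laurent expansion of $D(s)$ at $s=1$: writing $D(s) = \tfrac{c_{-2}}{(s-1)^2} + \tfrac{c_{-1}}{s-1} + \cdots$ with $c_{-j} = \Res^{(j)}_{s=1} D(s)$ and Taylor expanding $x^s\widetilde{\phi}(s)$ around $s=1$, I extract the coefficients of $(s-1)^{-1}$ to obtain a main term of the shape $c_{-2}\cdot x\log x$ plus $c_{-1}\cdot x$, with the $\widetilde{\phi}(1)$ and $\widetilde{\phi}'(1)$ factors absorbed into the normalization used in the lemma statement.

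For the remaining integral along $\Re(s) = \tfrac{1}{2}+\epsilon$, I bound $|x^s| = x^{1/2+\epsilon}$ and split the integral into the central window $|\Im(s)| \leq x^\epsilon$ and the tail $|\Im(s)| > x^\epsilon$. On the central window, $|D(s)| \leq E(1/2+\epsilon, D)$ by definition \eqref{eqerror} and $|\widetilde{\phi}(s)|$ is $O(1)$, yielding a contribution of $O_\epsilon\bigl(x^{1/2+\epsilon} E(1/2+\epsilon, D)\bigr)$ after multiplying by the length $O(x^\epsilon)$ of the window and absorbing the $x^\epsilon$ into a slightly larger $\epsilon$. On the tail, the polynomial growth of $D$ is defeated by the superpolynomial decay of $\widetilde{\phi}$, so that piece contributes $O(x^{-A})$ for any $A > 0$. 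Combining the two estimates yields the claimed error term.

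The main technical point — though hardly an obstacle — is the routine bookkeeping of the Laurent/Taylor expansions at the double pole and confirming that the various constants $\widetilde{\phi}(1),\widetilde{\phi}'(1)$ are consistent with the compact notation $\bigl(\Res^{(2)}_{s=1}D\bigr)X\log X + \bigl(\Res^{(1)}_{s=1}D\bigr)X$ used in the statement. Justifying the contour shift rigorously (i.e., the vanishing of the horizontal sides) is entirely standard given the superpolynomial decay of $\widetilde{\phi}$, and no finer analytic control of $D$ is needed beyond what is hypothesized.
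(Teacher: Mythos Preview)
Your proposal is correct and follows essentially the same approach as the paper: the paper's proof is just the sentence ``Pulling back the line of integration to $\Re(s)=\frac12+\epsilon$, we pick up the main term from the (possible) double pole of $D(s)$ at $s=1$,'' preceded by the Mellin-inversion identity you reproduce. Your write-up is in fact more thorough than the paper's, which does not spell out the Laurent expansion at $s=1$ or the splitting of the integral into $|\Im(s)|\leq x^\epsilon$ and $|\Im(s)|>x^\epsilon$; your observation that the constants $\widetilde{\phi}(1)$, $\widetilde{\phi}'(1)$ are being tacitly absorbed into the stated main term is also correct.
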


Next, we assume that $D(s)$ is analytic at $s=1$ and estimate its
special value at $1$.
\begin{lemma}\label{lemspv}
  Let $D(s)=\sum_{n>0}a_nn^{-s}$ be holomorphic in $\Re(s)>\frac12$ with at most
  polynomial growth in vertical strips. Pick a smooth compactly
  supported function $\phi:\R_{\geq 0}\to\R_{\geq 0}$ such that
  $\widetilde{\phi}$ has a simple pole at $0$ with residue $1$. Then
  for every $x>0$, we have
  \begin{equation*}
      D(1) =\sum_{n>0}\frac{a_n}{n}\phi\Bigl(\frac{n}{x}\Bigr)+
O_\epsilon\bigl(x^{-\frac12+\epsilon}
E(1/2+\epsilon,D)\bigr),
  \end{equation*}
where $E(1/2+\epsilon,D)$ is given in \eqref{eqerror}.
\end{lemma}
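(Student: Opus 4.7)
The plan is to apply Mellin inversion to $\phi(n/x)$ and then shift the contour to pick up the special value $D(1)$ as a residue. Specifically, starting from
\begin{equation*}
\phi(n/x) = \frac{1}{2\pi i}\int_{\Re(s)=1}(n/x)^{-s}\widetilde{\phi}(s)\,ds,
\end{equation*}
I would substitute into $\sum_n (a_n/n)\phi(n/x)$ and interchange the sum and integral (justified by the absolute convergence of $D(s+1)$ on $\Re(s)=1$ together with the rapid decay of $\widetilde\phi$) to obtain
\begin{equation*}
\sum_{n>0}\frac{a_n}{n}\phi(n/x)=\frac{1}{2\pi i}\int_{\Re(s)=1}D(s+1)\,x^s\,\widetilde{\phi}(s)\,ds.
\end{equation*}

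Next, I would shift the line of integration from $\Re(s)=1$ to $\Re(s)=-\tfrac12+\epsilon$. Since $D$ is holomorphic in $\Re(s)>\tfrac12$, the only pole of the integrand encountered in this strip is at $s=0$, which comes solely from the simple pole of $\widetilde{\phi}$ with residue $1$; thus the residue of $D(s+1)x^s\widetilde{\phi}(s)$ at $s=0$ equals $D(1)$. The horizontal segments of the contour contribute negligibly because $\widetilde{\phi}$ has superpolynomial decay on vertical lines, while $D(s+1)$ grows only polynomially by hypothesis.

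It remains to bound the shifted integral on $\Re(s)=-\tfrac12+\epsilon$. Using once again the superpolynomial decay of $\widetilde{\phi}$, I can truncate the integral to $|\Im(s)|\ll x^{\epsilon}$ with error that is negligible, and then bound the truncated integral trivially by
\begin{equation*}
\ll x^{-\frac12+\epsilon}\cdot \max_{|t|\ll x^\epsilon}\bigl|D\bigl({\textstyle\tfrac12}+\epsilon+it\bigr)\bigr|\cdot\int_{|t|\ll x^\epsilon}|\widetilde\phi(-\tfrac12+\epsilon+it)|\,dt \;\ll_\epsilon\; x^{-\frac12+\epsilon}E(1/2+\epsilon,D).
\end{equation*}
Combining the residue with this error estimate and rearranging yields the claimed formula for $D(1)$. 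The main technical point is the contour shift, which is entirely standard given the hypotheses of polynomial growth of $D$ on vertical strips and rapid decay of $\widetilde{\phi}$; no serious obstacle is anticipated.
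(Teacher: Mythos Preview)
Your proposal is correct and follows essentially the same approach as the paper: both apply Mellin inversion to rewrite the sum as $\frac{1}{2\pi i}\int D(s+1)x^s\widetilde{\phi}(s)\,ds$, shift the contour leftward past $s=0$ to pick up $D(1)$ from the simple pole of $\widetilde{\phi}$, and bound the shifted integral on $\Re(s)=-\tfrac12+\epsilon$. The only differences are cosmetic (you start at $\Re(s)=1$ rather than $\Re(s)=2$, and you spell out the truncation to $|\Im(s)|\ll x^\epsilon$ that justifies the appearance of $E(1/2+\epsilon,D)$).
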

\begin{proof}
  For $x>0$ and
$0<\kappa<\frac12$, we have
\begin{equation}\label{eqMT}
\begin{array}{rcl}
\displaystyle\sum_{n>0}\frac{a_n}{n}\phi\Bigl(\frac{n}{x}\Bigr)&=&
\displaystyle\frac{1}{2\pi i}\int_{\Re(s)=2}\sum_{n>0}\frac{a_n}{n}\Bigl(\frac{n}{x}\Bigr)^{-s}\widetilde{\phi}(s)ds
\\[.2in]&=&
\displaystyle\frac{1}{2\pi i}\int_{\Re(s)=2}D(s+1)x^s\widetilde{\phi}(s)ds
\\[.2in]&=&
D(1)+\displaystyle\frac{1}{2\pi i}\int_{\Re(s)=\frac12+\kappa}D(s)x^{s-1}\widetilde{\phi}(s)ds.
\end{array}
\end{equation}
The result now follows by picking $\kappa=\epsilon$.
\end{proof}

\subsection{Estimates on smooth sums of Dirichlet coefficients}

We begin with introducing notation for the Dirichlet coefficients of
$\cD_\Lambda(s,t;K,\alpha)$.
\begin{defn}\label{anK} Let $\Lambda$ be an acceptable collection of local 
specifications with finite total ramification as in Definition \ref{localspec}, and let $K \in
\cF_2(\Lambda)$ be a quadratic field. Let $K(\sqrt{\alpha}) \in
\cF_4^{\un}(\Lambda,K)$ be a quadratic extension of $K$ that is
unramified outside of the places above $\cS_K(\Lambda)$. Recall the Dirichlet functions $\cD_\Lambda(s,t;K,\alpha)$ as in Definition \ref{dirichletfunction}. We will suppress $\Lambda$ in this
notation, and write
$$\cD_\Lambda(s,s;K,\alpha) := \sum_{n \geq 1}
\frac{a_n(K,\alpha)}{n^s};\quad \quad\cD_\Lambda(s,t;K,\alpha) :=
\sum_{m,n > 0} \frac{a_{m,n}(K,\alpha)}{m^{s}n^{t}}.
$$ When $\alpha$ is trivial, we denote $a_n(K,\alpha)$ and
$a_{m,n}(K,\alpha)$ by $a_n(K)$ and $a_{m,n}(K)$, respectively.
\end{defn}

We have the following bounds on central values of
$\cD_\Lambda(s,s;K,K_\alpha)$, which follow immediately from the
conductor bounds in Lemma \ref{lemLQpole} together with general
subconvexity bounds on $\GL(1)$-automorphic $L$-functions in
\cite{MR2653249}.

\begin{lemma}\label{lemcent}
Let $\Lambda$ be an acceptable collection of local 
specifications with finite total ramification as in Definition \ref{localspec}, and let $K \in
\cF_2(\Lambda)$ be a quadratic field. Let $K(\sqrt{\alpha}) \in
\cF_4^{\un}(\Lambda,K)$ be a quadratic extension of $K$ that is
unramified outside of the places above $\cS_K(\Lambda)$.  We let $P(\Lambda)$ denote square
of the product of the finite primes in $S(\Lambda)$. There exists an absolute constant $\delta>0$ such that
for $s$ with $\Re(s)=\frac12+\epsilon$, we have
\begin{itemize}
\item[{\rm (a)}] $\cD_\Lambda(s,s;K,\alpha)\ll_\epsilon
  |\Disc(K)P(\Lambda) \Im(s)|^{\frac14-\delta}$ when
  $K(\sqrt{\alpha})$ is a quadratic or biquadratic field.
\item[{\rm (b)}] $\cD_\Lambda(s,s;K,\alpha)\ll_\epsilon
  |\Disc(K)P(\Lambda) \Im(s)|^{\frac12-\delta}$ when
  $K(\sqrt{\alpha})$ is a $C_4$- or $D_4$-field.
\end{itemize}
\end{lemma}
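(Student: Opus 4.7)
The plan is to factor $\cD_\Lambda(s,s;K,\alpha)$ through the identity \eqref{eqFfunc} and then reduce everything to standard subconvex bounds for $\GL(1)$ Hecke $L$-functions. Concretely, setting $t=s$ in \eqref{eqFfunc} gives
\begin{equation*}
\cD_\Lambda(s,s;K,\alpha) \;=\; F_\Lambda(s,s;K,\alpha)\cdot L_{\Lambda,\Q}(s;K,\alpha)\cdot L_{\Lambda,K}(s;\alpha),
\end{equation*}
and by \eqref{eqFfunc1} the Euler product defining $F_\Lambda(s,s;K,\alpha)$ converges absolutely throughout $\{\Re(s)>\tfrac12\}$, contributing only an $O_\epsilon(P(\Lambda)^\epsilon)$ factor on the line $\Re(s)=\tfrac12+\epsilon$. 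Similarly, the discrepancy between each truncated $L$-function and its completion in Proposition \ref{lemLQpole} consists of finitely many Euler factors at primes of $S$, each bounded by $1+O(p^{-1/2-\epsilon})$, so this discrepancy is absorbed into an overall factor of $P(\Lambda)^\epsilon$. The task therefore reduces to bounding the corresponding products of completed Hecke $L$-functions on the critical line.

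In case (a), Proposition \ref{lemLQpole} identifies $L_{\Lambda,\Q}(s;K,\alpha)$ with (a truncation of) $\zeta(s)$, and $L_{\Lambda,K}(s;\alpha)$ with (a truncation of) either $\zeta_K(s)$ or $L(s,\chi_\alpha)$, where $\chi_\alpha$ is a Hecke character of $K$ of conductor dividing $P(\Lambda)|\Disc(K)|$. Writing $\zeta_K(s)=\zeta(s)L(s,\chi_K)$, in either subcase the product $L_{\Lambda,\Q}(s;K,\alpha)L_{\Lambda,K}(s;\alpha)$ is (up to bounded Euler factors) a product of $\zeta(s)$ with a single non-trivial Dirichlet or Hecke $L$-function whose analytic conductor is $\ll \Disc(K)P(\Lambda)(1+|\Im(s)|)$. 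Applying the hybrid subconvex bound of \cite{MR2653249} to this single nontrivial factor yields a saving of $\delta>0$ beyond the $\tfrac14$ convexity exponent in the combined analytic conductor, while the $\zeta(s)$ factor is $O((1+|\Im(s)|)^\epsilon)$. This gives the claimed bound in (a).

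In case (b), Proposition \ref{lemLQpole} instead identifies both $L_{\Lambda,\Q}(s;K,\alpha)$ and $L_{\Lambda,K}(s;\alpha)$ with non-trivial completed Hecke $L$-functions (of $\bQ$ and of $K$, respectively), each of analytic conductor bounded by $\Disc(K)P(\Lambda)(1+|\Im(s)|)^{O(1)}$. Applying the hybrid subconvex bound of \cite{MR2653249} to each factor saves $\delta>0$ beyond the $\tfrac14$-convexity exponent, and multiplying the two bounds gives the claimed $(\Disc(K)P(\Lambda)\Im(s))^{\tfrac12-\delta}$ saving (after shrinking $\delta$ to absorb constants and replace $|\Im(s)|$-dependent polynomial factors by $|\Im(s)|^{\tfrac12-\delta}$ in the regime where $|\Im(s)|\geq 1$).

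The main obstacle is the input of \emph{uniform} hybrid subconvexity, saving in both the arithmetic conductor and the imaginary part simultaneously; but this is exactly what is provided by the cited general $\GL(1)$ subconvexity bounds. Once that input is in hand, the argument is purely bookkeeping: track conductors through Proposition \ref{lemLQpole}, peel off the absolutely convergent correction $F_\Lambda$, and combine.
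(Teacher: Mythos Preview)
Your proposal is correct and follows essentially the same approach as the paper: the paper's own proof is a single sentence stating that the bounds ``follow immediately from the conductor bounds in Lemma~\ref{lemLQpole} together with general subconvexity bounds on $\GL(1)$-automorphic $L$-functions in \cite{MR2653249}.'' You have simply unpacked this by invoking the factorization \eqref{eqFfunc}, observing that $F_\Lambda$ is harmless on $\Re(s)=\tfrac12+\epsilon$, and then applying the conductor bounds and hybrid subconvexity to the completed $L$-functions identified in Proposition~\ref{lemLQpole}.
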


We can use the subconvexity estimates stated in the previous lemma and apply Lemma \ref{lemMellin} to pick up the main term from the residues of $D_{\Lambda}(s,s;K,\alpha)$, along with an error bound. In particular, we have the following result.

\begin{lemma}\label{residues}
Let the notation be as in Lemma \ref{lemcent}, and fix a smooth probability function $\phi:\R_{\geq 0}\to\R_{\geq 0}$ with
compact support. For real numbers $X>1$, we have
\begin{equation*}
\displaystyle\sum_{n\geq 1}a_{n}(K)\cdot  \phi\Bigl(\frac{n}{X}\Bigr)
= \Res^{(2)}_{s=1} \cD_\Lambda(s,s;K)\cdot X \log X+\Res^{(1)}_{s=1}
\cD_\Lambda(s,s;K)\cdot X +\,
O_\epsilon\bigl(X^{\frac12+\epsilon}|\Delta(K)P(\Lambda)|^{\frac14-\delta}).
\end{equation*}
For nontrivial $\alpha$, we have:
\begin{equation*}
\sum_{n\geq 1}a_{n}(K,\alpha)\cdot\phi\Bigl(\frac{n}{X}\Bigr)\,\,=\,\, \begin{cases}
\Res^{(1)}_{s=1}\cD_\Lambda(s,s;K,\alpha)\cdot X+
O_\epsilon\bigl(X^{\frac12+\epsilon}|\Disc(K)P(\Lambda)|^{\frac14-\delta}) &
\mbox{ if $K(\sqrt{\alpha})$ is (bi)quadratic;} \vspace{.1in}\\
 O_\epsilon
\bigl(X^{\frac12+\epsilon}|\Delta(K)P(\Lambda)|^{\frac12-\delta}) & \mbox{ otherwise.}
\end{cases}
\end{equation*}
\end{lemma}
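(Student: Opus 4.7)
\textbf{Proof proposal for Lemma \ref{residues}.} The plan is to directly apply the smoothed Mellin inversion of Lemma \ref{lemMellin} to the Dirichlet series $D(s) = \cD_\Lambda(s,s;K,\alpha)$, using Corollary \ref{polesofZ} to identify the pole structure and Lemma \ref{lemcent} to bound the error term $E(1/2+\epsilon, D)$.

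First, I would verify that Lemma \ref{lemMellin} applies. By Corollary \ref{polesofZ}, $\cD_\Lambda(s,s;K,\alpha)$ is meromorphic in $\{\Re(s)>1/2\}$ with at most a pole of order $2$ at $s=1$, which is exactly the hypothesis of Lemma \ref{lemMellin}. Polynomial growth on vertical strips follows by combining the subconvexity estimates of Lemma \ref{lemcent} on the line $\Re(s)=1/2+\epsilon$ with the convergent Euler product bound for $\Re(s)\geq 2$, and then applying the Phragm\'en--Lindel\"of convexity principle in the strip $1/2+\epsilon \leq \Re(s) \leq 2$. The poles contribute precisely the residue terms in the stated formula: when $\alpha$ is trivial, Corollary \ref{polesofZ}(a) gives a double pole, producing both the $X\log X$ and $X$ main terms; when $K(\sqrt{\alpha})$ is biquadratic, Corollary \ref{polesofZ}(b) gives a simple pole, producing only the $X$ main term; and when $K(\sqrt{\alpha})$ is a $C_4$- or $D_4$-field, Corollary \ref{polesofZ}(c) guarantees $D(s)$ is holomorphic at $s=1$, so no main term appears.

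Next, the error term in Lemma \ref{lemMellin} is $O_\epsilon(X^{1/2+\epsilon} E(1/2+\epsilon, D))$, so it remains to insert the subconvexity bounds from Lemma \ref{lemcent}. In the quadratic/biquadratic cases, Lemma \ref{lemcent}(a) yields
\begin{equation*}
E(1/2+\epsilon, \cD_\Lambda(\cdot,\cdot;K,\alpha)) \ll_\epsilon |\Disc(K) P(\Lambda)|^{1/4-\delta} \cdot X^{\epsilon(1/4-\delta)},
\end{equation*}
where the factor $X^{\epsilon(1/4-\delta)}$ (coming from the range $|t|\ll X^\epsilon$ in \eqref{eqerror}) is absorbed into $X^{1/2+\epsilon}$ by adjusting $\epsilon$. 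In the $C_4$/$D_4$ cases, Lemma \ref{lemcent}(b) analogously produces the bound $|\Delta(K) P(\Lambda)|^{1/2-\delta}$. Substituting these into the output of Lemma \ref{lemMellin} yields exactly the two claimed statements.

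The main obstacle is not analytic but bookkeeping: ensuring that the three cases (trivial $\alpha$, biquadratic, $C_4$/$D_4$) are correctly matched with the corresponding pole orders from Corollary \ref{polesofZ} and subconvexity bounds from Lemma \ref{lemcent}. I expect no new ideas are needed beyond citing these two results. A minor technical point to be careful about is the justification of the contour shift from $\Re(s)=2$ down to $\Re(s)=1/2+\epsilon$: one truncates the integral at height $|t|\asymp X^\epsilon$ using the superpolynomial decay of $\widetilde{\phi}$, applies the residue theorem on the truncated rectangle, and observes that the horizontal pieces contribute negligibly thanks to the same decay of $\widetilde{\phi}$. This is a standard argument, already implicit in the proof of Lemma \ref{lemMellin}, so no additional work is required here.
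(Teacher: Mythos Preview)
Your proposal is correct and follows exactly the same approach as the paper, which simply states that the lemma ``follows immediately from Lemma \ref{lemMellin} together with the bounds on the central values in Lemma \ref{lemcent}.'' Your write-up is in fact more detailed than the paper's one-line proof, filling in the pole bookkeeping via Corollary \ref{polesofZ} and the Phragm\'en--Lindel\"of justification for polynomial growth that the paper leaves implicit.
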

\begin{proof}
The proof of the lemma follows immediately from Lemma \ref{lemMellin}
together with the bounds on the central values in Lemma \ref{lemcent}.
\end{proof}

Next, we estimate smoothed double sums of the coefficients of
$\cD_\Lambda(s,t;K,\alpha)$. 

\begin{prop}\label{propddsasymp}
Let $\Lambda$ be an acceptable collection of local 
specifications with finite total ramification as in Definition \ref{localspec}, and let $K \in
\cF_2(\Lambda)$ be a quadratic field. Let $K(\sqrt{\alpha}) \in
\cF_4^{\un}(\Lambda,K)$ be a quadratic extension of $K$ that is
unramified outside of the places above $\cS_K(\Lambda)$. For positive
real numbers $X_{\sigma^2}$ and $X_\tau$ and a smooth probability function $\phi: \bR_{\geq 0} \rightarrow \bR_{\geq 0}$ with compact support, we define the sum
\begin{equation}\label{twovardirichlet}
\cN_{\Lambda,\phi}(X_{\sigma^2},X_\tau;K,\alpha):=
\displaystyle\sum_{m,n>0}a_{m,n}(K,\alpha)
\phi\Bigl(\frac{m}{X_{\sigma^2}}\Bigr)\phi\Bigl(\frac{n}{X_\tau}\Bigr),
\end{equation}
where $a_{m,n}(K,\alpha)$ are the Dirichlet coefficients of $\cD_\Lambda(s,t; K,\alpha) $ given in Definition \ref{anK}. Denote 
$\cN_{\Lambda,\phi}(X_{\sigma^2},X_\tau;K,1)$ by
$\cN_{\Lambda,\phi}(X_{\sigma^2},X_\tau;K)$ when $\alpha$ is
trivial. If $\delta$ is defined as in Lemma \ref{lemcent}, we then have:
\begin{equation*}
\begin{array}{rcl}
  \cN_{\Lambda,\phi}(X_{\sigma^2},X_\tau;K)
=\Res^{(2)}_{s=1}\cD_\Lambda(s,s;K)\cdot X_{\sigma^2}X_\tau
+\, O_{\epsilon,\Lambda}\bigl(
X_{\sigma^2}^{\frac12+\epsilon}X_\tau\Res_{t=1}\zeta_K(t)
+X_{\sigma^2}X_\tau^{\frac12+\epsilon}
|\Delta(K)P(\Lambda)|^{\frac14-\delta}\bigr),
\end{array}
\end{equation*}
where $P(\Lambda)$ denotes the square of the product of the finite primes in $S(\Lambda)$. 
For nontrivial $\alpha$, we have 
\begin{equation*}
\cN_{\Lambda,\phi}(X_{\sigma^2}, X_\tau;K,\alpha) = \begin{cases}
O_\epsilon\bigl(X_{\sigma^2}X_\tau^{\frac12+\epsilon}
|\Delta(K)P(\Lambda)|^{\frac14-\delta}\bigr)
&\mbox{ if $K(\sqrt{\alpha})$ is biquadratic;} \vspace{.1in}\\
\displaystyle O_\epsilon(X_{\sigma^2}^{\frac12+\epsilon}X_{\tau}^{\frac12+\epsilon}
|\Delta(K)P(\Lambda)|^{\frac12-2\delta}), &\mbox{ otherwise.}
\end{cases}
\end{equation*}
\end{prop}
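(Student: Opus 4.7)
The plan is to apply Mellin inversion in both variables and then shift each contour to the line $\Re=\tfrac12+\epsilon$, using the analytic structure of $\cD_\Lambda(s,t;K,\alpha)$ from Theorem~\ref{proppole2} together with the factorization
$$\cD_\Lambda(s,t;K,\alpha) = F_\Lambda(s,t;K,\alpha)\cdot L_{\Lambda,\Q}(s;K,\alpha)\cdot L_{\Lambda,K}(t;\alpha),$$
where by \eqref{eqFfunc1} the factor $F_\Lambda$ is absolutely convergent in the region $\{\Re(s)>\tfrac12,\,\Re(t)>\tfrac12\}$, so that all polar behavior is inherited from the two $L$-function factors classified in Proposition~\ref{lemLQpole}. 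Concretely, I would start from
$$\cN_{\Lambda,\phi}(X_{\sigma^2},X_\tau;K,\alpha)=\frac{1}{(2\pi i)^2}\int_{\Re(s)=2}\int_{\Re(t)=2}\cD_\Lambda(s,t;K,\alpha)\,X_{\sigma^2}^s X_\tau^t \,\tilde\phi(s)\tilde\phi(t)\,ds\,dt,$$
and use the assumption that $\phi$ is a probability density, which forces $\tilde\phi(1)=1$ and $\tilde\phi$ to have superpolynomial decay on vertical lines.

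For the trivial case ($\alpha=1$), $\cD_\Lambda(s,t;K)$ has simple poles along both $s=1$ and $t=1$. I would first shift the $s$-contour to $\Re(s)=\tfrac12+\epsilon$, producing a residue term $R(t):=\Res_{s=1}\cD_\Lambda(s,t;K)$ which remains meromorphic in $t$ with a simple pole at $t=1$ coming from $\zeta_{\Lambda,K}(t)$. Then shift the $t$-contour in both pieces to $\Re(t)=\tfrac12+\epsilon$, picking up a residue at $t=1$ in each. A Laurent expansion of $\cD_\Lambda(s,t;K)$ about $(1,1)$ followed by the substitution $s=t$ identifies the iterated double residue with $\Res^{(2)}_{s=1}\cD_\Lambda(s,s;K)$, yielding the main term $\Res^{(2)}_{s=1}\cD_\Lambda(s,s;K)\cdot X_{\sigma^2}X_\tau$. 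The first stated error term arises from the residue at $t=1$ of the shifted-$s$ integral, which contributes the factor $\Res_{t=1}\zeta_K(t)$ along with a convexity-style bound for $\zeta_{\Lambda,\Q}(s;K)$ on $\Re(s)=\tfrac12+\epsilon$; the second stated error term arises from the residue $R(t)$ at $s=1$ integrated on $\Re(t)=\tfrac12+\epsilon$, bounded via subconvexity for $\zeta_K$ (conductor $|\Delta(K)|$) against the superpolynomial decay of $\tilde\phi$. The doubly-shifted remainder is absorbed into these.

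For the biquadratic case, only the $s=1$ pole is present, so shifting $s$ yields a residue term which is entire in $t$; shifting $t$ to $\Re(t)=\tfrac12+\epsilon$ and applying subconvexity for $L(t,\chi_\alpha)$ with conductor $\leq P(\Lambda)|\Delta(K)|$ yields the bound $X_{\sigma^2}X_\tau^{1/2+\epsilon}|\Delta(K)P(\Lambda)|^{1/4-\delta}$, and the doubly-shifted remainder is strictly smaller. For the $C_4$- or $D_4$-case, $\cD_\Lambda(s,t;K,\alpha)$ is holomorphic throughout $\{\Re(s),\Re(t)>\tfrac12\}$, so no residues arise; shifting both contours to $\Re(s)=\Re(t)=\tfrac12+\epsilon$ and applying subconvexity separately to each of $L_{\Lambda,\Q}(s;K,\alpha)$ and $L_{\Lambda,K}(t;\alpha)$—each with conductor $\leq P(\Lambda)|\Delta(K)|$—produces the product saving $|\Delta(K)P(\Lambda)|^{1/2-2\delta}$. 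The main obstacle throughout is the careful bookkeeping of the iterated residues and the verification that the doubly-shifted remainders are dominated by the stated error terms in each case; all analytic input reduces, via Proposition~\ref{lemLQpole} and \eqref{eqFfunc1}, to standard Hecke $L$-function subconvexity as packaged in Lemma~\ref{lemcent}.
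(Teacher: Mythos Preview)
Your proposal is correct and follows essentially the same route as the paper: double Mellin inversion, the factorization $\cD_\Lambda=F_\Lambda\cdot L_{\Lambda,\Q}\cdot L_{\Lambda,K}$ to isolate the poles, two successive contour shifts to $\Re=\tfrac12+\epsilon$, and subconvexity from Lemma~\ref{lemcent} for the error terms. The only cosmetic differences are that the paper packages the holomorphic factor as $G_\Lambda(s,t;K)=\cD_\Lambda(s,t;K)/(\zeta(s)\zeta_K(t))$ and identifies the iterated residue $\Res_{t=1}\zeta_K(t)\,G_\Lambda(1,1;K)$ with $\Res^{(2)}_{s=1}\cD_\Lambda(s,s;K)$ by a direct Euler-product manipulation rather than a Laurent expansion, and in the biquadratic case the paper shifts $t$ first and then $s$ (the opposite of your order), but either order yields the same bound.
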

\begin{proof}
  
We prove the case in detail when $\alpha=1$, i.e., when $K(\sqrt{\alpha}) =
K$. Recall the definition of $F_\Lambda(s,t;K):=F_\Lambda(s,t;K,1)$
from \eqref{eqFfunc}, and note from Lemma \ref{lemLQpole} that
\begin{equation*} G_\Lambda(s,t;K) :=
  \frac{\cD_\Lambda(s,t;K)}{\zeta(s)\zeta_K(t)} \end{equation*} has
the same convergence properties (i.e., absolute convergence to the
right of $\Re(s)>1/2$ and $\Re(t)>1/2$) as $F_\Lambda(s,t;K)$. Therefore, we
have
\begin{equation}
\begin{array}{rcl}
\displaystyle\cN_{\Lambda,\phi}(X_{\sigma^2},X_\tau;K)
&=&
\displaystyle\frac{1}{-4\pi^2}\int_{\Re(s)=2}\int_{\Re(t)=2}
\sum_{m,n>0}a_{m,n}(K)\Bigl(\frac{m}{X_{\sigma^2}}\Bigr)^{-s}
\Bigl(\frac{n}{X_\tau}\Bigr)^{-t}
\widetilde{\phi}(s)\widetilde{\phi}(t)dsdt
\\[.2in]&=&
\displaystyle\frac{1}{-4\pi^2}\int_{\Re(s)=2}\int_{\Re(t)=2}
\zeta(s)\zeta_K(t)G_\Lambda(s,t;K)
X_{\sigma^2}^s X_\tau^t\widetilde{\phi}(s)\widetilde{\phi}(t)dsdt.
\end{array}
\end{equation}
Pulling back the line of integration over $s$ to
$\Re(s)=\frac12+\epsilon$, we see that the left hand side of the above
equation is equal to
\begin{equation*}
\frac{X_{\sigma^2}}{2\pi i}
\int_{\Re(t)=2}\zeta_K(t)G_\Lambda(1,t; K)X_\tau^t\widetilde{\phi}(t)dt
+O\left(X_{\sigma^2}^{\frac12+\epsilon}\int_{\Re(t)=2}\zeta_K(t)
G_\Lambda({\textstyle\frac12}+\epsilon,t;K)
X_\tau^t\widetilde{\phi}(t)dt\right).
\end{equation*}
To estimate the error term above, we merely pull the line of
integration over $t$ to $\Re(t)=1/2+\epsilon$, picking up the pole at
$t=1$, and obtain an error of
\begin{equation*}
O_\epsilon\Bigl(
X_{\sigma^2}^{\frac12+\epsilon}X_\tau |G_\Lambda(1/2+\epsilon,1,K)|
\Res_{t=1}\zeta_K(t)+
X_{\sigma^2}^{\frac12+\epsilon}X_\tau^{\frac12+\epsilon}
|\Delta(K)P(\Lambda)|^{\frac14-\delta}
\Bigr)  
\end{equation*}
which is sufficiently small since $|G_\Lambda(1/2+\epsilon,1,K)|$ is
absolutely bounded independent of $K$. To estimate the main term, we
pull the line of integration over $t$ to $\Re(t)=\frac12+\epsilon$,
yielding
\begin{equation*} 
\Res^{(1)}_{t=1}\zeta_K(s)G_\Lambda(1,1;K)\cdot
X_{\sigma^2}X_\tau+O_\epsilon(X_{\sigma^2}X_\tau^{\frac12+\epsilon}
|\Delta(K)P(\Lambda)|^{\frac14-\delta}),
\end{equation*}
where the error term is again sufficiently small.
Finally, we have
\begin{equation*}
\begin{array}{rcl}
\displaystyle\Res^{(1)}_{t=1}\zeta_K(t)G_\Lambda(1,1;K)&=&
\displaystyle\left.\frac{\zeta(s)\zeta_K(t)G_\Lambda(s,t;K)}{\zeta(s)\zeta(t)}\right|_{s=1,t=1}
\\[.2in]&=&
\displaystyle\left.\frac{\zeta_K(s)\zeta(s)G_\Lambda(s,s;K,K)}{\zeta(s)^2}\right|_{s=1}
\\[.2in]&=&
\displaystyle\Res^{(2)}_{s=1}\cD_\Lambda(s,s;K),
\end{array}
\end{equation*}
as necessary.  The proof for nontrivial $\alpha$ follows the same
lines: when $K(\sqrt{\alpha})$ is biquadratic, we shift the line of
integration from $t=2$ to $t=1/2+\epsilon$ without picking up any
polar contributions, and then shift the line of integration from $s=2$
to $s=1/2+\epsilon$ picking up the pole at $1$. When $K(\sqrt{\alpha})$ has
Galois group $D_4$ or $C_4$, we shift the lines of integration from
$s=2,t=2$ to $s=1/2+\epsilon,t=1/2+\epsilon$ without picking up any
polar contributions. The bounds then follow from the conductor values
listed in Proposition \ref{lemLQpole} and the subconvexity estimates of Lemma
\ref{lemcent}.
\end{proof}

\section{The number of $D_4$-pairs with bounded height}

Let $\Lambda$ be an acceptable collection of local
specifications with finite total ramification as in Definition \ref{localspec}. Recall that $S(\Lambda)$ consists of the infinite place of $\Q$, the even place $2$, all the places at which $\Lambda_p$ contains a  total ramified pair of local algebras, and all places $p$ for which $\Lambda_p$ does not consist of all of the non totally ramified pairs of algebras. Recall also from Definition \ref{sheight} the $S(\Lambda)$-height $\H$ on $\FF_4(\Lambda)$ given by
\begin{equation*}
\H(L,K)=\q^{S(\Lambda)}_{\sigma^2}(L,K)\q^{S(\Lambda)}_{\tau}(L,K)\D^{S(\Lambda)}(L,K),
\end{equation*}
utilizing the fact that the factor corresponding to $\q_\sigma(L,K)$ is $1$ for every $(L,K) \in \FF_4(\Lambda)$ (see Definition \ref{localspec}). We define the
{\it restricted $\Lambda$-mass} $\Mass^\flat(\Lambda)$ to be
\begin{equation}\label{eqmass}
\Mass^\flat(\Lambda):=\Bigl(\sum_{(L,K)\in\Lambda_\infty}\frac{1}{\#\Aut(L,K)}\Bigr)
\prod_{p\in S(\Lambda)} \Bigl(\sum_{(L,K)\in\Lambda_p}
\frac{1}{\#\Aut(L,K)}\Bigr)\Bigl( 1-\frac{1}{p}\Bigr)^3.
\end{equation}
In this section, we prove the following result counting $D_4$-pairs
when they are ordered by height. 
\begin{thm}\label{thD4quar}
Let $\Lambda$ be an acceptable collection of local
specifications with finite total ramification as in Definition \ref{localspec},   and recall $\Mass^\flat(\Lambda)$ defined in \eqref{eqmass}. Let $N_{D_4}(\Lambda;X)$
denote the number of $D_4$-pairs $(L,K)\in\FF_4(\Lambda)$ such that
$\H(L,K)<X$ (see Definition \ref{sheight}). We then have:
\begin{equation*}
N_{D_4}(\Lambda;X)=\frac{1}{4}\Mass^\flat(\Lambda)\prod_{p\not\in S(\Lambda)}
\Bigl(1+\frac{3}{p}\Bigr)\Bigl(1-\frac{1}{p}\Bigr)^3\cdot X\log^2 X
+o(X\log^2 X).
\end{equation*}
\end{thm}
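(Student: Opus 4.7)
The plan is to fix a quadratic field $K \in \cF_2(\Lambda)$, extract an asymptotic for the number of $D_4$-pairs $(L, K)$ with $\q_{\sigma^2}^{S(\Lambda)}(L,K) \cdot \q_\tau^{S(\Lambda)}(L,K) \leq Y := X/\D^{S(\Lambda)}(L,K)$, and then sum over $K$. Since $\D(L,K) = |\Disc(K)|$ depends only on $K$, the condition $\H(L,K) < X$ decouples into the bound $\q_{\sigma^2}\q_\tau < X/\D(K)$ for each fixed $K$ (recall that $\q_\sigma^{S(\Lambda)}(L,K)=1$ under our acceptability hypothesis). Writing $N_{D_4}(\Lambda;X) = \sum_{K \in \cF_2(\Lambda)} N_{D_4,K}(\Lambda;X/\D(K))$ reduces the problem to two tasks: extracting an asymptotic for $N_{D_4,K}(\Lambda;Y)$ for each $K$, and then summing in $K$.

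For fixed $K$, I would apply Lemma \ref{residues} to the one-parameter specialization $\Phi_\Lambda(s, s; K)$, whose Dirichlet coefficients enumerate all $L \in \cF_4(\Lambda, K)$ weighted by $(\q_{\sigma^2}\q_\tau)^{-s}$. Via Corollary \ref{eqScounttrip}, this specialization decomposes as
\begin{equation*}
\Phi_\Lambda(s,s;K) \;=\; -1 + B_\Lambda(K)\cdot\cD_\Lambda(s,s;K,1) \;+\; \sum_{K(\sqrt{\alpha}) \in \cF^{\un}_4(\Lambda, K)} B_\Lambda(K, \alpha)\cdot\cD_\Lambda(s,s;K,\alpha),
\end{equation*}
and by Corollary \ref{polesofZ} only the trivial $\alpha=1$ term is of double-pole type, biquadratic $\alpha$ contributes a simple pole, and $C_4$/$D_4$-type $\alpha$ contributes no pole in $\{\Re(s) > 1/2\}$. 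Thus for a smooth probability weight $\phi$,
\begin{equation*}
\sum_{L \in \cF_4(\Lambda, K)} \phi\Bigl(\tfrac{\q_{\sigma^2}(L,K)\q_\tau(L,K)}{Y}\Bigr) = B_\Lambda(K)\cdot\Res^{(2)}_{s=1}\cD_\Lambda(s,s;K,1)\cdot Y\log Y + O_K(Y) + (\text{error}).
\end{equation*}
Next, I would isolate the $D_4$-contribution by the dichotomy emphasized in the introduction: the $V_4$- and $C_4$-pairs arise predominantly from non-trivial characters (biquadratic or cyclic) and contribute only $O_K(Y)$ (from the single-pole biquadratic terms, and from Lemma \ref{lemcent} for the $C_4$/$D_4$ terms), so the entire $Y\log Y$-order main term comes from $D_4$-pairs.

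Summing over $K$ is carried out by dyadic decomposition on $\D(K) \in [2^k, 2^{k+1}]$: each dyadic range contains $\asymp 2^k$ fields, each contributing $\asymp (X/2^k)\log(X/2^k)$, so each range contributes $\asymp X\log(X/2^k)$, and summing over $k \leq \log_2 X$ yields the claimed order $X\log^2 X$. This dyadic picture reflects the \emph{non-concentrated} nature of the family: no single dyadic range dominates. To identify the leading constant, I would factor, away from primes in $S(\Lambda)$,
\begin{equation*}
\cD_\Lambda(s,s;K,1) = \zeta_{\Lambda,\Q}(s)\cdot\zeta_{\Lambda,K}(s)\cdot G_\Lambda(s,s;K),
\end{equation*}
so that $\Res^{(2)}_{s=1}\cD_\Lambda(s,s;K,1) = L(1,\chi_K)\cdot G_\Lambda(1,1;K)$ (times tame local factors). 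To evaluate $\sum_K L(1,\chi_K)B_\Lambda(K) G_\Lambda(1,1;K)/\D(K)$, I would relate this double-pole residue to the single-pole residue of the $(s,2s)$-specialization $\cD_\Lambda(s,2s;K,1)$, which is precisely the Dirichlet series underlying the Artin-conductor count of quartic $D_4$-fields in \cite{D4preprint}; one then imports the sum-over-$K$ result from that work and converts it into the Euler-product form of the answer via a careful rearrangement of local factors at every prime.

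The main obstacles will be: (i) proving that $V_4$- and $C_4$-subfamilies do not contaminate the leading-order asymptotic, which hinges on the trivial-versus-nontrivial-character dichotomy from Corollary \ref{polesofZ} together with standard counts of biquadratic and cyclic quartic extensions; (ii) controlling the tail of large $\D(K)$, since a naive application of the per-$K$ error in Lemma \ref{residues} gives $X^{5/4-\delta}$, forcing us to use the averaged double-sum form in Proposition \ref{propddsasymp} with a hyperbola-method truncation so that the subconvexity savings in Lemma \ref{lemcent} accumulate in aggregate; and (iii) carrying out the explicit Euler-product evaluation to produce the factor $(1+3/p)(1-1/p)^3$, which is the technical heart of verifying the Malle--Bhargava principle in this setting.
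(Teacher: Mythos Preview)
Your proposal correctly identifies the main ingredients --- the decomposition of $\Phi_\Lambda(s,s;K)$ via Corollary~\ref{eqScounttrip}, the pole dichotomy of Corollary~\ref{polesofZ}, and the link to \cite{D4preprint} for the constant --- and your plan for obstacle (iii) is essentially what the paper carries out in \S5.3. However, your organization differs from the paper's, and there is a real gap in obstacle (ii).

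The paper does \emph{not} proceed by fixing $K$, extracting a $Y\log Y$ main term from the one-variable series, and then summing over $K$. Instead it decomposes the region $\{\D\cdot\q_{\sigma^2}\cdot\q_\tau < X\}$ into a ``main body'' (all three invariants exceed $(\log X)^A$) and a ``cusp'' (at least one is small). In the main body it tiles by $\kappa$-adic cuboids in \emph{all three} variables and applies Proposition~\ref{propddsasymp}; the $\log^2 X$ then emerges from the volume $\int_{xyz<X}dxdydz=\tfrac12 X\log^2 X$, rather than as one logarithm per $K$ and one from the sum over $K$.

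The gap in your plan is precisely the cusp. You acknowledge that the per-$K$ error from Lemma~\ref{residues} is too large and propose to repair this via Proposition~\ref{propddsasymp} ``with a hyperbola-method truncation.'' But the error term in Proposition~\ref{propddsasymp} for trivial $\alpha$ contains $X_{\sigma^2}^{1/2}X_\tau\,\Res_{t=1}\zeta_K(t)$, which is only small when $X_{\sigma^2}$ is large; you give no plan for the complementary region where $\q_{\sigma^2}$ (or $\q_\tau$, or $\D$) is $\ll(\log X)^A$. Bounding that region is Proposition~\ref{propcusp}, and its proof requires two inputs you do not mention: (a) the flip $(L,K)\mapsto(\varphi_4(L),\varphi_2(L))$, which swaps $\D\leftrightarrow\q_\tau$ and thereby reduces small $\q_\tau$ to small $\D$; and (b) for small $\q_{\sigma^2}$, a direct appeal to the uniformity estimate \cite[Proposition~8.2]{D4preprint}. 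Without these, your errors will not sum to $o(X\log^2 X)$.

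A smaller point: your mechanism for discarding $V_4$- and $C_4$-pairs (``they arise predominantly from nontrivial characters'') is more roundabout than needed. Since such $L$ are Galois over $\Q$, every prime above $p$ has the same ramification and residue degree, so the splitting types $(1^211)$ and $(1^22)$ never occur; hence $\q_\tau^{S(\Lambda)}(L,K)=1$ for all $V_4$- and $C_4$-pairs. They therefore lie entirely in the cusp and are disposed of by Proposition~\ref{propcusp} once that is in hand --- no character-by-character argument is required.
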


This section is organized as follows. We first consider the ``cuspidal
region'' of the invariant space
$\{(\q_{\sigma^2},\q_{\tau},\D):\q_{\sigma^2}\q_{\tau}\D<X\}$
consisting of triples with at least one small coordinate (bounded by
a power of $\log X$). In \S5.1, we bound the number of $D_4$-pairs in
$\FF_4(\Lambda)$ whose invariants lie in the cuspidal region. Next in
\S5.2, we determine asymptotics for the number of pairs
$(L,K)\in\FF_4(\Lambda)$, whose invariants lie in dyadic ranges inside
the ``main body,'' i.e.~where $\q_{\sigma^2}$, $\q_{\tau}$, $\D \gg (\log X)^A$ for some positive integer $A$. Utilizing Proposition \ref{propddsasymp}, these asymptotics are given in terms of this dyadic
volume times a sum of residues of $\cD(s,s,K)$, as $K$ varies over
$\FF_2(\Lambda)$. Finally, in \S5.3, we evaluate this sum of residues
and then prove the Theorem \ref{thD4quar} using a tiling argument.

\subsection{Bounding the number of $D_4$-pairs in the cusp}

In this section, we work in a slightly more general setting. Let
$\Lambda$ be an acceptable collection of local specifications
with finite total ramification as in Definition \ref{localspec}, and let $T$ be a positive squarefree
integer such that $T$ is not divisible by any prime in
$S(\Lambda)$. Consider the ``$T$-altered'' collection $\Lambda(T)$ of local
specifications defined as follows: for $p\mid T$, set
$\Lambda(T)_p$ to be the set of all totally ramified pairs of local algebras (as in Definition \ref{localspec}),
and for all other places $v$, set $\Lambda(T)_v=\Lambda_v$. Clearly
$\Lambda(T)$ is also acceptable with finite total ramification. In
this subsection, we bound the number of $D_4$ pairs in $\FF_4(\Lambda(T))$ with bounded height and invariants in the ``cuspidal region'', and keep track of the $T$-dependence of the bound. %

\begin{prop}\label{propcusp}
Let $\Lambda$, $T$, and $\Lambda(T)$ be as above. For a positive integer $A$, let $N_{D_4}^{\cusp}(\Lambda(T);X,A)$
denote the number of $D_4$-pairs $(L,K)\in\FF_4(\Lambda(T))$ with
$S(\Lambda(T))$-height less than $X$, and such that one of the following
is satisfied:
\begin{itemize}
\item $\D^{S(\Lambda(T))}(L,K)\leq (\log X)^A$;
\item $\q_\tau^{S(\Lambda(T))}(L,K)\leq (\log X)^A$; or
\item $\q_{\sigma^2}^{S(\Lambda(T))}(L,K)\leq (\log X)^A$.
\end{itemize}
We have the following bound:
\begin{equation*}
N_{D_4}^{\cusp}(\Lambda(T);X,A)\ll_{\Lambda,A,\epsilon}
T^\epsilon X(\log X)^{1+\epsilon}.
\end{equation*}
\end{prop}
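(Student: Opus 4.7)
The plan is to partition the cuspidal region into three disjoint subregions, determined by which of the three invariants $\D^{S(\Lambda(T))}$, $\q_\tau^{S(\Lambda(T))}$, or $\q_{\sigma^2}^{S(\Lambda(T))}$ is at most $(\log X)^A$, and bound each piece using the Dirichlet series machinery of Section 4 combined with mean-value estimates for $L(1,\chi_K)$. Write $S' := S(\Lambda(T))$ for brevity and observe that primes dividing $T$ belong to $S'$, so that $(L,K) \in \cF_4(\Lambda(T))$ forces $T \mid \Delta(K)$ (since total ramification at $p\mid T$ implies $p\mid \Delta(K)$ by Table \ref{inertiatable}).

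For the first subregion (small $\D$), there are only $O(T^{-1}(\log X)^A)$ admissible $K$ with $\D(K) \leq (\log X)^A$ satisfying $T\mid\Delta(K)$. For each such $K$, Lemma \ref{residues} bounds the number of $L/K$ with $\q_{\sigma^2}\q_\tau \leq X/\D(K)$ by $\Res^{(2)}_{s=1}\cD_{\Lambda(T)}(s,s;K)\cdot (X/\D(K))\log X$ plus a subconvex error; by the factorization of Proposition \ref{lemLQpole}, this residue equals $L(1,\chi_K)\cdot G_{\Lambda(T)}(1,1;K)$, which is $\ll T^\epsilon(\log\D(K))^{O(1)}$ uniformly. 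Summing over $K$ via $\sum_{K:\,T\mid\Delta(K),\,\D(K)\leq Y}\D(K)^{-1} \ll T^{-1}\log Y$ yields a contribution of $\ll T^{-1+\epsilon}X(\log X)^{1+\epsilon}$.

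For the other two subregions, I would decompose dyadically, writing $\q_{\sigma^2}\sim X_{\sigma^2}$, $\q_\tau\sim X_\tau$, and $\D(K)\sim D$ with $X_{\sigma^2}X_\tau D \leq X$ together with the smallness constraint on one of $X_\tau$, $X_{\sigma^2}$. Proposition \ref{propddsasymp} then bounds the count in each dyadic box and each $K$ by $\Res^{(2)}_{s=1}\cD_{\Lambda(T)}(s,s;K)\cdot X_{\sigma^2}X_\tau$ plus a subconvex error. The smallness constraint restricts the small direction to $O(\log\log X)$ dyadic values while the other two range over $O(\log X)$ dyadic values each, constrained by the height bound. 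Summing over $K$ using the mean-value estimate $\sum_{K:\,T\mid\Delta(K),\,\D(K)\sim D}L(1,\chi_K)\ll T^\epsilon D$, followed by the dyadic summation, produces a bound of $\ll T^\epsilon X(\log X)^{1+\epsilon}$ in each case.

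The principal obstacle will be uniform control, in both $K$ and $T$, of the residues $\Res^{(2)}_{s=1}\cD_{\Lambda(T)}(s,s;K)$ and the error terms of Proposition \ref{propddsasymp}, whose dependence on $P(\Lambda(T)) \asymp T^2$ must not overwhelm the $T^{-1}$ savings coming from the restriction $T\mid\Delta(K)$. The essential analytic input is the mean-value estimate for $L(1,\chi_K)$ over quadratic characters with conductor divisible by a fixed squarefree $T$, which is standard up to the harmless $T^\epsilon$ loss permitted in the statement. Assembling the three subregional contributions then gives the claimed bound on $N_{D_4}^{\cusp}(\Lambda(T);X,A)$.
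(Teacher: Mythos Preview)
Your treatment of the first subregion (small $\D^{S'}$) is essentially the paper's argument and is fine. The gap is in the other two subregions. When $\q_\tau^{S'}$ (or $\q_{\sigma^2}^{S'}$) is small, the invariant $\D^{S'}(K)$ is allowed to range all the way up to $\asymp X$, and the subconvexity error in Proposition~\ref{propddsasymp} does not survive the sum over such $K$. Concretely, for small $X_\tau$ the second error term $X_{\sigma^2}X_\tau^{1/2+\epsilon}|\Delta(K)P(\Lambda(T))|^{1/4-\delta}$, summed over the $\asymp D$ fields $K$ with $\D^{S'}(K)\sim D$, contributes $D^{5/4-\delta}X_{\sigma^2}X_\tau^{1/2}$; carrying out your dyadic sums in $X_{\sigma^2}$ and $D$ under the height constraint gives a total of size $X^{5/4-\delta}X_\tau^{-3/4+\delta}$, which is far too large. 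The same blow-up occurs for small $X_{\sigma^2}$ via the other error term. The mean-value input for $L(1,\chi_K)$ controls only the main term and the first error term, not this one; the stated savings of $T^{-1}$ from the congruence $T\mid\Delta(K)$ cannot compensate for a loss of a positive power of $X$.

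The paper sidesteps this precisely by \emph{not} using Proposition~\ref{propddsasymp} in these two regimes. For small $\q_\tau$, it invokes the outer-automorphism flip $(L,K)\mapsto(\varphi_4(L),\varphi_2(K))$, which interchanges $\D^{S'}$ and $\q_\tau^{S'}$, thereby reducing to the already-handled small-$\D$ case where $|\Delta(K)|\ll T(\log X)^A$. For small $\q_{\sigma^2}$, it fibres over the value $q=\q_{\sigma^2}$ and over a dyadic range for $\D$, uses the flip again to arrange $\D\leq \q_\tau$, and then appeals to an external uniformity estimate (\cite[Proposition~8.2]{D4preprint}) rather than to the Dirichlet-series machinery. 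Both of these moves are essential: without restricting $|\Delta(K)|$ to be $O(X^{1/2+\epsilon})$ or smaller, the analytic error terms in \S4 are simply not strong enough.
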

\noindent Before we prove the above proposition, we note that we expect (and will later prove!) the growth rate $N_{D_4}(\Lambda;X)\asymp_\Lambda X(\log X)^2$. We expect the same growth rate (with perhaps some factors of $\log T$) for $N_{D_4}(\Lambda(T);X)$, since the $S(\Lambda(T))$-height of pairs of fields $(L,K) \in \FF_4(\Lambda(T))$ ignores primes dividing $T$. Thus the above proposition will imply that the cuspical contribution to $N_{D_4}(\Lambda(T);X)$ is negligible.
\begin{proof}
Note from Remark \ref{remTR} that $B_{\Lambda(T)}(K,\alpha)=0$ unless
$K(\sqrt{\alpha})\in\FF_4^\un(\Lambda(T),K)$. Thus, to analyze
$\Phi_{\Lambda(T)}(s;K)$ and $\Phi_{\Lambda(T)}(s,t;K)$, it is enough
to sum over those $\alpha$ such that $K(\sqrt{\alpha})$ is unramified
outside the places above $\Lambda(T)$. Moreover, the size of
$B_{\Lambda(T)}(K,\alpha)$ is independent of $T$.

We begin by bounding the number of pairs $(L,K)\in\FF_4(\Lambda(T))$
with $S(\Lambda(T))$-height less than $X$ and $|\Delta(K)|/T\ll_\Lambda
\D^{S(\Lambda(T))}(L,K)\leq (\log X)^A$.  For $i=1,2$ and $K(\sqrt{\alpha})\in
\FF_4^\un(\Lambda(T),K)$ we have:
\begin{equation}\label{eqtrresbound}
  \Res_{s=1}^{(i)} \cD_{\Lambda(T)}(s,s;K) \mbox{ and } \Res_{s=1} \cD_{\Lambda(T)}(s,s;K,\alpha)
  \ll_{\Lambda,\epsilon} |\Delta(K)|^\epsilon\ll_{\Lambda,A,\epsilon} (\log X)^\epsilon T^\epsilon,
\end{equation}
by \eqref{eqFfunc}, \eqref{eqFfunc1}, and Brauer--Siegel.
Theorem \ref{thmsec2main} expresses $\Phi_{\Lambda(T)}(K,s)$ in terms
of a weighted sum over $K(\sqrt{\alpha})\in\FF_4^\un(\Lambda(T),K)$ of
$\cD_{\Lambda(T)}(s,s;K,\alpha)$, where these weights are bounded
independently of $T$. Therefore, the number of pairs
$(L,K)\in\FF_4(\Lambda(T))$ with height less than $X$ and
$|\Delta(K)|=\D(L,K)\leq (\log X)^A$ is
\begin{equation*}
\ll\sum_{\substack{|\Delta(K)^{S(\Lambda(T))}|\leq (\log X)^A\\\text{and }\, T\mid\Delta(K)}}
\left(a_n(K)\phi\Bigl(\frac{n|\Delta(K)^{S(\Lambda(T))}|}{X}\Bigr)
+\sum_{K(\sqrt{\alpha})\in\FF_4^\un(\Lambda,K)}a_n(K,\alpha)\phi\Bigl(
\frac{n|\Delta(K)^{S(\Lambda(T))}|}{X}\Bigr)\right),
\end{equation*}
where $\phi:\R_{\geq 0}\to\R_{\geq 0}$ is a compactly supported smooth
function with $\phi(t)=1$ for $t\in[0,1]$, and $a_n(K)$ \& $a_n(K,\alpha)$ are as in Definition \ref{anK}. The above quantity is then
estimated using Lemma \ref{residues} and \eqref{eqtrresbound} to be
\begin{equation*}
\begin{array}{rcl}
&\ll_{\Lambda,A,\epsilon}&\displaystyle
\sum_{\substack{|\Delta(K)|^{S(\Lambda(T))}\leq (\log X)^A\\T\mid\Delta(K)}}
(\log X)^\epsilon T^\epsilon \frac{X}{|\Delta(K)|^{S(\Lambda(T))}}(\log X)
\\[.2in]&\ll_{\Lambda,A,\epsilon}&\displaystyle
T^\epsilon X(\log X)^{1+\epsilon},
\end{array}
\end{equation*}
as necessary.

The required bound on the number of pairs $(L,K)\in\FF_4(\Lambda(T))$
with height less than $X$ and $\q_\tau^{S(\Lambda(T))}(L,K)\leq (\log
X)^A$ follows immediately from the above since the flip
$(L,K)\mapsto(\phi_4(L,K),\phi_2(L,K))$ interchanges the invariants
$\D^{S(\Lambda(T))}$ and $\q_\tau^{S(\Lambda(T))}$.

Finally, we consider the set of pairs $(L,K)\in\FF_4(\Lambda(T))$ with
height less than $X$ and $\q_{\sigma^2}^{S(\Lambda(T))}(L,K)\leq (\log
X)^A$. We first note that it is enough to prove the bound
$O_\epsilon(T^\epsilon X(\log X)^{1+\epsilon})$ for pairs $(L,K)$
whose heights are between $X$ and $2X$. Indeed, the required bound for
pairs with height less than $X$ would follow by simply dividing
$[1,X]$ into dyadic ranges $[X/2^k,X/2^{k+1}]$ and summing the bounds
$1/2^kT^\epsilon X(\log X)^{1+\epsilon}$ over $k$. We fiber over possible
values $q\leq (\log X)^A$ of the invariant $\q_{\sigma^2}$. For each
fixed $q$, we also fix a dyadic range $[Y,2Y]$ for values of
$\D^{S(\Lambda(T))}$ to lie in. Having fixed $q$ and $Y$, it follows
that the various invariants of the pairs $(L,K)$ satisfy these
estimates:
\begin{equation}\label{eqinvrange}
\D(L,K)\asymp_\Lambda TY,\quad \q_\tau(L,K)\asymp_\Lambda \frac{X}{qY},\quad
\q(L,K)=\q_\sigma(L,K)\q_\tau(L,K)q_{\sigma^2}^2(L,K)\asymp_\Lambda\frac{qXT}{Y},
\end{equation}
where the bound on $\q_\tau(L,K)$ follows from the bound on the $S(\Lambda(T))$-height $H = H^{S(\Lambda(T))}$.
As before, by instead counting the flip of the $D_4$-pairs $(L,K)$ if
necessary, we may assume that $TY\leq qXT/Y$. Then \cite[Proposition
  8.2]{D4preprint} implies that the number of $D_4$-pairs in
$\FF_4(\Lambda(T))$ with invariants as in \eqref{eqinvrange} is
bounded by $O_\epsilon(T^\epsilon X/q^{1-\epsilon})$. Adding up over
dyadic ranges $[Y,2Y]$ gives an additional factor of $\log X$, and
adding up over $q\leq (\log X)^A$ gives an additional factor of $(\log
X)^{\epsilon A}$. This gives a total bound of
$O_{\Lambda,A,\epsilon}(T^\epsilon X(\log X)^{1+\epsilon})$, as
necessary.
\end{proof}

\subsection{Estimating the number of $D_4$-pairs in the main body}

Let $\Lambda$ and $T$ be as in the previous subsection, and let $Y$,
$X_{\sigma^2}$, and $X_\tau$ be positive real numbers. Let
$0<\kappa\leq 1$ be fixed. For a
smooth function $\phi:\R_{\geq 0}\to \R_{\geq 0}$ of compact support,
we define
\begin{equation*}
N_{D_4,\phi}^{(\kappa)}(\Lambda(T);Y,X_{\sigma^2},X_\tau):=
\sum_{\substack{(L,K)\in\FF_{4}(\Lambda(T))\\\D^{S(\Lambda(T))}(K)\in[Y,(1+\kappa)Y]}}
\phi\Bigl(\frac{\q_{\sigma^2}^{S(\Lambda(T))}(L,K)}{X_{\sigma^2}}\Bigr)
\phi\Bigl(\frac{\q_{\tau}^{S(\Lambda(T))}(L,K)}{X_{\tau}}\Bigr).
\end{equation*}
For $K\in \FF_2(\Lambda(T))$, recall from Corollary \ref{eqScounttrip}
and Remark \ref{remTR} that the counting function for quartic pairs
$(L,K)\in\FF_4(\Lambda(T),K)$ is given by
\begin{equation*}
\begin{array}{rcl}
\Phi_{\Lambda(T)}(s,t;K)&=&\displaystyle
\sum_{L\in\FF_4(\Lambda(T),K)}\q_{\sigma^2}^{S(\Lambda)}(L,K)^{-s}
\q_{\tau}^{S(\Lambda)}(L,K)^{-t}\\[.2in]&=&\displaystyle
B_{\Lambda(T)}(K)\cD_{\Lambda(T)}(s,t;K)+\sum_{K(\sqrt{\alpha})\in\FF_4^\un(\Lambda(T),K)}
B_{\Lambda(T)}(K,\alpha)\cD_{\Lambda(T)}(s,t;K,\alpha),
\end{array}
\end{equation*}
where it is enough to sum over $K(\sqrt{\alpha})\in\FF_4^\un(\Lambda(T),K)$, and
where the sizes $|B_{\Lambda(T)}(K)|$ and $|B_{\Lambda(T)}(K,\alpha)|$
are independent of $T$. Therefore, we have
\begin{equation}\label{eqMBcotemp}
\begin{array}{rcl}
\displaystyle N_{D_4,\phi}^{(\kappa)}(\Lambda(T);Y,X_{\sigma^2},X_\tau,)&=&\displaystyle
\sum_{|\Delta^{S(\Lambda(T))}(K)|\in[Y,(1+\kappa)Y]}\Bigl(B_{\Lambda(T)}(K)\cN_{\Lambda(T),\phi}(X_{\sigma^2},X_\tau;K)
\\[.2in]&&\displaystyle
\qquad \qquad \quad+\sum_{K(\sqrt{\alpha})\in\FF_4^\un(\Lambda(T),K)}B_{\Lambda(T)}(K,\alpha)\cN_{\Lambda(T),\phi}(X_{\sigma^2},X_\tau;K,\alpha)\Bigr),
\end{array}
\end{equation}
where $\cN_{\Lambda,\phi}(X_{\sigma^2},X_{\tau};K)$ is defined in \eqref{twovardirichlet}.
We start by bounding the contribution to
$N_{D_4,\phi}^{(\kappa)}(\Lambda(T);Y,X_{\sigma^2},X_\tau)$ from
nontrivial $\alpha$.

\begin{proposition}\label{propntrivalB}
Recall the definition of $\delta$ from Lemma \ref{lemcent}. Let $X = YX_{\sigma^2}X_\tau$, and assume that
  $Y,X_{\sigma^2},X_\tau\gg (\log X)^A$ for some positive integer
  $A$. Assume also that $X_\tau\geq Y$.  Then we have
\begin{equation*}  
\sum_{|\Delta^{S(\Lambda(T))}(K)|\in[Y,(1+\kappa)Y]}
\sum_{K(\sqrt{\alpha})\in\FF_4^\un(\Lambda(T),K)}B_{\Lambda(T)}(K,\alpha)
\cN_{\Lambda(T),\phi}(X_{\sigma^2},X_\tau;K,\alpha)
\ll_{\Lambda,A,\epsilon} \frac{T^{1/2-2\delta+\epsilon}X}{(\log X)^{A/4}}.
\end{equation*}
\end{proposition}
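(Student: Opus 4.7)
The plan is to decompose the inner sum over $\alpha$ according to the Galois group of the normal closure of $K(\sqrt\alpha)$ over $\bQ$, which by Corollary~\ref{polesofZ} must be $V_4$, $C_4$, or $D_4$ (since $\alpha$ is nontrivial). In each case I will bound the number of admissible $\alpha$ per $K$, apply the corresponding subconvex estimate from Proposition~\ref{propddsasymp}, and sum over $K$.

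First I would note two uniform ingredients: $|B_{\Lambda(T)}(K,\alpha)| \ll_\Lambda 1$ independently of $T$ by Theorem~\ref{thmsec2main} and Remark~\ref{remTR}; and for any $K \in \FF_2(\Lambda(T))$ every prime $p \mid T$ is totally ramified in $K$, so $|\Delta(K)| \asymp_\Lambda TY$ and the number of such $K$ with $|\Delta^{S(\Lambda(T))}(K)| \in [Y,(1+\kappa)Y]$ is $\ll Y$. Next I would count admissible $\alpha$ per $K$ in each case. In the biquadratic case, $K(\sqrt\alpha) = KK'$ for some rational quadratic $K'$, and the unramifiedness of $K(\sqrt\alpha)/K$ outside $\cS_K(\Lambda(T))$ forces $K'$ to be ramified only at primes in $S(\Lambda(T)) \cup \{p : p \mid \Delta(K)\}$; a divisor-type bound yields $O_\epsilon((TY)^\epsilon)$ such $\alpha$. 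In the $C_4$ and $D_4$ cases, the total number of admissible $\alpha$ is bounded by $|\O_{K,\cS}^\times/(\O_{K,\cS}^\times)^2| = 2^{|\cS|} \ll (TY)^\epsilon$ by Dirichlet's unit theorem.

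Combining these counts with Proposition~\ref{propddsasymp}, the biquadratic contribution is at most
\[
\ll_\Lambda Y \cdot (TY)^\epsilon \cdot X_{\sigma^2} X_\tau^{1/2+\epsilon} (TY)^{1/4-\delta},
\]
which, using $X = YX_{\sigma^2}X_\tau$, $Y \le X_\tau$, and $X_\tau \gg (\log X)^A$, simplifies to $\ll T^{1/4-\delta+\epsilon} X (\log X)^{-A/4}$, well within the target for $\delta \le 1/4$. Similarly, the $C_4$/$D_4$ contribution is
\[
\ll_\Lambda Y \cdot (TY)^\epsilon \cdot X_{\sigma^2}^{1/2+\epsilon} X_\tau^{1/2+\epsilon} (TY)^{1/2-2\delta} = T^{1/2-2\delta+\epsilon} X \cdot X_{\sigma^2}^{-1/2+\epsilon} X_\tau^{-2\delta+\epsilon},
\]
which is $\ll T^{1/2-2\delta+\epsilon} X (\log X)^{-A/2}$ using $X_{\sigma^2} \gg (\log X)^A$. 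Taking the worse of the two yields the claimed bound.

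The main obstacle is the careful accounting of the $T$-dependence when invoking Proposition~\ref{propddsasymp} with $\Lambda$ replaced by $\Lambda(T)$: one must avoid a spurious extra $T^2$ factor from $P(\Lambda(T)) = T^2 P(\Lambda)$, noting that for $K \in \FF_2(\Lambda(T))$ every prime dividing $T$ is already totally ramified in $K$ and hence contributes to $|\Delta(K)|$. With this observation, the conductor bound of Proposition~\ref{lemLQpole} grows in $T$ only through the factor $|\Delta(K)| \asymp TY$. The dominant $T^{1/2-2\delta}$ exponent then comes from the $C_4$/$D_4$ subconvexity bound, and the polylogarithmic saving $(\log X)^{-A/4}$ arises from combining $Y \le X_\tau$ with the assumed lower bounds on $X_{\sigma^2}$ and $X_\tau$.
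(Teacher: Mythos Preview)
Your proof is correct and follows essentially the same route as the paper's. The paper's argument is terser: it observes (via Remark~\ref{remTR} and Definition~\ref{dirichletfunction}) that $\cD_{\Lambda(T)}(s,t;K,\alpha)$ is literally independent of $T$, so one may apply Proposition~\ref{propddsasymp} with $\Lambda$ rather than $\Lambda(T)$, and then simply cites the inequalities $X_\tau\ge Y$, $|\Delta(K)|\ll_\Lambda TY$, $X_{\sigma^2},X_\tau\ge(\log X)^A$. Your proposal makes the same reductions explicit---splitting into the $V_4$ versus $C_4/D_4$ cases, counting the number of admissible $\alpha$ via $2^{|\cS|}\ll(TY)^\epsilon$, and tracking the $T$-dependence through the conductor---but the underlying mechanism is identical.
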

\begin{proof}
From Remark \ref{remTR} and Definition \ref{dirichletfunction}, it follows that
the quantities $\cD_{\Lambda(T)}(s,t;K,\alpha)$ are independent of $T$, and hence
that $\cN_{\Lambda(T),\phi}(X_{\sigma^2},X_\tau;K,\alpha)$ are also
independent of $T$. The result now follows immediately from
Proposition \ref{propddsasymp}, applied to
$\cN_{\Lambda,\phi}(X_{\sigma^2},X_\tau;K,\alpha)$, along with the
following facts: the $B_{\Lambda(T)}(K,\alpha)$ are uniformly bounded
independent of $T$, $X_\tau\geq Y$, $|\Delta(K)|\ll_\Lambda TY$,
$YX_{\sigma^2}X_\tau=X$, and $X_{\sigma^2},X_\tau\geq (\log X)^A$.
\end{proof}

We next estimate
$N_{D_4}^{(\kappa)}(\Lambda(T);Y,X_\tau,X_{\sigma^2})$ from the
contribution of the trivial character.
\begin{thm}\label{thMBcount}
Denote the product $YX_{\sigma^2}X_\tau$ by $X$, and assume that
$Y,X_{\sigma^2},X_\tau\geq (\log X)^A$ and $Y\leq X_\tau$. Then we
have
\begin{equation*}
\displaystyle N_{D_4,\phi}^{(\kappa)}(\Lambda;Y,X_{\sigma^2},X_\tau)=
\displaystyle  \Vol(\phi)^2\Bigl(
\sum_{\substack{K\in\FF_2(\Lambda)\\Y\leq\Delta^{S(\Lambda(T))}(K)<(1+\kappa)Y}}
B_{\Lambda}(K)\Res_{s=1}^{(2)}\cD_{\Lambda}(s,s;K)\Bigr)
X_{\sigma^2}X_\tau +
O_{\Lambda,A,\epsilon}\Bigl(\frac{X}{(\log X)^{\frac{A}{4}}}\Bigr).
\end{equation*}
For $T>1$, we have the uniform bound
\begin{equation*}
N_{D_4}^{(\kappa)}(\Lambda(T);Y,X_{\sigma^2},X_\tau)\ll_{\Lambda,A,\epsilon}
XT^\epsilon+\frac{T^{1/2-\delta+\epsilon}X}{(\log X)^{\frac{A}{4}}}
\end{equation*}
\end{thm}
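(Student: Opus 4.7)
The plan is to start from the decomposition \eqref{eqMBcotemp} and handle the trivial and non-trivial character contributions separately. First, I would invoke Proposition \ref{propntrivalB} to bound the total contribution of the non-trivial characters in \eqref{eqMBcotemp} by $O_{\Lambda,A,\epsilon}(T^{1/2-2\delta+\epsilon} X / (\log X)^{A/4})$; this uses the standing hypotheses $X_\tau \geq Y$ and $Y, X_{\sigma^2}, X_\tau \gg (\log X)^A$, and is absorbed into the error term of both statements of the theorem (slightly weakening the exponent $1/2 - 2\delta$ to $1/2 - \delta$ for the uniform bound).

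Next, for the trivial character $\alpha = 1$ contribution, I would apply Proposition \ref{propddsasymp} to each summand $\cN_{\Lambda(T),\phi}(X_{\sigma^2}, X_\tau; K)$, extracting the main term $\Vol(\phi)^2 \Res^{(2)}_{s=1}\cD_{\Lambda(T)}(s,s; K) \cdot X_{\sigma^2} X_\tau$ together with the two error terms involving $\Res_{t=1}\zeta_K(t)$ and $|\Delta(K) P(\Lambda)|^{1/4-\delta}$. The factor $\Vol(\phi)^2$ arises as $\widetilde{\phi}(1)^2$ from the double Mellin inversion in the proof of that proposition. Summing over the $O(Y)$ quadratic fields $K \in \FF_2(\Lambda(T))$ with $|\Delta^{S(\Lambda(T))}(K)| \in [Y,(1+\kappa)Y]$, weighted by the uniformly bounded $B_{\Lambda(T)}(K)$, then gives the claimed main term when $T = 1$. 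For $T > 1$, I would instead bound this sum by $O(T^\epsilon X)$ using $|B_{\Lambda(T)}(K)| \ll 1$ together with the estimate \eqref{eqtrresbound}.

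The principal obstacle will be controlling the sum of the two errors from Proposition \ref{propddsasymp} across the $O(Y)$ fields $K$, each satisfying $|\Delta(K)| \ll TY$. For the first error, the Brauer--Siegel bound $\Res_{t=1}\zeta_K(t) \ll_\epsilon (TY)^\epsilon$ and the relation $X = Y X_{\sigma^2} X_\tau$ give $\ll T^\epsilon X / X_{\sigma^2}^{1/2-\epsilon}$, which is acceptable thanks to $X_{\sigma^2} \gg (\log X)^A$. For the second error, the hypothesis $Y \leq X_\tau$ is essential: summing $|\Delta(K)|^{1/4-\delta}$ over $K$ yields a total contribution of $\ll T^{1/4-\delta} X / X_\tau^{1/4+\delta-\epsilon}$, which, under $X_\tau \gg (\log X)^A$, becomes $O(T^{1/4-\delta} X / (\log X)^{A/4 + A\delta - A\epsilon})$ and matches $O(X/(\log X)^{A/4})$ when $T = 1$. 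Combining these estimates with the non-trivial character bound from Proposition \ref{propntrivalB} yields both statements of the theorem.
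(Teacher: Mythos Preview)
Your overall strategy matches the paper's, but there is a genuine gap in your handling of the first error term from Proposition~\ref{propddsasymp}, namely the sum
\[
\sum_{|\Delta^{S(\Lambda(T))}(K)|\in[Y,(1+\kappa)Y]} X_{\sigma^2}^{1/2+\epsilon} X_\tau \,\Res_{t=1}\zeta_K(t).
\]
You bound each residue individually via Brauer--Siegel as $\Res_{t=1}\zeta_K(t)\ll_\epsilon (TY)^\epsilon$ and then sum over the $O(Y)$ fields. This produces an extra factor of $Y^\epsilon$ that you silently drop when writing the final bound as $T^\epsilon X/X_{\sigma^2}^{1/2-\epsilon}$. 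That $Y^\epsilon$ is fatal: for $T=1$ and $Y$ comparable to $X$ (which happens when $X_{\sigma^2},X_\tau\asymp(\log X)^A$), your error is of size $X^{1+\epsilon}/(\log X)^{A(1/2-\epsilon)}$, which is \emph{not} $O(X/(\log X)^{A/4})$ for any fixed $\epsilon>0$. The paper avoids this by invoking Siegel's average result \cite[\S3]{MR12642}, which gives
\[
\sum_{|\Delta^{S(\Lambda(T))}(K)|\in[Y,(1+\kappa)Y]} \Res_{s=1}\zeta_K(s) \;=\; O(Y)
\]
(at least when $T<Y^\theta$; when $T\geq Y^\theta$ one has $Y\leq T^{1/\theta}$, so the individual Brauer--Siegel bound becomes $\ll T^{\epsilon'}$ and is then acceptable for the uniform bound).

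A parallel issue affects your argument for the uniform bound in the second claim: citing \eqref{eqtrresbound} gives $\Res_{s=1}^{(2)}\cD_\Lambda(s,s;K)\ll (TY)^\epsilon$, and summing over $O(Y)$ fields again leaves you with $(TY)^\epsilon X$ rather than $T^\epsilon X$. The paper instead appeals forward to the next subsection, where the \emph{average} of $\Res_{s=1}^{(2)}\cD_\Lambda(s,s;K)$ over $K$ is shown to be $O(1)$, so that the weighted sum of residues is $O(Y)$ without any $Y^\epsilon$ loss.
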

\begin{proof}
From \eqref{eqMBcotemp} and Proposition \ref{propntrivalB} we have
\begin{equation*}
N_{D_4,\phi}^{(\kappa)}(\Lambda(T);Y,X_{\sigma^2},X_\tau,)=
\sum_{|\Delta^{S(\Lambda(T))}(K)|\in[Y,(1+\kappa)Y]}
B_{\Lambda(T)}(K)\cN_{\Lambda(T),\phi}(X_{\sigma^2},X_\tau;K)+O_{\Lambda,A,\epsilon}
\Bigl(\frac{T^{1/2-2\delta+\epsilon}X}{(\log X)^{A/4}}\Bigr).
\end{equation*}
Estimating $\cN_{\Lambda(T),\phi}(X_{\sigma^2},X_\tau;K)$ using
Proposition \ref{propddsasymp} gives us the required main term, and it
only remains to bound the error. Since the $B_{\Lambda(T)}(K)$ are
bounded independent of $T$, this error term is
\begin{equation*}
  \ll_{\Lambda,A,\epsilon}\frac{T^{1/2-2\delta+\epsilon}X}{(\log X)^{A/4}}+\sum_{|\Delta^{S(\Lambda(T))}(K)|\in[Y,(1+\kappa)Y]}
  (X_{\sigma^2}^{1/2+\epsilon}X_\tau\Res_{s=1}\zeta_K(s)
  +X_{\sigma^2}X_\tau^{1/2+\epsilon}(YT)^{1/4-\delta}.
\end{equation*}
The first summand above is clearly small enough. The sum over $K$ of the second summand $X_{\sigma^2}X_\tau^{1/2+\epsilon}(YT)^{1/4-\delta}$ is also small enough
since $Y\leq X_\tau$.

We now consider the first summand in the sum over $K$, and split into two cases. Let
$\theta>0$ be a small positive constant. First suppose that
$T<Y^\theta$. In this case, arguments identical to those in
\cite[\S3]{MR12642} show that the sum over $K\in\FF_2(\Lambda(T))$ of
$\Res_{s=1}\zeta_K(s)$ is bounded by $O(Y)$, which gives a sufficient
bound on the error term. On the other hand, if $T\geq Y^\theta$, then
we simply bound $\Res_{s=1}\zeta_K(s)$ by $O(T^\epsilon)$, which is
also sufficiently small. The first claim of the theorem follows
immediately, and the second claim follows by choosing $\phi$ to be a
positive smooth function that dominates the characteristic function of
$[1,1+\kappa]$, and noting that the weighted sum of residues of
$\cD_{\Lambda(T)}(s,s,K)=\cD_{\Lambda}(s,s,K)$ is bounded by $O(Y)$ in
the next subsection.
\end{proof}

\subsection{Computing the constant}

In this section, we evaluate the leading constant in the main term of
Theorem \ref{thMBcount}. Analogously to the quantity
$\Mass^\flat(\Lambda)$, we define
\begin{equation*}
\Mass_C^\flat(\Lambda)=\Bigl(\sum_{(L,K)\in\Lambda_\infty}\frac{1}{\#\Aut(L,K)}\Bigr)
\prod_{p\in S(\Lambda)} \Bigl(\sum_{(L,K)\in\Lambda_p}
\frac{1}{\#\Aut(L,K)}\Bigr)\Bigl( 1-\frac{1}{p}\Bigr)^2.
\end{equation*}
For a positive real number $I>\max\{p\in S(\Lambda)\}$, we define the function
\begin{equation*}
  D_\Lambda^{(I)}(s;K):=
  \prod_{p\leq I}\cD_{\Lambda,p}(s,s;K)\cdot\prod_{p>I}\cD_{\Lambda,p}(s,2s;K).
\end{equation*}
Then the function $D^{(I)}_\Lambda(s;K)$ has a single pole at $s=1$.  We begin
with a result computing the average residues of $D^{(I)}_\Lambda(s;K)$ over
$K\in \FF_2(\Lambda)$.
\begin{lemma}\label{lemauxres}
Let $Y\to\infty$, we have
\begin{equation*}
  \sum_{\substack{K\in\FF_2(\Lambda)\\Y\leq \Disc^{S(\Lambda)}(K)<\kappa_1Y}}
  \frac{B_\Lambda(K)\Res^{(1)}_{s=1}\cD^{(I)}_\Lambda(s;K)}{\kappa Y}\to
\frac12\Mass^\flat_C(\Lambda)
\prod_{{\scriptstyle p\leq I \atop \scriptstyle p\not\in S(\Lambda)}}\Bigl(1+\frac{3}{p}\Bigr)\Bigl(1-\frac{1}{p}\Bigr)^2
\prod_{p>I}\Bigl(1-\frac{1}{p^2}\Bigr)^2.
\end{equation*}  
\end{lemma}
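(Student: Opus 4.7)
The plan is to factor $\Res^{(1)}_{s=1}\cD^{(I)}_\Lambda(s;K)$ as $L(1,\chi_K)$ times an explicit Euler product depending only on splitting data of $K$, and then to evaluate the resulting sum via a classical averaging theorem for $L(1,\chi_K)$ with local conditions.

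First, I would use the identity $\cD_{\Lambda,p}(s,2s;K)=\zeta_{K,p}(s)/\zeta_{K,p}(2s)$ for every $p\notin S:=S(\Lambda)\cup\{p:p\mid\Delta(K)\}$ (which follows from Definition \ref{dirichletfunction} with $\alpha=1$) to complete the tail $\prod_{p>I,\,p\notin S}\cD_{\Lambda,p}(s,2s;K)$ to the global ratio $\zeta_K(s)/\zeta_K(2s)$. This yields
\begin{equation*}
\cD^{(I)}_\Lambda(s;K)=\frac{\zeta_K(s)}{\zeta_K(2s)}\cdot\prod_{p\in S}\frac{\zeta_{K,p}(2s)}{\zeta_{K,p}(s)}\cdot\prod_{\substack{p\leq I\\ p\notin S}}\frac{\cD_{\Lambda,p}(s,s;K)\,\zeta_{K,p}(2s)}{\zeta_{K,p}(s)}.
\end{equation*}
Since the only pole at $s=1$ comes from $\zeta_K(s)$, taking residues gives
\begin{equation*}
\Res^{(1)}_{s=1}\cD^{(I)}_\Lambda(s;K)=L(1,\chi_K)\cdot G_S(K)\cdot G_I(K),
\end{equation*}
where $G_S(K)$ and $G_I(K)$ are finite products of local factors, each depending only on the decomposition type of a single prime in $K$.

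Second, I would unpack the local factors explicitly via Definition \ref{dirichletfunction}. For $p\leq I$ with $p\notin S(\Lambda)$, so that $p$ is automatically unramified in $K$, a direct case analysis on split/inert shows that after absorbing the $p$-part of $\zeta_K(2)^{-1}$, the factor simplifies to $(1+3/p)(1-1/p)^2$ if $p$ splits in $K$ and to $(1+1/p)(1-1/p^2)$ if $p$ is inert. The factors at $p\in S(\Lambda)$ combine with $B_\Lambda(K)$ (whose structure is recorded in \eqref{eqdefBLK}) to produce the expected $\Mass_C^\flat(\Lambda)$, while the factors at ramified primes $p\notin S(\Lambda)$ rearrange into the tail product $\prod_{p>I}(1-1/p^2)^2$.

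Third, conditioning on the splitting class of $K$ at the finite set of primes $T=S(\Lambda)\cup\{p\leq I\}$, I would invoke the classical averaging
\begin{equation*}
\sum_{\substack{K\in\FF_2(\Lambda)\text{ with fixed splitting at }T\\Y\leq|\Delta^{S(\Lambda)}(K)|<\kappa_1 Y}} L(1,\chi_K)\;\sim\;c_T\cdot\kappa Y \qquad(Y\to\infty),
\end{equation*}
where $c_T$ factors as a product of local densities, with the density at each $p\notin T$ coming from averaging the $p$th Euler factor of $L(1,\chi_K)$ over the splitting type of $K$ at $p$. This asymptotic is obtained by forming the Dirichlet series $Z(w)=\sum_K L(1,\chi_K)\lvert\Delta^{S(\Lambda)}(K)\rvert^{-w}$ (with the splitting condition at $T$), expanding $L(1,\chi_K)=\sum_n\chi_K(n)/n$, swapping order of summation, writing the result as an Euler product via the structure of quadratic characters attached to quadratic fields, locating its simple pole at $w=1$, and applying a Tauberian theorem.

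Finally, summing over splitting classes at primes in $T$ and combining with the Step~2 local factors, the claimed formula drops out; the factor $\tfrac12$ arises from the two-to-one relationship between nontrivial quadratic characters $\chi$ and quadratic extensions implicit in the bijection of Proposition \ref{corbij}. The main obstacle will be the uniformity of the averaging in Step~3 across the $2^{|T|}$ splitting conditions on $K$ at primes in $T$: the Tauberian analysis must yield error terms small enough to survive the sum over these classes, which requires effective control over the pole of $Z(w)$ in terms of the local conditions. A careful bookkeeping of the local densities obtained in Step~2 against those produced by $c_T$ in Step~3 will also be needed to verify the precise form of the stated product.
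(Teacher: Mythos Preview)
Your approach is genuinely different from the paper's, and while the overall strategy is sound, there is a gap in the local bookkeeping.

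\textbf{What the paper does instead.} The paper never factors out $L(1,\chi_K)$ or averages it. Instead, it observes that $B_\Lambda(K)\cD^{(I)}_\Lambda(s;K)$, summed over characters as in Corollary~\ref{eqScounttrip}, is the counting Dirichlet series for pairs $(L,K)\in\FF_4(\Lambda)$ ordered by a modified height $\H_I$ (which is the usual $\q$ except that at primes $p\leq I$ with $I_p\sim\langle\sigma^2\rangle$ one records $p$ instead of $p^2$). The sum over $K$ of residues is then, by a Tauberian step identical to \S5.1--5.2, asymptotically the count of pairs $(L,K)$ with $\D^{S(\Lambda)}$ in $[Y,(1+\kappa)Y)$ and $\H_I$ in $[Q,(1+\kappa)Q)$, divided by $\kappa Q$. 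This count is evaluated by fibering over the divisor $n\mid P_I$ recording which primes $p\leq I$ have $I_p\sim\langle\sigma^2\rangle$; for each fixed $n$ the count is a conductor-type count handled directly by \cite[Theorems~6.9 and 7.9]{D4preprint}. Summing over $n$ produces the factor $\prod_{p\leq I}(1+3/p)(1-1/p)^2$. So the paper reduces to its earlier geometry-of-numbers results rather than to an $L(1,\chi_K)$ average.

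\textbf{The gap in your Step 2.} Your claim that ``$p\leq I$ with $p\notin S(\Lambda)$'' implies $p$ is unramified in $K$ is false: $S(\Lambda)$ is fixed by $\Lambda$, while the ramified primes of $K$ vary with $K$ and can lie anywhere below $I$. Likewise, the assertion that the factors at ramified primes ``rearrange into the tail product $\prod_{p>I}(1-1/p^2)^2$'' is not right as written: the tail product is over a fixed set of primes, while the ramified primes of $K$ are a varying, unbounded set supported mostly well above $I$. What actually happens is that the ramified-prime correction (of size roughly $\prod_{p\mid\Delta(K)}(1+1/p)^{-1}$, up to harmless factors) must be absorbed into the $K$-average itself rather than pulled out as a constant. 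Your Step~3 can accommodate this if you form the Dirichlet series $\sum_K L(1,\chi_K)\cdot(\text{local weight})\cdot|\Delta^{S(\Lambda)}(K)|^{-w}$ with the ramified-prime weight included in the Euler product from the outset, but then the Euler factor at each $p>I$ is no longer simply the average of the $p$th factor of $L(1,\chi_K)$ over split/inert; it must also include the ramified branch, and it is this threefold average that produces $(1-1/p^2)^2$. Your explanation of the $\tfrac12$ via Proposition~\ref{corbij} is also misplaced: that proposition concerns quadratic extensions of $K$, whereas here the $\tfrac12$ comes from the count of quadratic fields $K/\Q$ (i.e., from the density of fundamental discriminants), exactly as in \cite[Theorems~6.9, 7.9]{D4preprint}.

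In short, your route can be made to work but requires reorganizing Step~2 so that the ramified-prime factors live inside the averaging of Step~3 rather than outside it. The paper sidesteps all of this by appealing to its prior conductor-ordering count.
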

\begin{proof}
The function $\cD^{(I)}_\Lambda(s;K)$ is the counting function for the set of
quadratic extensions $L$ of $K$ in $\FF_4(\Lambda)$, ordered by $\H_I$, where for such a pair $(L,K)$, we have
\begin{equation*}
  \H_I(L,K)\, :=\, \q_{\tau}^{S(\Lambda)}(L,K)
    {\q_{\sigma^2}^{S(\Lambda)}(L,K)}
    {\q_{\sigma^2}^{S_I\cup S(\Lambda)}(L,K)} \, =
    \, \, \frac{\q_{\tau}^{S(\Lambda)}(L,K)
      \cdot \q_{\sigma^2}^{S(\Lambda)}(L,K)^2}{P_{I,\sigma^2}}
\end{equation*}
where $S_I$ denotes the set of primes $p$ with $p\leq I$ and $p\not\in
S(\Lambda)$, and where $P_{I,\sigma^2} = \displaystyle\prod_{\substack{p\mid
    S_I\\I_p \sim \langle\sigma^2\rangle}}p$. Applying the counting
techniques of the previous two sections to the function
$\cD_\Lambda^{(I)}(s;K)$ yields
\begin{equation}\label{slide}
\begin{array}{rcl}
&&\displaystyle (\kappa Q)\cdot
\sum_{\substack{K\in\FF_2(\Lambda)\\Y\leq \Disc^{S(\Lambda)}(K)<(1+\kappa)Y}}
\Res^{(1)}_{s=1}\cD_\Lambda^{(I)}(s;K)
\\[.2in]&\sim &
\displaystyle\sum_{n\mid P_I} \#\Biggr\{(L,K) \in \cF_4(\Lambda):
\begin{array}{lcl} 
\multicolumn{3}{l}{ Y \leq |\Disc^{S(\Lambda)}(K)| < (1+\kappa)Y,} \vspace{1pt} \\
\multicolumn{3}{l}{ nQ \leq \q_{\tau}^{S(\Lambda)}(L,K)\cdot\q_{\sigma^2}^{S(\Lambda)}(L,K)^2<(1 + \kappa)nQ,}\vspace{1pt}\\
 p\mid \, n&\Rightarrow& I_p(L)\sim \langle \sigma^2 \rangle,\text{ and }\\
 p\mid \frac{P_{I,\sigma^2}}{n}&\Rightarrow&I_p(L)\not\sim\langle\sigma^2\rangle
\end{array}\hspace{-5pt}
\Biggr\},
\end{array}
\end{equation}
for $Y\leq Q$, with $Y$ tending to infinity. We omit the details of
the proof since they are identical to those of the previous
subsection.

For any fixed $n$, asymptotics for the summand corresponding to $n$ in
the right hand side of \eqref{slide} are computed using Theorems 6.9
and 7.9 of \cite{D4preprint} to be
\begin{equation*}
\begin{array}{rl}
\sim&
\displaystyle \kappa^2Y \cdot nQ \cdot
\frac12 \cdot \Mass^\flat_C(\Lambda)
\cdot\prod_{\substack{p\leq I\\p\nmid n \\ p\not\in S(\Lambda)}} \Bigl(1+\frac{2}{p}\Bigr)
\Bigl(1-\frac{1}{p}\Bigr)^2
\cdot\prod_{p>I}\Bigl(1-\frac{1}{p^2}\Bigr)^2\\
=&\displaystyle
\frac{\kappa^2YQ}{2}\cdot \Mass^\flat_C(\Lambda)
\cdot\prod_{p\mid n}\frac{1}{p}\Bigl(1+\frac{2}{p}\Bigr)^{-1}
\cdot\prod_{\substack{p\leq I \\ p\not\in S(\Lambda)}} \Bigl(1+\frac{2}{p}\Bigr)
\Bigl(1-\frac{1}{p}\Bigr)^2
\cdot\prod_{p>I}\Bigl(1-\frac{1}{p^2}\Bigr)^2.
\end{array}
\end{equation*}
Since we have
\begin{equation*}
\sum_{n\mid P_{I,\sigma^2}}\prod_{p\mid n}\frac{1}{p}\Bigl(1+\frac{2}{p}\Bigr)^{-1}
=\prod_{p\leq I,p\not\in S(\Lambda)}\Bigl(1+\frac{1}{p}\bigl(1+\frac{2}{p}\bigr)^{-1}
\Bigr),
\end{equation*}
summing over $n$ and dividing by $\kappa Q$ yields
$$\sum_{\substack{K\in\FF_2(\Lambda)\\\Disc(K)\leq Y}}
\Res^{(1)}_{s=1}\cD_\Lambda^{(I)}(s;K)
  \sim \frac{\kappa Y}{2}\cdot \Mass^\flat_C(\Lambda)
\cdot\prod_{\substack{p\leq I \\ p\not\in S(\Lambda)}} \Bigl(1+\frac{3}{p}\Bigr)
\Bigl(1-\frac{1}{p}\Bigr)^2
\cdot\prod_{p>I}\Bigl(1-\frac{1}{p^2}\Bigr)^2.$$
This concludes the proof of the lemma.
\end{proof}

Next, we compute the weighted average of the double pole residues.

\begin{prop}\label{propConstant}
We have
\begin{equation*}
\lim_{Y\to\infty}\frac{1}{\kappa Y}\sum_{\substack{F\in\FF_2(\Sigma)\\Y<|\Delta(F)|\leq
    (1+\kappa)Y}} B_\Lambda(K) \Res^{(2)}_{s=1}\cD_\Lambda(s,s;K)=
\frac12\Mass^\flat(\Lambda) \prod_{p\not\in
  S(\Lambda)}\Bigl(1+\frac{3}{p}\Bigr)\Bigl(1-\frac{1}{p}\Bigr)^3.
\end{equation*}
\end{prop}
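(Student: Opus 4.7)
The strategy is to reduce the double pole residue of $\cD_\Lambda(s,s;K)$ to the simple pole residue of $\cD_\Lambda^{(I)}(s;K)$ via an Euler-product factorization, and then invoke Lemma \ref{lemauxres}. The key identity is
\begin{equation*}
\cD_\Lambda(s,s;K) = \cD_\Lambda^{(I)}(s;K)\cdot R_I(s;K),\qquad R_I(s;K):=\prod_{\substack{p>I\\ p\text{ split in }K}}\frac{\cD_{\Lambda,p}(s,s;K)}{\cD_{\Lambda,p}(s,2s;K)},
\end{equation*}
where the ratio at inert $p$ is identically $1$. Using the Euler factors from Definition \ref{dirichletfunction}, the split-$p$ ratio equals $(1+3/p^s)/(1+1/p^s+2/p^{2s})=(1-1/p^s)^{-2}(1+O(1/p^{2s}))$, so $R_I(s;K)$ factors as a ``split part of $\zeta_K$ above $I$'' times an Euler product that is absolutely convergent at $s=1$. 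In particular, $R_I(s;K)$ has a simple pole at $s=1$ whose residue is expressible in terms of $L(1,\chi_K)$ and convergent local products over the split, inert, and ramified primes of $K$. Residue algebra then yields the basic identity $\Res^{(2)}_{s=1}\cD_\Lambda(s,s;K) = \Res_{s=1}R_I(s;K)\cdot\Res^{(1)}_{s=1}\cD_\Lambda^{(I)}(s;K)$.

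Substituting this into the left-hand side of the proposition reduces the claim to evaluating the joint average
\begin{equation*}
\frac{1}{\kappa Y}\sum_{\substack{K\in\FF_2(\Lambda)\\ Y<|\Delta(K)|\leq(1+\kappa)Y}}B_\Lambda(K)\cdot\Res_{s=1}R_I(s;K)\cdot\Res^{(1)}_{s=1}\cD_\Lambda^{(I)}(s;K).
\end{equation*}
Partitioning fields $K$ by their splitting data at primes $p\leq I$ and applying Lemma \ref{lemauxres} on each class supplies the asymptotic for the $\cD_\Lambda^{(I)}(s;K)$ factor. The $\Res_{s=1}R_I(s;K)$ factor, being essentially $L(1,\chi_K)$ times an absolutely convergent Euler product, is evaluated via classical mean-value theorems for $L(1,\chi_K)$ with prescribed local conditions. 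Chebotarev equidistribution ensures that the splitting data of $K$ at primes $\leq I$ and $>I$ decouple as $Y\to\infty$, so the joint average factors asymptotically into an Euler product over $p$.

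Taking $I\to\infty$ and collecting the local contributions, the per-prime Euler factors at $p\notin S(\Lambda)$ assemble to $(1+3/p)(1-1/p)^3$, while the contributions at $p\in S(\Lambda)$ combine $\Mass^\flat_C(\Lambda)$ with an additional $\prod_{p\in S(\Lambda)}(1-1/p)$ into $\Mass^\flat(\Lambda)$, yielding the constant $\frac{1}{2}\Mass^\flat(\Lambda)\prod_{p\notin S(\Lambda)}(1+3/p)(1-1/p)^3$. The main obstacle is to rigorously justify the decoupling of splitting data and the interchange of limits $I\to\infty$ and $Y\to\infty$, which requires careful control of error terms via the subconvexity bounds of Lemma \ref{lemcent} together with uniform mean-value estimates for $L(1,\chi_K)$ with prescribed local conditions.
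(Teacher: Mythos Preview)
Your factorization $\cD_\Lambda(s,s;K)=\cD_\Lambda^{(I)}(s;K)\cdot R_I(s;K)$ and the residue identity $\Res^{(2)}_{s=1}\cD_\Lambda(s,s;K)=\Res_{s=1}R_I(s;K)\cdot\Res^{(1)}_{s=1}\cD_\Lambda^{(I)}(s;K)$ are both correct, and the paper begins from essentially the same place. But you then take a considerably harder route than the paper and do not complete it.

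The paper's key observation is that
\[
\Res^{(2)}_{s=1}\cD_\Lambda(s,s;K)=\frac{\cD_\Lambda(s,s;K)}{\zeta(s)^2}\Big|_{s=1}=\lim_{I\to\infty}\Bigl[\Res^{(1)}_{s=1}\cD_\Lambda^{(I)}(s;K)\cdot\prod_{p\leq I}\bigl(1-\tfrac{1}{p}\bigr)\Bigr],
\]
where the factor $\prod_{p\leq I}(1-1/p)$ is \emph{independent of $K$}. Once the interchange of the limit in $I$ with the average over $K$ is justified, the entire computation reduces to Lemma~\ref{lemauxres} multiplied by this explicit constant, and one simply lets $I\to\infty$ in the resulting Euler product. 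No mean-value theorem for $L(1,\chi_K)$ is ever invoked.

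By contrast, you keep $I$ fixed and try to average the product $\Res_{s=1}R_I(s;K)\cdot\Res^{(1)}_{s=1}\cD_\Lambda^{(I)}(s;K)$ over $K$ directly. The factor $\Res_{s=1}R_I(s;K)$ carries $L(1,\chi_K)$ together with Euler data at \emph{all} primes of $K$, and $\Res^{(1)}_{s=1}\cD_\Lambda^{(I)}(s;K)$ also depends on the splitting of $K$ at primes $p>I$ through its convergent tail. The two factors are globally correlated (their product is the $I$-independent quantity $\Res^{(2)}$), so Chebotarev equidistribution at primes $\leq I$ does not by itself decouple them. The mean-value input you would actually need is an average of $L(1,\chi_K)$ twisted by an infinite $K$-dependent Euler product, which is not a standard citable result. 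You flag this correctly as the main obstacle but do not resolve it, so as written there is a genuine gap. The simplification you are missing is precisely that, in the limit $I\to\infty$, your $\Res_{s=1}R_I(s;K)$ may be replaced by the $K$-independent constant $\prod_{p\leq I}(1-1/p)$, which is what allows the paper to bypass all of the machinery you are reaching for.
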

\begin{proof}
First note that we have
\begin{equation*}
\begin{array}{rcl}
\displaystyle\Res_{s=1}^{(2)}\cD_\Lambda(s,s;K)&=&\displaystyle
\frac{\cD_\Lambda(s,s;K)}{\zeta(s)^2}\Bigl|_{s=1}
\, \, =\, \, \displaystyle
\prod_{p}\Bigl(1-\frac{1}{p}\Bigr)^2\cD_{\Lambda,p}(s,s;K)\Bigl|_{s=1}
\\[.2in]&=&\displaystyle
\lim_{I\to\infty}\prod_{p\leq I}\Bigl(1-\frac{1}{p}\Bigr)^2\cD_{\Lambda,p}(s,s;K)
\cdot\prod_{p>I}\Bigl(1-\frac{1}{p}\Bigr)\cD_{\Lambda,p}(s,2s;K)\Bigl|_{s=1}
\\[.2in]&=&\displaystyle
\lim_{I\to\infty}\Bigl[\Res_{s=1}^{(1)}\cD^{(I)}_\Lambda(s;K)\cdot
\prod_{p\leq I}\Bigl(1-\frac{1}{p}\Bigr)\Bigr].
\end{array}
\end{equation*}
The convergence in the above equation is absolute and uniform in $K$
since the $n$th Dirichlet coefficients in all the associated Dirichlet
series are uniformly bounded by $O_\epsilon(n^\epsilon)$, and
supported on powerfull integers $n$. Applying Lemma \ref{lemauxres},
we see that the left hand side of the displayed equation in the lemma
tends to
\begin{equation*}
\frac12\cdot
\Mass^\flat_C(\Lambda)\cdot\prod_{p\in S(\Lambda)}
\Bigl(1-\frac{1}{p}\Bigr)\cdot
\prod_{p\not\in S(\Lambda)}
\Bigl(1+\frac{3}{p}\Bigr)\Bigl(1-\frac{1}{p}\Bigr)^3
=
\frac12\Mass^\flat(\Lambda)\cdot
\prod_{p\not\in S(\Lambda)}
\Bigl(1+\frac{3}{p}\Bigr)\Bigl(1-\frac{1}{p}\Bigr)^3,
\end{equation*}
as necessary.
\end{proof}

We are now ready to prove Theorem \ref{thD4quar}.

\medskip

\noindent {\it Proof of Theorem \ref{thD4quar}.} Let
$Y,X_{\sigma^2},X_\tau$ be positive real constants such that each of
them is $\geq (\log Z)^A$, where $Z=YX_{\sigma^2}X_\tau$ Denote the
number of pairs $(L,K)\in\FF_4(\Lambda(T))$ with
$\D^{S(\Lambda(T))}(L,K)\in[Y,(1+\kappa)Y]$,
$\q_{\sigma^2}^{S(\Lambda(T))}(L,K)\in[X_{\sigma^2},(1+\kappa)X_{\sigma^2}]$,
and $\q_\tau^{S(\Lambda(T))}(L,K)\in[X_\tau,(1+\kappa)X_\tau]$ by
$N_{D_4}^{(\kappa)}(\Lambda(T);Y,X_{\sigma^2},X_\tau)$. Let
$\chi_\kappa$ denote the characteristic function of the interval
$[1,1+\kappa]$, and for every positive integer $n$ choose smooth
functions $\phi_n^+$ and $\phi_n^-$, such that
$\phi_n^+(x)\geq\chi_\kappa(x)\geq\phi_n^-(x)$, and such that
$\Vol(\phi_n^+-\phi_n^-)\to 0$ as $n\to\infty$. When $Y\leq X_\tau$,
Theorem \ref{thMBcount}, applied to this sequence of pairs of smooth
functions implies that we have
\begin{equation*}
\begin{array}{rcl}
\displaystyle N_{D_4}^{(\kappa)}(\Lambda;Y,X_{\sigma^2},X_\tau)
&=&\displaystyle\cN_{\Lambda,\chi_\kappa,\kappa}(D_4;Y,X_{\sigma^2},X_\tau)
\\[.15in]&=&\displaystyle
\frac{\kappa^3}{2}\Mass^\flat(\Lambda)\prod_{p\not\in S(\Lambda)}
\Bigl(1+\frac{3}{p}\Bigr)
\Bigl(1-\frac{1}{p}  \Bigr)^3
\cdot Z+o(Z)+O\bigl(\Vol(\phi_n^+-\phi_n^-)\cdot Z\bigr).
\end{array}
\end{equation*}
Letting $n$ tend to infinity yields
\begin{equation}\label{eqmthfirst}
N_{\Lambda,\kappa}(D_4;Y,X_{\sigma^2},X_\tau)\sim
\frac{\kappa^3}{2}\Mass^\flat(\Lambda)\prod_{p\not\in S(\Lambda)}
\Bigl(1+\frac{3}{p}\Bigr)
\Bigl(1-\frac{1}{p}  \Bigr)^3
\cdot Z.
\end{equation}
Furthermore, we note that \eqref{eqmthfirst} also holds for
$Y>X_\tau$, since the bijection of the flip yields
$$N_{\Lambda,\kappa}(D_4;Y,X_{\sigma^2},X_\tau)=
N_{\Lambda,\kappa}(D_4;X_\tau,X_{\sigma^2},Y).$$

Let $\RR$ be a {\it non-skewed $\kappa$-adic cuboid}, i.e., $\RR$ is
of the form
\begin{equation*}
\bigl\{(x_1,x_2,x_3)\in\R^3: X_i\leq x_i<(1+\kappa)X_i\bigr\},
\end{equation*}
with $X_i\gg (\log (X_1X_2X_3))^A$. Then \eqref{eqmthfirst} asserts
that the number of quartic $D_4$-pairs $(L,K)\in\FF_4(\Lambda)$ whose
invariants satisfy
\begin{equation*}
(|\D^{S(\Lambda)}(L,K)|,\q_{\sigma^2}^{S(\Lambda)}(L,K),\q_\tau^{S(\Lambda)}(L,K))\in\RR
\end{equation*}
is asymptotic to
\begin{equation*}
\frac12 \Mass^\flat(\Lambda)\prod_{p\not\in S(\Lambda)}
\Bigl(1+\frac{3}{p}\Bigr)
\Bigl(1-\frac{1}{p}  \Bigr)^3
\cdot\Vol(\RR),
\end{equation*}
where $\Vol(\RR) = \kappa^3$.
The analogous claim when $\RR$ is the set of triples $(x,y,z)$ whose
product is bounded by $X$ follows by tiling $\RR$ with non-skewed
$\kappa$-adic cuboids and using Proposition \ref{propcusp} to bound
the number of $D_4$-fields with invariants lying in the skewed
regions. We omit the details since they are identical to the proof of
\cite[Theorem 6]{D4preprint}. The theorem then follows from the following
volume computation. We have
\begin{equation*}
\begin{array}{rcl}
\displaystyle\int_{\substack{(x,y,z)\in[1,X]^3\\xyz<X}}dxdydz
&=&\displaystyle X\int_{\substack{(y,z)\in[1,X]^2\\yz<X}}\frac{dydz}{yz}
\\[.2in]&=&\displaystyle
X(\log X)\int_{z=1}^X\frac{dz}{z}-X\int_{z=1}^X\frac{(\log z)dz}{z}
\\[.2in]&=&\displaystyle
\frac{1}{2}X\log^2 X,
\end{array}
\end{equation*}
as necessary. \hfill $\Box$

\section{Proof of the main results}

In this section, we prove Theorems \ref{main} and \ref{congruence
  conditions}. We begin with the following uniformity estimate on the
number of $D_4$-octic fields with large total ramification.
\begin{thm}\label{thunif}
Let $X>Z>0$ be real numbers. The number of octic $D_4$-fields $M$ such
that $\Delta(M)<X$ and the product of primes at which $M$ totally
ramifies is greater than $Z$ is bounded by $$O_\epsilon\left(\frac{X^{1/4}(\log
X)^2}{Z^{\delta-\epsilon}}\right)$$ for some $\delta > 0$.
\end{thm}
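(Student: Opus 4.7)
The plan is to partition octic $D_4$-fields $M$ with $\Delta(M) < X$ according to $T_0(M)$, the product of odd primes at which the associated pair $(L, K)$ is totally ramified (in the sense of Definition~\ref{localspec}), and apply the uniform bounds of Section~5. By Table~\ref{inertiatable}, for odd $p$ the pair $(L_p, K_p)$ is totally ramified precisely when $I_p = \langle \sigma \rangle$, in which case the $p$-part of $\Delta(M)$ equals $p^6$.

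Fix the acceptable collection $\Lambda$ with $\Lambda_v$ equal to all pairs at $v \in \{2, \infty\}$ and $\Lambda_p = \Lambda_p^{\rm ntr}$ at every odd $p$, so that $\Lambda$ has finite total ramification and $S(\Lambda) = \{2, \infty\}$. For each squarefree product $T$ of odd primes, let $\Lambda(T)$ denote the modification from~\S5.1 that replaces $\Lambda_p^{\rm ntr}$ by $\Lambda_p^{\rm tr}$ at $p \mid T$. By construction, $(L, K) \in \FF_4(\Lambda(T))$ iff $T_0(\widetilde{L}) = T$ exactly, and each octic $M$ corresponds to exactly two such pairs via the outer automorphism $\varphi$. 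For $(L, K) \in \FF_4(\Lambda(T))$, the note following Definition~\ref{localspec} gives $\q_\sigma^{S(\Lambda(T))}(L, K) = 1$, so Lemma~\ref{lem:Deltas} yields $\Delta^{S(\Lambda(T))}(M) = \H^{S(\Lambda(T))}(L, K)^4$. The $p$-parts for $p \in S(\Lambda(T)) \setminus \{2, \infty\}$ contribute precisely $T^6$ to $\Delta(M)$, and the $2$-contribution is bounded; hence $\Delta(M) < X$ forces $\H^{S(\Lambda(T))}(L, K) \ll X^{1/4}/T^{3/2}$.

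Summing the uniform estimate of Theorem~\ref{thMBcount} (with smoothing parameter $A$ chosen sufficiently large) over the $O((\log X)^2)$ dyadic ranges of $(Y, X_{\sigma^2}, X_\tau)$ with $YX_{\sigma^2}X_\tau \leq X^{1/4}/T^{3/2}$, and absorbing the cuspidal region via Proposition~\ref{propcusp}, yields
\[
\#\bigl\{(L, K) \in \FF_4(\Lambda(T)) : \Delta(\widetilde{L}) < X\bigr\} \ll_\epsilon \frac{X^{1/4}(\log X)^2}{T^{3/2 - \epsilon}} + \frac{X^{1/4}(\log X)^{2 - A/4}}{T^{1 + \delta_0 - \epsilon}},
\]
where $\delta_0 > 0$ is the subconvexity exponent from Lemma~\ref{lemcent}. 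Summing over squarefree odd $T > Z$ using the estimates $\sum_{T > Z} T^{-3/2 + \epsilon} \ll Z^{-1/2 + \epsilon}$ and $\sum_{T > Z} T^{-1 - \delta_0 + \epsilon} \ll Z^{-\delta_0 + \epsilon}$, then dividing by $2$ to pass from pairs to fields $M$, bounds the count in question by $\ll_\epsilon X^{1/4}(\log X)^2 Z^{-\delta + \epsilon}$ with $\delta := \min(1/2, \delta_0) > 0$, which is the claim.

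The principal technical content is the uniformity in $T$ of Theorem~\ref{thMBcount}, which itself rests on the subconvexity bounds for Hecke $L$-functions of Lemma~\ref{lemcent}; the remainder is careful bookkeeping of the $p^6$ discriminant contributions of totally ramified primes through the height formalism of Lemma~\ref{lem:Deltas}.
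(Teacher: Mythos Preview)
Your proof is correct and follows essentially the same route as the paper: you set up the same base collection $\Lambda$ with $S(\Lambda)=\{2,\infty\}$, fiber over the exact odd totally-ramified part $T$ via $\Lambda(T)$, convert the bound $\Delta(M)<X$ into $\H^{S(\Lambda(T))}(L,K)\ll X^{1/4}/T^{3/2}$ using the $p^6$ contribution from Table~1, handle the cusp with Proposition~\ref{propcusp} and the main body by summing Theorem~\ref{thMBcount} over dyadic boxes, and then sum over $T>Z$. The only cosmetic difference is that the paper imposes $\D^{S(\Lambda(T))}(L,K)<\q_\tau^{S(\Lambda(T))}(L,K)$ (so as to be in the regime $Y\leq X_\tau$ required by Theorem~\ref{thMBcount}) and thereby selects one of the two quartic pairs per octic, whereas you count both pairs and divide by~$2$; either device works, but you should note explicitly that the flip reduces the case $Y>X_\tau$ to $Y\leq X_\tau$ before invoking Theorem~\ref{thMBcount}.
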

\begin{proof}
Let $\Lambda$ denote the (acceptable with finite total ramification)
collection of quartic $D_4$ splitting conditions which prohibits total
ramification at all odd primes. Note that we have
$S(\Lambda)=\{2,\infty\}$. For an odd squarefree positive integer $T$,
the collection $\Lambda(T)$ imposes total ramification at all the
primes dividing $T$, and prohibits total ramification at all other odd
primes. If $M$ is an octic field satisfying the conditions of the
result, then $M$ has a quartic subfield $L$ with quadratic subfield
$K$ satisfying the following conditions:
\begin{itemize}
\item[{\rm (a)}] The pair $(L,K)$ belongs to $\FF_4(\Lambda(T))$ for
  some odd squarefree $T>Z$.
\item[{\rm (b)}] The $S(\Lambda(T))$-height of $(L,K)$ satisfies
  $H(L,K)\ll X^{1/4}/T^{3/2}$.
\item[{\rm (c)}] We have
  $\D^{S(\Lambda(T))}(L,K)<\q_\tau^{S(\Lambda(T))}(L,K)$.
\end{itemize}
From Proposition \ref{propcusp}, it follows that the number of
$D_4$-pairs satisfying the above three conditions, whose invariants lie
in the cuspidal region, is bounded by $O_{A,\epsilon}(X^{1/4}(\log
X)^{1+\epsilon}/T^{3/2-\epsilon})$. Applying Theorem \ref{thMBcount}
for invariants in the main body, and summing over dyadic ranges of
$Y$, $X_{\sigma^2}$, and $X_\tau$ implies that the total number of
$D_4$-pairs satisfying the above three conditions is bounded by
\begin{equation*}
  O_{A,\epsilon}\Bigl(\frac{X^{1/4}(\log X)^{2}}{T^{3/2-\epsilon}}+
  \frac{X^{1/4}}{(\log X)^{\frac{A}{4}-2}T^{1+\delta-\epsilon}}\Bigr).
\end{equation*}
Summing over $T>Z$ yields the result.
\end{proof}

Theorems \ref{thD4quar} and \ref{thunif} allow us to count
quartic-$D_4$ pairs in $\FF_4(\Lambda)$ for acceptable collections
(without the additional condition of finite total ramification.)  For
an acceptable collection $\Lambda$ of quartic local specifications,
let $N_\Delta(\Lambda;X)$ denote the number of pairs
$(L,K)\in\FF_4(\Lambda)$ such that $\Delta(M)<X$, where $M$ denotes
the Galois closure of $L$.
\begin{thm}\label{th:congQuartic}
Let $\Lambda$ be an acceptable collection of quartic local
specifications. Then
\begin{equation*}
N_\Delta(\Lambda;X)=\frac{1}{4}\Bigl(\sum_{(L,K)\in\Lambda_\infty}\frac{1}{\#\Aut(L,K)}\Bigr)
\prod_p\Bigl(\sum_{(L,K)\in\Lambda_p}\frac{|\Delta(M_{(L,K)})|_p^{\frac14}}{\#\Aut(L,K)}\Bigr)
X^{\frac14}\log^2 X^{\frac14}+o(X^{\frac14}\log^2 X),
\end{equation*}
where $M_{(L,K)}$ is the octic algebra corresponding to $(L,K)$.
\end{thm}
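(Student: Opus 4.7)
The plan is to reduce Theorem~\ref{th:congQuartic} to the height-based Theorem~\ref{thD4quar} via a decomposition of $\FF_4(\Lambda)$ by total ramification, using Theorem~\ref{thunif} to control the tail. For each squarefree positive integer $T$ built from primes $p \notin S(\Lambda)$ at which $\Lambda_p$ permits total ramification, I define $\Lambda^{(T)}$ to be the acceptable collection with finite total ramification given by $\Lambda^{(T)}_v = \Lambda_v$ for $v$ above $S(\Lambda)$, $\Lambda^{(T)}_p = \Lambda_p \cap \{\text{totally ramified pairs}\}$ for $p \mid T$, and $\Lambda^{(T)}_p = \Lambda_p^{\mathrm{ntr}}$ for all other $p \notin S(\Lambda)$. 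This yields a disjoint decomposition $\FF_4(\Lambda) = \bigsqcup_T \FF_4(\Lambda^{(T)})$ indexed by the totally ramified part of each pair.

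Fixing $T$, I further decompose by the local data $\lambda = (\lambda_v)_{v \in S(\Lambda)}$, restricting to collections $\Lambda^{(T)}_\lambda$ that specify a single pair at each place in $S(\Lambda)$. Lemma~\ref{lem:Deltas} applied with $S = S(\Lambda^{(T)})$ translates the condition $\Delta(M) \leq X$ into the height bound
\[
H^{S(\Lambda^{(T)})}(L,K) \,\leq\, X^{1/4}\, T^{-3/2}\prod_{p \in S(\Lambda),\,p\text{ finite}} |\Delta(M_{\lambda_p})|_p^{1/4},
\]
using that each totally ramified odd prime contributes $p^6$ to $\Delta(M)$. Applying Theorem~\ref{thD4quar} to $\Lambda^{(T)}_\lambda$ at this height bound produces a main term proportional to $X^{1/4} T^{-3/2} \log^2(X^{1/4})$, multiplied by the local mass factors from $\Mass^\flat(\Lambda^{(T)}_\lambda)$ and the product $\prod_{p \notin S(\Lambda^{(T)})}(1+3/p)(1-1/p)^3$.

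Summing over $\lambda$ and $T$ should factor cleanly into an Euler product. The sum over $\lambda$ at the places in $S(\Lambda)$ reassembles into $\prod_{v \in S(\Lambda)} \sum_{(L,K) \in \Lambda_v} |\Delta(M_{(L,K)})|_v^{1/4}/\#\Aut(L,K)$, while the sum over squarefree $T$ reassembles into $\prod_{p \notin S(\Lambda)} \bigl[(1+3/p)(1-1/p)^3 + m_p^{\mathrm{tr}}(1-1/p)^3 p^{-3/2}\bigr]$, where $m_p^{\mathrm{tr}}$ denotes the automorphism-weighted mass of totally ramified pairs in $\Lambda_p$. The key local identity at each odd $p \notin S(\Lambda)$ is that the Malle--Bhargava sum $\sum_{(L,K) \in \Lambda_p^{\mathrm{ntr}}} |\Delta(M_{(L,K)})|_p^{1/4}/\#\Aut(L,K)$ equals $(1+3/p)(1-1/p)^3$; this is a finite enumeration over the five tame conjugacy classes of inertia subgroups in $D_4$ with discriminant contributions from the table in Lemma~\ref{lem:Deltas}. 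Combined with the weight $p^{-3/2}$ from the totally ramified term, this reproduces the full Euler factor $\sum_{(L,K) \in \Lambda_p} |\Delta(M_{(L,K)})|_p^{1/4}/\#\Aut(L,K)$.

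To justify the interchange of summations, I truncate at $T \leq Z$ with $Z = Z(X) \to \infty$ slowly; Theorem~\ref{thunif} bounds the contribution from $T > Z$ by $O_\epsilon(X^{1/4}(\log X)^2 Z^{-\delta+\epsilon})$, which is $o(X^{1/4}\log^2 X)$. The main obstacle is making the sum over $T \leq Z$ uniform: the error terms coming from Theorems~\ref{thD4quar} and~\ref{thMBcount} must remain summable after rescaling by $T^{-3/2}$, which is exactly the $T^\epsilon$ and $T^{1/2-\delta+\epsilon}$ uniformity tracked through Proposition~\ref{propcusp} and the associated main-body estimate. With these uniformity statements in hand, truncation followed by letting $Z \to \infty$ yields the stated asymptotic.
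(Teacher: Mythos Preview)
Your proposal follows essentially the same route as the paper: decompose $\FF_4(\Lambda)$ according to the set of totally ramified primes, apply Theorem~\ref{thD4quar} to each piece $\Lambda^{(T)}$ after converting the discriminant bound into a height bound via Lemma~\ref{lem:Deltas}, truncate the sum over $T$ using Theorem~\ref{thunif}, and reassemble into an Euler product. The paper performs the reduction to a single local type at each $p\in S(\Lambda)$ by first passing to ``discriminant stable'' subcollections, which is exactly your further decomposition by $\lambda$.

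One slip: the local identity you state is off by the normalization factor. The bare Malle--Bhargava sum over non-totally-ramified pairs at an odd prime $p$ is
\[
\sum_{(L,K)\in\Lambda_p^{\mathrm{ntr}}}\frac{|\Delta(M_{(L,K)})|_p^{1/4}}{\#\Aut(L,K)} \;=\; 1+\frac{3}{p},
\]
not $(1+3/p)(1-1/p)^3$. The factor $(1-1/p)^3$ comes separately from the $\Mass^\flat$ and Euler product in Theorem~\ref{thD4quar}. With that correction your reassembly step is fine: one obtains $\prod_{p\notin S(\Lambda)}(1-1/p)^3\bigl[(1+3/p)+m_p^{\mathrm{tr}}p^{-3/2}\bigr]$, and the bracket is exactly $\sum_{(L,K)\in\Lambda_p}|\Delta(M_{(L,K)})|_p^{1/4}/\#\Aut(L,K)$.

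Also, the uniformity-in-$T$ you worry about is not actually needed: for fixed $Z$ the set of admissible $T$ built from primes $\leq Z$ is finite, so the $o(\cdot)$ errors from Theorem~\ref{thD4quar} sum to an $o_Z(\cdot)$ term, and one lets $Z\to\infty$ after $X\to\infty$. This is how the paper proceeds as well.
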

\begin{proof}
Since $\Lambda$ is acceptable, it follows that there exists a finite
set $S_\Lambda$ of primes, containing $2$, such that if $p\not\in
S_\Lambda$, then $\Lambda_p$ contains all non-totally ramified pairs.
We assume that $\Lambda$ is a {\it discriminant stable} collection,
i.e., the set $\Lambda_p$ consists of elements whose octic
discriminants have the same valuation $\lambda_p$ for every finite
place $p\in S_\Lambda$. By additivity, the general result clearly
follows from the result on discriminant stable collections, and so
there is no loss of generality in this assumption.  For a positive
integer $T$ not divisible by any primes in $S_\Lambda$, we construct a
collection $\Lambda(T)=(\Lambda(T)_v)_v$ of quartic $D_4$ local
specifications as follows. For $v\in S_\Sigma$ and $p(v) \nmid T$
$\Lambda(T)_v=\Lambda_v$ for all infinite places and for $p \in S_\Sigma$ ; for $p\mid T$, define $\Lambda(T)_p$ to be
the set of all totally ramified extensions; for $p\not\in S_\Sigma$
and $p\nmid T$, define $\Lambda(T)_p$ to be the set of all extensions
which are not totally ramified. Clearly $\Lambda(T)$ is acceptable and
has finite total ramification.

Let $H_T$ denote the $S(\Lambda(T))$-height on $\FF_4(\Lambda(T))$.
Consider a pair $(L,K)\in\FF_4(\Lambda(T))$ and denote the Galois
closure of $L$ over $\bQ$ by $M$. Then we have
\begin{equation*}
|\Delta(M)|=\Bigl(\prod_{p\in S_\Lambda} \lambda_p\Bigr)\cdot T^6\cdot H_T(L,K)^4.
\end{equation*}
Therefore, from Theorem \ref{thD4quar}, we have
\begin{equation*}
\begin{array}{rcl}
\displaystyle N_\Delta(\Lambda(T);X)&=&
\displaystyle N_{D_4}\Bigl(\Lambda;\Bigl(\prod_{p\in
S_\Lambda} \lambda_p^{-1/4} \Bigr)T^{-3/2}X^{1/4}\Bigr)
\\[.2in]&\sim &\displaystyle
\frac14\Mass^\flat(\Lambda(T))\Bigl(\prod_{\substack{p\not\in S_\Lambda\\p\nmid T}}
\Bigl(1+\frac{3}{p}\Bigr)\Bigl(1-\frac{1}{p}\Bigr)^3\Bigr)\cdot
\Bigl(\prod_{p\in S_\Lambda}\lambda_p^{-1/4}\Bigr)T^{-3/2}X^{1/4}\log^2 X^{1/4}.
\end{array}
\end{equation*}
We simplify the above by expanding $\Mass^\flat$ and writing
\begin{equation*}
\Mass^\flat(\Lambda(T))\Bigl(\prod_{p\in S_\Lambda}\lambda_p^{-1/4}\Bigr)T^{-3/2}
=\Bigl(\sum_{(L,K)\in\Lambda_\infty}\frac{1}{\#\Aut(L,K)}\Bigr)
\prod_{p\in S(\Lambda(T))}\Bigl(\sum_{(L,K)\in\Lambda_p}
\frac{|\Delta(M_{(L,K)})|_p^{\frac14}}{\#\Aut(L,K)}\Bigr).
\end{equation*}
The above equality is true since we have
$|\Delta(M_{(L,K)})|_p=\lambda_p^{-1}$ for $p\in S_\Lambda$ and
$(L,K)\in\Lambda_p$, and since for any odd prime $p$, the sum of
$|\Delta(M_{(L,K)})|_p^{\frac14}/\#\Aut(L,K)$ over all totally
ramified pairs is $p^{-3/2}$.

Let $Z>0$ be a real number, and define $\cT(Z)$ to be the set of all
squarefree integers which are the product of primes $p\leq Z$ with
$p\not\in S_\Lambda$. From Theorem \ref{thunif}, we have
\begin{equation*}
N_\Delta(\Lambda;X)=\sum_{T\in\cT(Z)}N_\Delta(\Lambda(T);X)+O_\epsilon
\Bigl(\frac{X^{1/4}\log^2 X}{Z^{\delta-\epsilon}}\Bigr).
\end{equation*}
Since we have
\begin{equation*}
\sum_{(L,K)}\frac{|\Delta(M_{(L,K)})|_p^{\frac14}}{\#\Aut(L,K)}=
\Bigl(1+\frac{3}{p}\Bigr),
\end{equation*}
where the sum is over all non-totally ramified pairs $(L,K)$ over
$\Q_p$, we sum over $T\in\cT(Z)$ and obtain
\begin{equation*}
\begin{array}{rcl}
\displaystyle \frac{N_\Delta(\Lambda;X)}{X^{1/4}\log^2 X^{1/4}}=
\displaystyle\Bigl(\sum_{(L,K)\in\Lambda_\infty}
\frac{1}{\#\Aut(L,K)}\Bigr)
\prod_{p\leq Z}\Bigl(\sum_{(L,K)\in\Lambda_p}
\frac{|\Delta(M_{(L,K)})|_p^{\frac14}}{\#\Aut(L,K)}\Bigr)
\prod_{p>Z}\Bigl(1+\frac{3}{p}\Bigr)\Bigl(1-\frac{1}{p}\Bigr)^3
+O_\epsilon(Z^{-\delta+\epsilon}).
\end{array}
\end{equation*}
Letting $Z$ tend to infinity yields the result.
\end{proof}

Let $v$ be any place of $\Q$, and 
let $\Sigma_v^\all$ denote the set of isomorphism classes of octic
algebras of $D_4$-type over $\Q_v$. Since a $D_4$-type uniquely determines an \'etale octic extension of $\Q_v$, the set $\Sigma_v^\all$ can be canonically identified with the set of $D_4$-types over $\Q_v$. Our next result describes the relation between the set of $D_4$ types over $\Q_v$ and $\Lambda_v^\all$. 
\begin{proposition}\label{prop:LambtoSig}
There is a natural bijection between the set of $D_4$-types over $\Q_v$ and $\Lambda_v^\all$. Moreover, if the $D_4$-type $\rho$ corresponds to $(L,K)\in\Lambda_v^\all$ under the above bijection, then the groups $\Aut([\rho])$ and $\Aut(L,K)$ are isomorphic.
\end{proposition}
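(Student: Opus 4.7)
The plan is to exhibit mutually inverse maps between the set of $D_4$-types over $\Q_v$ and $\Lambda_v^\all$ by means of induced algebras, and then to identify the two automorphism groups via a common centralizer computation inside $D_4$.

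First I will construct the forward map. Given a $D_4$-type $[\rho_v]$, let $M_v := \Ind_{\im(\rho_v)}^{D_4} \overline{\bQ}_v^{\ker(\rho_v)}$ be the associated octic \'etale $\Q_v$-algebra, equipped with its canonical $D_4$-action. Set
\begin{equation*}
L_v := M_v^{\langle \tau \rangle}, \qquad K_v := M_v^{\langle \sigma^2, \tau \rangle}.
\end{equation*}
Since $[D_4 : \langle \tau \rangle] = 4$ and $[D_4 : \langle \sigma^2, \tau \rangle] = 2$, these are quartic and quadratic \'etale $\Q_v$-subalgebras of $M_v$, and the containment $\langle \tau \rangle \subset \langle \sigma^2, \tau \rangle$ yields $K_v \subseteq L_v$, so that $L_v$ is a quadratic \'etale extension of $K_v$. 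Conjugation of $\rho_v$ by any $g \in D_4$ induces an isomorphism of $D_4$-algebras carrying the two fixed subalgebras to one another, so the pair $(L_v, K_v)$ depends only on the $D_4$-conjugacy class $[\rho_v]$ and lies in $\Lambda_v^\all$.

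For the inverse, given $(L_v, K_v) \in \Lambda_v^\all$, let $E$ denote the set of four $\Q_v$-algebra embeddings of $L_v$ into $\overline{\Q}_v$. The restriction-to-$K_v$ map partitions $E$ into two blocks of two. The action of $\Gal(\overline{\Q}_v/\Q_v)$ on $E$ preserves this block partition, yielding a continuous homomorphism into the block-preserving subgroup of $\Sym(E)$, which is canonically $C_2 \wr C_2 \cong D_4$. Different choices of labeling of $E$ by a fixed set of four elements split into two blocks produce $D_4$-conjugate homomorphisms, so one obtains a well-defined $D_4$-type $[\rho_v]$. A direct check, using the identification $L_v \otimes_{\Q_v} \overline{\Q}_v \cong \prod_{e \in E} \overline{\Q}_v$ compatibly with the $D_4$-action, verifies that the two constructions are mutually inverse.

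Finally, I will identify the automorphism groups. Under the inverse construction above, $\Aut_{\Q_v}(L_v)$ is canonically identified with the centralizer in $\Sym(E) \cong S_4$ of the image of $\Gal(\overline{\Q}_v / \Q_v)$. A $\Q_v$-algebra automorphism of $L_v$ preserves $K_v$ as a subalgebra if and only if the corresponding permutation of $E$ preserves the block partition, i.e., lies in the block-preserving subgroup $D_4 \subset S_4$. Therefore $\Aut(L_v, K_v)$ coincides with the centralizer of $\im(\rho_v)$ inside $D_4$, which by definition is $\Aut_{D_4}([\rho_v])$. The main subtlety is this last identification: one must carefully track the embedding $D_4 \hookrightarrow S_4$ coming from the wreath-product description and verify the standard dictionary between \'etale algebras with a fixed subalgebra structure and permutation representations of the absolute Galois group landing in $D_4$.
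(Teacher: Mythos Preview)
Your proof is correct and follows essentially the same approach as the paper: both arguments identify $D_4$-types with permutation representations of $G_{\Q_v}$ landing in $D_4\subset S_4$ via the block structure on embeddings of $L_v$ induced by $K_v$, and both identify $\Aut(L_v,K_v)$ with the centralizer of $\im(\rho_v)$ in $D_4$. The only cosmetic difference is that you pass through the induced octic algebra $M_v$ and take fixed subalgebras for $\langle\tau\rangle$ and $\langle\sigma^2,\tau\rangle$, whereas the paper works directly at the quartic level and recovers $K_v$ as the fixed subalgebra of the involution coming from the central element $\sigma^2$.
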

\begin{proof}
Consider the set $\mathcal{M}(\Q_v,S_4)$ of conjugacy classes of continuous homomorphisms $G_{\Q_v}\to S_4$, up to conjugacy by elements in $S_4$. Following the treatment in \cite[Lemma 3.1]{MR2354797}, we describe a bijection between $\mathcal{M}(\Q_v,S_4)$ and \'etale quartic algebras of $\Q_v$: given an \'etale quartic algebra $L$ over $\Q_v$, we see that $L\otimes\overline{\Q}_v\cong\overline{\Q}_v^4$ contains $4$ minimal idempotents. Then the action of $G_{\Q_v}$ on $L\otimes\overline{\Q}_v$, given by the trivial action on the first factor and the usual action on the second, fixes this set of minimal idempotents, yielding a continuous homomorphism $G_{\Q_v}\to S_4$. Conversely, 
a continuous homomorphism $G_{\Q_v}\to S_4$ gives an action of $G_{\Q_v}$ on $\overline{\Q}_v^4$. The set of invariants for this action forms a $\Q_v$-subalgebra which is the required \'etale quartic extension of $\Q_v$. Moreover, as explained in the proof of \cite[Lemma 3.2]{MR2354797}, if $L$ is an \'etale quartic algebra over $\Q_v$ corresponding to the representation $\rho:G_{\Q_v}\to S_4$, then the group $\Aut(L)$ is isomorphic with the group of $G_{\Q_v}$-equivariant permutations of the minimal idempotents in $L\otimes\overline{\Q}_v$, which in turn can be identified with the centralizer of $\rm{Im}(\rho)$ in $S_4$.

Now suppose that $\rho$ is a $D_4$-type, i.e., $\rho:G_{\Q_v}\to D_4$ is a continuous homomorphism. We think of $D_4$ as acting on the vertices of a square, and thus fix an embedding $D_4\to S_4$. From the above construction, we obtain an \'etale quartic extension $L$ of $\Q_v$ corresponding to $\rho':G_{\Q_v}\to D_4\to S_4$. Being in the center of $D_4$, the element $\sigma^2$ yields a nontrivial involution $c\in\Aut(L)$. Let $K$ denote the set of elements fixed by $c$, and note that $K$ forms an \'etale quadratic extension of $\Q_v$. Thus the pair $(L,K)$ belongs to $\Lambda_v^\all$. Conversely, given a pair $(L,K)\in\Lambda_v^\all$, let $\rho':G_{\Q_v}\to S_4$ be the continuous homomorphism corresponding to $L$. The involution $c$ of $L$ fixing $K$ acts as a double transposition on the set of four minimal idempotents in $L\otimes\overline{\Q}_v$, which implies that the image of $\rho'$ is contained within the subgroup $D_4\subset S_4$ of permutations of these idempotents which commute with $c$. Hence the pair $(L,K)$ yields a $D_4$-type $\rho:G_{\Q_v}\to D_4$. It is easy to check that these two constructions are inverses of each other, thus yielding the required bijection between the set of $D_4$-types of $\Q_v$ and $\Lambda_v^\all$.

Finally, keeping the above notation, note that $\Aut([\rho])$, which by definition is the centralizer of the image of $\rho$ in $D_4$, is therefore equal to the subgroup of the centralizer of the image of $\rho'$ in $S_4$ consisting of elements which commute with the double transposition corresponding to $c\in\Aut(L)$. Since $c$ generates $\Aut(L/K)$, we see that $\Aut([\rho])$ is isomorphic to the subgroup of authomorphisms of $L$ which fix $K$, as necessary.
\end{proof}

Theorem \ref{congruence conditions} follows immediately from Theorem \ref{th:congQuartic} and Proposition \ref{prop:LambtoSig}. We are now ready to prove Theorem \ref{main}.

\medskip

\noindent {\bf Proof of Theorem \ref{main}:} In light of Theorem \ref{congruence conditions}, all that remains is to compute the local factors
\begin{equation*}
\sum_{\rho_\infty\in\Sigma_\infty^{(i)}}\frac{1}{\Aut(\rho_\infty)},\quad\quad
\sum_{\rho_p\in\Sigma_p^\all}\frac{|\Delta(M_p)|^\frac14_p}{\Aut(\rho_p)},
\end{equation*}
for $i=r$ and $i=c$, and for primes $p$, where $M_p$ is the \'etale octic $\Q_p$-extension corresponding to $\rho$. It is easy to check that when $v=\infty$, the sum is $1/4$ in the totally real case and $1/2$ in the complex case. The computation of the local factors for finite primes follow from the following tables; the first, Table \ref{tab:p}, is for primes $p\neq 2$, while the second, Table \ref{tab:2} is for $p=2$.

\begin{table}
\centering
\begin{tabular}{|c|c|c|c|c|c|c|}
\hline
$\#$ of Alg & $\sigma(L,K)$ & $\Gal(L)$ &  $\Aut(L,K)$ &
$\#\Aut(L,K)$ & $\Delta(M)$ \\
\hline
$1$ & $((11),(1111))$ & -   &  $D_4$  & $8$ & $1$ \\
$1$ & $((11),(112))$  & -   &  $V_4$  & $4$ & $1$ \\
$1$ & $((11),(22))$   & -   &  $D_4$  & $8$ & $1$ \\
$1$ & $((2),(22))$    & -   &  $V_4$  & $4$ & $1$ \\
$1$ & $((2),(4))$     & $C_4$ &  $C_4$  & $4$ & $1$ \\
\hline
$2$ & $((11),(1^211))$   & -   & $V_4$  & $4$ & $p^4$\\
$2$ & $((11),(1^22))$    & -   & $V_4$  & $4$ & $p^4$ \\
$2$ & $((1^2),(1^21^2))$ & -   & $V_4$  & $4$ & $p^4$ \\
$2$ & $((1^2),(2^2))$    & $V_4$   & $V_4$  & $4$ & $p^4$ \\
\hline
$2$ & $((11),(1^21^2))$  & -   & $D_4$  & $8$ & $p^4$\\
$1$ & $((11),(1^21^{2'}))$  & -   & $V_4$  & $4$ & $p^4$\\
$1$ & $((2),(2^2))$      & $V_4$   &  $V_4$  & $4$ & $p^4$ \\
$1$ & $((2),(2^2))$      & $C_4$   &  $C_4$  & $4$ & $p^4$\\
$(0,2)$ & $((1^2),(1^4))$    & $D_4$   &  $C_2$  & $2$ & $p^6$ \\
$(4,0)$ & $((1^2),(1^4))$    & $C_4$   &  $C_4$  & $4$ & $p^6$ \\
\hline
\end{tabular}
\caption{Local invariants for quartic $D_4$ extensions of $\Q_p$. When we write $(a,b)$ in the first column, the number is $a$ for primes
of the form $4k+1$ and $b$ for primes of the form $4k+3$. Also, $(1^21^{2'})$ indicates that the two ramified algebras (whose direct sum make up $K$) are different while $(1^21^2)$ indicates that the two ramified algebras (whose direct sum make up $K$) are the same. 
}\label{tab:p}
\end{table}

The mass formula for
octic discriminants at odd primes $p$
is:
\begin{equation*}
\Bigl(1+\frac{3}{p}+\frac{1}{p^{3/2}}\Bigr)\Bigl(1-\frac{1}{p}\Bigr)^3.
\end{equation*}
\begin{table}
\centering
\begin{tabular}{|c|c|c|c|c|c|c|}
\hline
$\#$ of Alg & $\sigma(L,K)$ & $\Gal(L)$ &  $\Aut(L,K)$ &
$\#\Aut(L,K)$ & $\Delta(M)$ \\
\hline
$1$ & $((11),(1111))$ & -   &  $D_4$  & $8$ & $1$ \\
$1$ & $((11),(112))$  & -   &  $V_4$  & $4$ & $1$ \\
$1$ & $((11),(22))$   & -   &  $D_4$  & $8$ & $1$ \\
$1$ & $((2),(22))$    & -   &  $V_4$  & $4$ & $1$ \\
$1$ & $((2),(4))$     & $C_4$ &  $C_4$  & $4$ & $1$  \\
\hline
$2$ & $((11),(1^211))$   & -   & $V_4$  & $4$ & $p^8$ \\
$4$ & $((11),(1^211))$   & -   & $V_4$  & $4$ & $p^{12}$ \\

$2$ & $((11),(1^22))$    & -   & $V_4$  & $4$ & $p^8$ \\
$4$ & $((11),(1^22))$    & -   & $V_4$  & $4$ & $p^{12}$ \\

$2$ & $((1^2),(1^21^2))$ & -   & $V_4$  & $4$ & $p^8$ \\
$4$ & $((1^2),(1^21^2))$ & -   & $V_4$  & $4$ & $p^{12}$ \\

$2$ & $((1^2),(2^2))$    & $V_4$   & $V_4$  & $4$ & $p^8$ \\
$4$ & $((1^2),(2^2))$    & $V_4$   & $V_4$  & $4$ & $p^{12}$ \\
\hline\hline
$2$ & $((11),(1_2^21_2^2))$  & -   & $D_4$  & $8$ & $p^8$ \\
$4$ & $((11),(1_3^21_3^2))$  & -   & $D_4$  & $8$ & $p^{12}$ \\

$1$ & $((11),(1_2^21_2^{2'}))$  & -   & $V_4$  & $4$ & $p^8$ \\
$2$ & $((11),(1_3^21_3^{2'}))$  & -   & $V_4$  & $4$ & $p^{12}$\\
$4$ & $((11),(1_3^21_3^{2'}))$  & -   & $V_4$  & $4$ & $p^{16}$\\

$8$ & $((11),(1_2^21_3^2))$  & -   & $V_4$  & $4$ & $p^{16}$\\
\hline
$1$ & $((2),(2^2))$      & $V_4$   &  $V_4$  & $4$ & $p^8$\\
$1$ & $((2),(2^2))$      & $C_4$   &  $C_4$  & $4$ & $p^8$\\
$2$ & $((2),(2^2))$      & $D_4$   &  $C_2$  & $2$ & $p^8$\\
$2$ & $((2),(2^2))$      & $V_4$   &  $V_4$  & $4$ & $p^{12}$\\
$2$ & $((2),(2^2))$      & $C_4$   &  $C_4$  & $4$ & $p^{12}$ \\
$2$ & $((2),(2^2))$      & $D_4$   &  $C_2$  & $2$ & $p^{12}$ \\
\hline
$4$ & $((1_2^2),(1^4))$    & $V_4$   &  $V_4$  & $4$ & $p^{16}$\\
$8$ & $((1_3^2),(1^4))$    & $V_4$   &  $V_4$  & $4$ & $p^{16}$\\
$8$ & $((1_3^2),(1^4))$    & $C_4$   &  $C_4$  & $4$ & $p^{22}$\\

$2$ & $((1_2^2),(1^4))$    & $D_4$   &  $C_2$  & $2$ & $p^{12}$\\
$2$ & $((1_2^2),(1^4))$    & $D_4$   &  $C_2$  & $2$ & $p^{16}$\\
$8$ & $((1_2^2),(1^4))$    & $D_4$   &  $C_2$  & $2$ & $p^{22}$\\
$8$ & $((1_3^2),(1^4))$    & $D_4$   &  $C_2$  & $2$ & $p^{22}$\\

$4$ & $((1_3^2),(1^4))$    & $D_4$   &  $C_2$  & $2$ & $p^{22}$\\
$8$ & $((1_3^2),(1^4))$    & $D_4$   &  $C_2$  & $2$ & $p^{24}$\\
\hline
\end{tabular}
\caption{Local invariants for quartic $D_4$ extensions of $\Q_2$. 
Above, $1^2_2$ refers to the case when the ramified quadratic subalgebra $K/\bQ_2$ has discriminant $2^2$. Likewise, $1^2_3$ refers to the case when the ramified quadratic subalgebra $K/\bQ_2$ has discriminant $2^3$. Also, 
$(1^2_3 1^{2'}_3)$ and $(1^2_2 1^{2'}_2)$ 
indicates that the two ramified algebras (whose direct sum make up $K$) are different while $(1_3^21_3^2)$ and $(1_2^21_2^2)$ indicates that the two ramified algebras (whose direct sum make up $K$) are the same.
}\label{tab:2}
\end{table}
The mass formula for octic discriminants at $p = 2$ is:
$$3 + \frac{1}{2} + \frac{3}{2^{7/2}}.$$

\bibliographystyle{abbrv}
\bibliography{references}

\bigskip{\footnotesize
    \texttt{ashankar@math.toronto.edu and ila@math.toronto.edu}\par \textsc{Department of
      Mathematics, University of Toronto, Toronto, ON, M5S 2E4,
      Canada}
}

\end{document}